\numberwithin{equation}{section}
\theoremstyle{plain}
\newtheorem{theorem}[subsection]{Theorem}
\newtheorem{proposition}[subsection]{Proposition}
\newtheorem*{prop-no-number}{Proposition}
\newtheorem*{prop-}{Proposition}
\newtheorem{lemma}[subsection]{Lemma}
\newtheorem{corollary}[subsection]{Corollary}
\theoremstyle{definition}
\newtheorem{defn}[subsection]{Definition}
\newtheorem{question}[subsection]{Question}
\newtheorem{remark}[subsection]{Remark}
\renewcommand{\leq}{\leqslant}
\renewcommand{\geq}{\geqslant}
\renewcommand{\subset}{\subseteq}
\renewcommand{\supset}{\supseteq}
\def\E{\mathbb{E}}
\def\Z{\mathbb{Z}}
\def\C{\mathbb{C}}
\def\N{\mathbb{N}}
\def\F{\mathbb{F}}
\def\Ghat{\widehat{G}}
\def\fhat{\widehat{f}}
\DeclareMathOperator{\Bohr}{Bohr}
\DeclareMathOperator{\Spec}{Spec}
\newcommand{\Zmod}[1]{\Z_{#1}} % \Z/#1\Z or \Z_{#1} -- this is so that one can easily change from Z_p to Z/pZ depending on the journal's style
\providecommand{\abs}[1]{\lvert#1\rvert}
\providecommand{\norm}[1]{\lVert #1 \rVert}
\providecommand{\ceiling}[1]{\lceil#1\rceil}
\theoremstyle{plain}
\newtheorem*{rep@theorem}{\rep@title}
\newcommand{\newreptheorem}[2]{%
\newenvironment{rep#1}[1]{%
\def\rep@title{#2 \ref*{##1}}%
\begin{rep@theorem}}%
{\end{rep@theorem}}}
\title[Sparse sets]{Roth's theorem for four variables and additive structures in sums of sparse sets}
\author{Tomasz Schoen}
\address{Faculty of Mathematics and Computer Science\\
Adam Mickiewicz University\\
Umultowska 87, 61-614 Pozna\'n, Poland}
\email{schoen@amu.edu.pl}
\author{Olof Sisask}
\address{Department of Mathematics\\
     KTH\\
     100 44 Stockholm\\
     Sweden}
\email{sisask@kth.se}
\subjclass[2010]{11B30; 11B25}
\begin{document}

\begin{abstract}
We show that if $A \subset \{1,\ldots,N\}$ does not contain any solutions to the equation $x+y+z=3w$ with the variables not all equal, then
	\[ \abs{A} \leq \frac{N}{ \exp\left(c (\log N)^{1/7}\right) }, \]
	where $c > 0$ is some absolute constant. In view of Behrend's construction, this bound is of the right shape: the exponent $1/7$ cannot be replaced by any constant larger than $1/2$.

	We also establish a related result, which says that sumsets $A+A+A$ contain long arithmetic progressions if $A \subset \{1,\ldots,N\}$, or high-dimensional subspaces if $A \subset \F_q^n$, even if $A$ has density of the shape above.
\end{abstract}

\maketitle

\parindent 0mm
\parskip   4mm

\section{Introduction}

This paper is concerned with two types of problems in additive combinatorics, namely solving linear equations in subsets of abelian groups and finding additive structures in sumsets, with a focus on being able to deal with relatively sparse sets. We discuss these in turn, focusing on the historically most important case of sets of integers.

\subsection*{Roth-type results}

Roth's famous theorem on arithmetic progressions says that if a set $A \subset [N] := \{1,\ldots,N\}$ does not contain any non-trivial three-term arithmetic progressions, that is solutions to the equation $x+z=2y$ with $x,y,z$ not equal, then $\abs{A} = o(N)$. In fact:

\begin{theorem}[Roth's theorem \cite{roth}]Let $r_3(N)$ denote the largest size of a subset of $[N]$ with no non-trivial three-term progressions. Then, for $N$ large enough,
	\[ r_3(N) = O\left(\frac{N}{\log\log N}\right). \]
\end{theorem}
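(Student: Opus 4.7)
The plan is to carry out Roth's original density-increment argument via Fourier analysis on a cyclic group. First I embed $A \subset [N]$ into $G = \Z/p\Z$ for a prime $p$ with $2N < p < 3N$, chosen so that non-trivial three-term progressions of $A \subset G$ coincide with non-trivial progressions of $A \subset [N]$. Writing $f = 1_A$ and $\alpha = |A|/p$, the number of 3-APs in $A$ can be expressed via Plancherel as, up to normalisation, $p^2 \sum_{r \in \Ghat}\fhat(r)^2\fhat(-2r)$, with the $r=0$ term giving the random count $\alpha^3 p^2$ and the remaining terms encoding the non-randomness of $A$.

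The second step is the Fourier dichotomy. If $A$ contains no non-trivial 3-APs, the count is at most $|A|$, which is far smaller than $\alpha^3 p^2$ provided $\alpha \gg p^{-1/2}$. Bounding the $r\ne 0$ tail by $\sup_{r\neq 0}|\fhat(r)|\cdot\sum_r|\fhat(r)|^2$ and using Parseval $\sum_r|\fhat(r)|^2 = \alpha$ yields the existence of some $r\neq 0$ with $|\fhat(r)| \geq c\alpha^2$.

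The third step, the density increment, is the main technical hurdle. By Dirichlet's approximation theorem I would partition $\Z/p\Z$ into arithmetic progressions of length $M \gtrsim \alpha\sqrt p$ with some common difference $q$, along which $x \mapsto e^{2\pi i rx/p}$ is nearly constant. The correlation $\sum_x(f(x)-\alpha)e^{-2\pi i rx/p}$ of magnitude $\geq c\alpha^2 p$ then splits into contributions from the $\sim p/M$ cells, and pigeonholing (after pulling back via the dilation $x \mapsto qx$, which is legitimate because the AP avoids wrap-around once $qM \lesssim p$) produces a genuine arithmetic progression $P \subset [N]$ of length $N' \gtrsim \alpha\sqrt N$ on which $A$ has density at least $\alpha(1 + c\alpha)$.

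Finally I iterate. Rescaling $P$ to an interval $[N']$ and applying the argument afresh, the density grows from $\alpha$ to $\alpha(1+c\alpha)$; after $O(1/\alpha)$ iterations it would formally exceed $1$, forcing termination. The length obeys a recursion of the form $N_{i+1}\gtrsim \alpha^2 \sqrt{N_i}$, and so $\log N_k \geq 2^{-k}\log N - O(\log(1/\alpha))$. For the iteration to survive $O(1/\alpha)$ steps one needs $\log N \gtrsim 2^{O(1/\alpha)}\log(1/\alpha)$, i.e.\ $\alpha \lesssim 1/\log\log N$, which is the claimed bound. The principal obstacle throughout is step three: extracting a \emph{polynomially long} AP carrying a density increment from a single large Fourier coefficient. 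The double-logarithmic bound arises precisely from balancing the quadratic density gain $c\alpha^2$ per step against the square-root loss in length; were one to settle for AP length only polylogarithmic in $N$, the bound would degrade correspondingly.
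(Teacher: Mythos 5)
The paper does not prove this statement; Roth's theorem is quoted as background, with the proof deferred entirely to the citation \cite{roth}. Your reconstruction of Roth's original Fourier-analytic density-increment argument --- pass to $\Z/p\Z$, detect a nontrivial large Fourier coefficient of size $\gtrsim \alpha^2$ from the deficit in the 3-AP count, convert it via Dirichlet's theorem and the mean-zero trick into a density increment $\alpha \mapsto \alpha(1+c\alpha)$ on a sub-progression of length $\gtrsim \operatorname{poly}(\alpha)\sqrt N$, and iterate, with the halving of $\log N$ at each of the $O(1/\alpha)$ steps producing the $\log\log N$ bound --- is the standard argument and is essentially correct. The only details left implicit are routine: ensuring that the selected cell is a genuine progression in $[N]$ (no wrap-around), and invoking $\sum_x (f(x)-\alpha)=0$ to turn the large absolute correlation over cells into a large positive one before pigeonholing; neither affects the shape of the bound.
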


This theorem has been central to additive combinatorics, and improving the above bound has been the object of much research and has led to a wealth of interesting techniques being developed; see for example \cite{szemeredi, heath-brown, bourgain:roth, bourgain:roth2, sanders:roth2, sanders:roth, bloom:roth}, to which we also refer for more history on the problem. It is not yet known, however, whether $r_3(N) = O(N/\log N)$; the current best upper bounds, due to Sanders \cite{sanders:roth} and Bloom \cite{bloom:roth}, give 
\[ r_3(N) \leq \frac{N}{(\log N)^{1-o(1)} }. \]

By contrast, the best lower bound on $r_3(N)$, coming from constructions of large subsets of $[N]$ with no non-trivial progressions, is of the form
	\[ r_3(N) \geq \frac{N}{\exp\big(O(\log N)^{1/2}\big)} \]
and is due to Behrend \cite{behrend} (but see also \cite{elkin, green-wolf}).

Now, most proofs of Roth's theorem easily extend to provide similar upper bounds for any translation invariant equation
\begin{equation}\label{eqn:gen_eqn}
c_1 x_1 + \cdots + c_k x_k = 0 \quad \text{where $k \geq 3$, $c_j \in \Z\setminus\{0\}$, and $c_1+\cdots+c_k = 0$}\footnote{This is the translation invariance property.},
\end{equation}
and Behrend's argument extends directly to any such equation with one negative coefficient and the rest positive, that is of the form $a_1 x_1 + \cdots + a_l x_l = b y$ with the $a_j$ positive integers summing to $b$. Furthermore, a somewhat folklore philosophy was that whatever techniques worked for additive combinatorial problems involving three variables would also work for those involving four or more, and vice versa, with the bounds being similar. The work \cite{sanders:bogolyubov} of Sanders led to this being questioned in the context of sumsets, however, and the first-named author and Shkredov \cite{schoen-shkredov} subsequently showed that much stronger bounds than those given for $r_3(N)$ above hold for equations in six or more variables. A representative example:

\begin{theorem}[\cite{schoen-shkredov}]
	Suppose $A \subset [N]$ does not contain any solutions to $x_1+\cdots+x_5 = 5y$ in distinct integers. Then
	\[ \abs{A} \leq \frac{N}{ \exp\big(\Omega(\log N)^{1/7}\big) }. \]
\end{theorem}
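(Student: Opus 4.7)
I would take a Fourier-analytic density-increment approach that exploits the six-variable structure; the extra variables beyond a three-term progression are precisely what allow bounds of shape $\exp(-c(\log N)^{1/7})$ rather than $(\log N)^{-1}$-type. Embed $A$ in $\Z_{N'}$ with $N'\sim N$ and set $\alpha=|A|/N$. The number of solutions to $x_1+\cdots+x_5=5y$ with all variables in $A$ admits the Fourier representation
\[
T(A) \;=\; \frac{1}{N'}\sum_{\xi\in\Z_{N'}} \hat{1_A}(\xi)^5\,\overline{\hat{1_A}(5\xi)},
\]
whose main term $\alpha^6(N')^5$ dominates the trivial-solution contribution $O(|A|^2)$ by a huge margin unless $\alpha$ is already exponentially small. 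Solution-freeness in distinct variables therefore forces
\[
\sum_{\xi\neq 0}|\hat{1_A}(\xi)|^5\,|\hat{1_A}(5\xi)| \;\gtrsim\; \alpha^6 (N')^6.
\]
This is concentration of an $\ell^6$-quantity on the spectrum $\Spec_\eta(A)=\{\xi:|\hat{1_A}(\xi)|\geq \eta|A|\}$ at some dyadic level $\eta\gtrsim \alpha^{1/4}$, which is a substantially richer piece of information than the single large Fourier coefficient that standard three-variable Roth arguments produce.

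The hardest step is converting this $\ell^6$-mass into Bohr-set structure with polylogarithmic parameters. After a dyadic pigeonhole on $\eta$, I would apply a Sanders-style quantitative Bogolyubov theorem, fed with the full six-fold convolution of $1_A$: the sumset $2A-2A$, or a refinement that uses the six factors above more carefully, contains a Bohr set $B$ of rank $d=O(\log^{O(1)}(1/\alpha))$ and radius $\rho=\Omega(\log^{-O(1)}(1/\alpha))$ on a suitable sub-Bohr of which $A$ already has density at least $\alpha(1+\Omega(1))$. It is here that all of the quantitative savings accumulate: Chang's lemma applied naively gives rank $\alpha^{-O(1)}$, far too weak for the desired bound, while the six-fold convolution structure is exactly what is needed to trade into polylogarithmic dependence.

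To finish, I would iterate inside the Bohr-set framework rather than passing back to long arithmetic progressions: at each step the density is multiplied by $1+\Omega(1)$, the rank grows by an additive $O(\log^{O(1)}(1/\alpha))$, and the radius is shrunk by a multiplicative $\log^{-O(1)}(1/\alpha)$, after which one restricts to a regular sub-Bohr set in the standard way. After $O(\log(1/\alpha))$ such steps the density exceeds a constant, while the accumulated rank $D$ and $\log(1/\rho)$ are each of order $\log^{O(1)}(1/\alpha)$; the size of the final Bohr set is at least $N\exp(-O(D\log(1/\rho)))$, and non-emptiness forces $\log^{O(1)}(1/\alpha)\leq \log N$, which with the right polynomial exponents yields $\log(1/\alpha)\leq c^{-1}(\log N)^{1/7}$. \textbf{Main obstacle.} The crux is securing the correct polynomial exponents at the Bogolyubov step: any loss of $\alpha^{-\epsilon}$ in either the rank or the radius propagates multiplicatively through the $\log(1/\alpha)$ iterations and immediately destroys the exponent $1/7$. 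The technical heart of the argument is therefore a careful spectral $L^p$ analysis, in the spirit of Sanders, tailored precisely to the six-fold convolutional structure forced by the equation.
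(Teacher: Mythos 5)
This theorem is stated in the paper only as a citation of \cite{schoen-shkredov}, not proved here; the authors do indicate, however, that the Schoen--Shkredov proof hinges on Sanders's quantitative Bogolyubov--Ruzsa lemma. Your proposal names that same tool, but the plan as written has a genuine gap at its core.

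Your opening Fourier computation is sound (modulo a small slip: the trivial-solution count is $O(\abs{A}^4)$, not $O(\abs{A}^2)$, though this remains negligible against $\alpha^6(N')^5$). From $\sum_{\xi\ne 0}\abs{\widehat{1_A}(\xi)}^5\abs{\widehat{1_A}(5\xi)}\gtrsim\alpha^6(N')^6$ one extracts, via Parseval and H\"older, a nonzero frequency with $\abs{\widehat{1_A}(\xi)}\gtrsim\alpha^{1/4}\abs{A}$. But this only gives the classical density increment $1+\Omega(\alpha^{O(1)})$, which after $\alpha^{-O(1)}$ iterations reproduces bounds of type $(\log N)^{-O(1)}$ --- nowhere near the exponent $1/7$. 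To reach Behrend shape one needs a constant-factor increment $1+\Omega(1)$ with Bohr-set parameters worsening only polylogarithmically, and you cannot simply ``feed the six-fold convolution'' into Bogolyubov--Ruzsa: that lemma takes a small-doubling (or density) hypothesis on $A$ as input, not a Fourier-mass bound. The crucial missing ingredient is a mechanism turning solution-freeness into a constant-factor increment, and the route to it does not go through the spectral count $T(A)$. In arguments of this genre (including the paper's own Section \ref{section:finFields} for four variables) one first splits on whether a sumset such as $A+A$ is small or large; in the small case Bogolyubov--Ruzsa supplies a Bohr set inside a difference set of $A$, and solution-freeness is then used as a combinatorial disjointness statement --- some dilate of $A$ must avoid many translates of that Bohr set --- to force the increment by an averaging/covering argument, while the large-sumset case is handled by a direct argument. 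None of this appears in your sketch; the clause that $A$ ``already has density at least $\alpha(1+\Omega(1))$ on a suitable sub-Bohr'' is precisely the conclusion one must prove, and nothing in your preceding steps delivers it.
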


Here one has an almost matching lower bound: Behrend's construction gives sets $A$ of size at least $\exp\big(-O(\log N)^{1/2}\big)N$ that do not contain any solutions to this equation.

Around the same time, Bloom \cite{bloom:4} established improved bounds for four and five variable equations, inspired by Sanders's technique from \cite{sanders:roth}:

\begin{theorem}[\cite{bloom:4}]
	Suppose $A \subset [N]$ does not contain any non-trivial\footnote{A solution $(x_1,\ldots,x_k)$ to \eqref{eqn:gen_eqn} is called trivial if one can partition the index set $[k]$ into parts on which the variables $x_j$ are constant and the coefficients $c_j$ sum to $0$. For example $(x,\ldots,x)$.} solutions to the equation in \eqref{eqn:gen_eqn}. Then
	\[ \abs{A} \leq \frac{N}{ (\log N)^{k-2-o_c(1)} }. \]
\end{theorem}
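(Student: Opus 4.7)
The plan is to run a density-increment argument, in the spirit of Roth and Sanders, exploiting the fact that having $k \geq 4$ variables in the equation gives much stronger Fourier information than in the three-variable case.

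\emph{Step 1 (Fourier extraction).} Embed $A$ in $\Zmod{N'}$ for a prime $N' \asymp N$ and set $\alpha = |A|/N$. Expanding the number of solutions to $c_1 x_1 + \cdots + c_k x_k = 0$ inside $A$ via Fourier inversion, the main term is $\asymp \alpha^k N^{k-1}$, so the hypothesis of no non-trivial solutions forces the non-zero frequencies to cancel this main term. Majorising $k-2$ of the Fourier factors by $\sup_{\xi \neq 0}|\widehat{1_A}(\xi)|$ and pairing the remaining two using Parseval yields
\[ \sup_{\xi \neq 0}|\widehat{1_A}(\xi)| \gtrsim \alpha^{1/(k-2)}|A|. \]
For $k=3$ this is the classical Roth bound $\alpha|A|$; for $k \geq 4$ the exponent $1/(k-2)$ provides substantial additional strength, and this is essentially the only place where the extra variables are used.

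\emph{Step 2 (density increment and iteration).} A Fourier coefficient of size $\eta|A|$ with $\eta \asymp \alpha^{1/(k-2)}$ should give, via a Bohr-set / Chang-type analysis, a regular Bohr set $B$ (or, in the integer setting, a long arithmetic progression) on which $A$ has density at least $\alpha(1 + c\eta) = \alpha + c\,\alpha^{(k-1)/(k-2)}$, while $|B|/N$ drops only by a factor $\exp(-\log^{O(1)}(1/\alpha))$. Iterating, the density evolves by $\alpha_{n+1} \geq \alpha_n + c\,\alpha_n^{(k-1)/(k-2)}$, which by comparison with the ODE $\alpha'(t) = c\,\alpha(t)^{(k-1)/(k-2)}$ reaches a constant after $O(\alpha_0^{-1/(k-2)})$ steps. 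Summing the polylogarithmic losses across those iterations,
\[ \log N \gtrsim \alpha^{-1/(k-2)} \cdot \log^{O(1)}(1/\alpha), \]
which rearranges to $\alpha \leq (\log N)^{-(k-2) + o_c(1)}$, as claimed.

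\emph{Main obstacle.} The decisive step is keeping the loss per iteration polylogarithmic in $1/\alpha$ rather than polynomial. Running the argument on progressions in the style of Roth would yield only $|P| \asymp N^{1/2}$ per step, producing a doubly-logarithmic bound of shape $(\log\log N)^{-(k-2)}$ rather than the sought $(\log N)^{-(k-2) + o(1)}$. Passing to Bohr sets of dimension $\log^{O(1)}(1/\alpha)$ and controlling how the dimension and radius parameters deteriorate under the density increment is the technical heart of the argument, and requires Sanders's refinement of the Bohr-set machinery---in particular, a careful $L^p$ spectral analysis carried out with the stronger threshold $\eta \gtrsim \alpha^{1/(k-2)}$ in place of the classical $\eta \gtrsim \alpha$.
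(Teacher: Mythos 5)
The theorem you are proving is cited from Bloom \cite{bloom:4} and is not proved in this paper, so there is no in-paper argument to compare against; I evaluate your proposal on its own terms.

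Step 1 is correct: majorising $k-2$ of the Fourier factors by $\sup_{\xi \neq 0}|\widehat{1_A}(\xi)|$ and applying Cauchy--Schwarz plus Parseval to the remaining two does yield a nonzero frequency with $|\widehat{1_A}(\xi)| \gtrsim \alpha^{1/(k-2)}|A|$, and this is indeed the sole point at which $k \geq 4$ provides extra leverage.

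Step 2 contains a genuine gap. You posit a density increment of the form $\alpha \mapsto \alpha(1 + c\,\alpha^{1/(k-2)})$ obtained at only polylogarithmic cost per iteration, and then run this $O(\alpha^{-1/(k-2)})$ times. But the Bourgain-style increment that you would extract from a single large Fourier coefficient, while it does give the stated increment $\alpha(1 + c\eta)$, increases the Bohr-set rank by $1$ and shrinks the radius by a factor on the order of $1/d$ at every step. After $J \sim \alpha^{-1/(k-2)}$ steps the rank is of order $J$ and the Bohr set has shrunk by roughly $\exp\bigl(-\Omega(J^2 \log J)\bigr)$; the per-step loss is not polylogarithmic, and the resulting bound is only $\alpha \gtrsim (\log N)^{-(k-2)/2 + o(1)}$. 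This loses precisely a factor of two in the exponent---for $k=3$ it is the gap between Bourgain's $(\log N)^{-1/2}$ and Sanders's $(\log N)^{-1+o(1)}$. Bloom's proof does not run the classical increment scheme with a sharper threshold; its iteration has a different shape. One obtains a constant-factor increment $\alpha \mapsto (1+c)\alpha$ in a single step, at the larger one-time cost of a rank increase of order $\alpha^{-1/(k-2)}\log^{O(1)}(1/\alpha)$, so that only $O(\log(1/\alpha))$ iterations are needed and the final rank is $\alpha^{-1/(k-2)}\log^{O(1)}(1/\alpha)$. It is this bigger-increment/bigger-cost/fewer-steps structure, powered by the $L^p$-analysis of the large spectrum, that produces the full exponent $k-2$. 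Your invocation of Sanders's machinery is the right instinct, but as written the bookkeeping in Step 2 does not close, and without the restructured iteration you would land on $(\log N)^{-(k-2)/2}$ rather than the claimed bound.
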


There thus remained an almost exponential gap between the lower and upper bounds for four and five variable equations. In this paper we show that one indeed has Behrend-shape upper bounds for these. For example:

\begin{theorem}\label{thm:4varInts}
	Suppose $A \subset [N]$ does not contain any non-trivial solutions to the equation $x+y+z = 3w$. Then
	\[ \abs{A} \leq \frac{N}{ \exp\big(\Omega(\log N)^{1/7}\big) }. \]
\end{theorem}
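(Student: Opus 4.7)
The plan is a density-increment argument on Bohr sets, combining Fourier analysis of the four-variable count with the combinatorial structure of $A+A+A$ supplied by the companion result of the paper. Set $\alpha = |A|/N$ and embed $A$ into $\Z/N'\Z$ for a prime $N' \asymp N$.

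First, expanding the count $T(A) = \#\{(x,y,z,w) \in A^4 : x+y+z=3w\}$ via the Fourier transform yields
\[ T(A) = \frac{1}{N'}\sum_r \widetilde{1_A}(r)^3\, \overline{\widetilde{1_A}(3r)}, \]
with $r=0$ giving the expected main term of order $\alpha^4 (N')^3$. Since $A$ has no non-trivial solutions, $T(A)$ is only the diagonal count $|A|$, so the sum over $r \neq 0$ must cancel the main term. A H\"older-type manoeuvre then produces either a single large Fourier coefficient $|\widetilde{1_A}(r_0)| \gg \alpha^2|A|$ (the naive Roth step), or, via a Chang/Sanders-style refinement, locates the large spectrum $\Spec_\eta(A)$ inside a Bohr set of controlled dimension. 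Crucially, the dilate $3r$ appearing as an argument couples the spectral information about $A$ to the additive structure of $A+A+A$.

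The key new input is the companion result: for $A \subset [N]$ of density above the claimed threshold, the sumset $A+A+A$ contains a long arithmetic progression (or high-dimensional Bohr set) of Behrend-quality parameters, namely dimension $\mathrm{polylog}(1/\alpha)$ and radius shrinking only polynomially. Since the absence of non-trivial solutions to $x+y+z=3w$ forces every $3w$ with $w \in A$ to have a unique representation $w+w+w$ in $A+A+A$, the set $3\cdot A$ is very thin inside the long progression supplied by the companion result. Combining this thinness with the spectral information above localises $A$ onto a structured set of controlled complexity on which its density has increased by a factor $(1+c)$.

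Iterating this increment, the Bohr-set dimension grows essentially linearly in the iteration count while the radius shrinks like $(\alpha/d)^{\Theta(1)}$ per step; balancing these against the $O(\log 1/\alpha)$ iterations needed to push density to $\Omega(1)$ yields the $(\log N)^{1/7}$ exponent, in the same spirit as \cite{schoen-shkredov}. The main obstacle is the density-increment step: the six-variable argument of \cite{schoen-shkredov} has an extra convolution factor in which to absorb losses, whereas here the convolution $1_A * 1_A * 1_A$ must dualise directly against $1_A$, so one must exploit the structure of $A+A+A$ with much greater efficiency. Making this work quantitatively is the technical heart of the proof.
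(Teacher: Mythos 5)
Your high-level framework is right — density increment on Bohr sets, $O(\log(1/\alpha))$ iterations, and a rank/radius balance yielding the exponent $1/7$ — but the proposed density-increment mechanism has two genuine gaps. First, the Fourier expansion $T(A) = N'^{-1}\sum_r \widehat{1_A}(r)^3\,\overline{\widehat{1_A}(3r)}$ followed by H\"older and a Chang/Sanders step on $\Spec_\eta(1_A)$ is essentially the modern three-variable Roth argument, and it cannot deliver more than $\alpha \ll (\log N)^{-1+o(1)}$: the extra variable changes nothing qualitative once you bound the off-diagonal sum by $\sup_r |\widehat{1_A}(r)|$ and Parseval, and your ``single large coefficient $\gg \alpha^2|A|$'' branch is a dead end for Behrend-type bounds. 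The paper explicitly declines to analyse the solution-counting function directly for this reason.

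Second, Theorem \ref{thm:3Aints} is not an ingredient in the proof of Theorem \ref{thm:4varInts}; they are siblings sharing the Croot--Sisask almost-periodicity machinery, not parent and child. Knowing that $3A$ contains a long progression carries no information about how $A$ or $3\cdot A$ sits relative to that progression, so one cannot extract a density increment from that statement used as a black box. What actually drives the increment is an $L^\infty$-almost-periodicity result for a specifically chosen three-fold convolution (Theorem \ref{thm:Linfty-ap_Bohr}), together with the observation that solution-freeness forces $1_{-3\cdot A'}*1_A*1_{A+A'}(0) = |A'|$, which is negligible against $|A'|\,|A|$. The argument then splits according to whether $|A+A'|$ is large or small relative to $|A|/\alpha$: in the large-sumset case one applies almost-periodicity with $L$ the complement of $A+A'$ in a Bohr ball and pigeonholes on $|L|$; in the small-sumset case a simpler variant (needing no solution-free hypothesis) applies. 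This dichotomy and the precise almost-periodicity input are the technical heart, and your outline leaves both open.
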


In the much-studied finite field setting, where $[N]$ is replaced by a vector space over a finite field, we establish the following slightly stronger result.

\begin{theorem}\label{thm:finFieldRoth}
	Let $q$ be a prime power and let $A \subset \F_q^n$ be a set of size $\alpha \, q^n$. If $A$ does not contain any non-trivial solutions to $x+y+z=3w$, then
	\[ \alpha \leq \exp\left( -\Omega\left( n^{1/5} \right) \right). \]
\end{theorem}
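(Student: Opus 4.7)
The strategy is an iterative density increment, paralleling the six-variable argument of Schoen and Shkredov but refined for the four-variable case. We may assume $q \neq 3$: in characteristic $3$ the variable $w$ is unconstrained and the equation reduces to $x+y+z=0$, where the Ellenberg--Gijswijt cap-set bound is incomparably stronger than the claim. Setting $G = \F_q^n$ and $N = q^n$, the usual Fourier identity expresses the number of solutions to $x+y+z=3w$ in $A^4$ as
\[
\frac{1}{N}\sum_{r \in \Ghat} \widehat{1_A}(r)^3 \overline{\widehat{1_A}(3r)}.
\]
By hypothesis this equals $|A|$, while the $r=0$ contribution $|A|^4/N$ already dominates whenever $\alpha \gg N^{-2/3}$; and since $r \mapsto 3r$ is a bijection on $\Ghat$, H\"older yields
\[
\sum_{r \in \Ghat} |\widehat{1_A}(r)|^4 \;\geq\; \tfrac{1}{2}|A|^4,
\]
i.e.\ the additive energy of $A$ is at least $|A|^4/(2N)$.

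The heart of the proof is to convert this Fourier/energy bound into a subspace $V \leq G$ of codimension at most $O(\log^4(1/\alpha))$ on which $A$ admits a constant-factor density increment. Let $\Lambda = \Spec_\rho(A)$ be the truncated spectrum at $\rho \asymp \alpha^{1/2}$; the $L^4$ bound above forces $\Lambda$ to carry a positive proportion of the Fourier mass. A Chang-type lemma, combined with a Sanders--Croot--Sisask almost-periodicity refinement tailored to the $L^4$ input, then produces a subspace $W \leq \Ghat$ containing $\Lambda$ with $\dim W \lesssim \log^4(1/\alpha)$. Dually, a suitable convolution of $1_A$ is approximately invariant under $V = W^\perp$, and pigeonholing a dense coset $v + V$ gives $|A \cap (v+V)|/|V| \geq \alpha(1+c)$ for an absolute constant $c > 0$.

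We then iterate: the restriction of $A$ to $v + V$ still avoids non-trivial solutions, so the same argument applied inside $V$ yields density $\geq \alpha(1+c)^2$ at a further codimension cost of $O(\log^4(1/\alpha))$, and so on. After $O(\log(1/\alpha))$ iterations the density would exceed $1$, a contradiction; hence the total codimension loss $O(\log(1/\alpha)) \cdot O(\log^4(1/\alpha)) = O(\log^5(1/\alpha))$ must exceed $n$, forcing $\log(1/\alpha) \geq \Omega(n^{1/5})$, which is the claim. The main obstacle is the density-increment step itself: a naive Meshulam-type extraction of a single large Fourier coefficient delivers codimension $1$ but only a $(1+\Omega(\alpha^{1/2}))$-factor gain, iterating to a mere polynomial bound $\alpha \lesssim n^{-1/2+o(1)}$; producing instead a constant-factor gain at only $\mathrm{polylog}(1/\alpha)$ codimension cost requires exploiting many large Fourier coefficients simultaneously via a refined Chang--Sanders spectrum analysis, and balancing its exponents precisely gives the exponent $1/5$.
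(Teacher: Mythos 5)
Your high-level scheme — iterated density increment gaining a constant factor at a cost of $O(\log^4(1/\alpha))$ codimension per step, terminating after $O(\log(1/\alpha))$ steps so that $\log^5(1/\alpha) \gtrsim n$ — is exactly right, and matches the bookkeeping in the paper. But the proposed route to the density-increment lemma does not work, and the gap sits precisely at the step you flag as ``the main obstacle.''

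First, the energy input you extract is essentially vacuous. From $1_A*1_A*1_A*1_{-3\cdot A}(0) = |A|$ and the Fourier identity one obtains $\sum_{r\neq 0}|\widehat{1_A}(r)|^3\,|\widehat{1_A}(3r)| \gg |A|^4/N$, hence $\sum_{r\neq 0}|\widehat{1_A}(r)|^4 \gg |A|^4/N$. But $\sum_r |\widehat{1_A}(r)|^4 \geq |\widehat{1_A}(0)|^4 = |A|^4$ and $\sum_r |\widehat{1_A}(r)|^2 = N|A|$ in all cases; the energy bound $E(A) \geq |A|^4/(2N)$ holds for \emph{every} set $A$ by Cauchy--Schwarz, with no hypothesis on solutions. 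So your $L^4$ input carries no usable information about the spectrum of $A$. Second, and more seriously, the deduction ``truncated spectrum $\Lambda = \Spec_\rho(A)$ at level $\rho \asymp \alpha^{1/2}$ lies in a subspace $W$ of dimension $O(\log^4(1/\alpha))$'' is false as stated: Chang's theorem, applied to $A$ of density $\alpha$ at level $\rho\asymp\alpha^{1/2}$, gives a span of size $O(\rho^{-2}\log(1/\alpha)) = O(\alpha^{-1}\log(1/\alpha))$, polynomial in $1/\alpha$, and no amount of ``tailoring'' of Croot--Sisask will upgrade this to polylogarithmic, since Croot--Sisask controls almost-periods of convolutions, not the additive structure of large spectra of arbitrary dense sets. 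Annihilating a span of size $\alpha^{-1}\log(1/\alpha)$ per step only recovers a Meshulam/Sanders-type polynomial bound.

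The paper's actual mechanism is different in a way that matters. It never inspects $\Spec(A)$ directly. Instead it exploits that the \emph{three-fold} convolution $1_{-3\cdot A}*1_A*1_{A+A}$ takes the extremely small value $|A|$ at $0$ when $A$ is solution-free, while Theorem~\ref{thm:Linfty-ap_FF} shows that such a three-fold convolution is $L^\infty$-almost-periodic along a subspace $V$ of codimension $O(\log^4(1/\alpha))$ (this follows from $L^p$-almost-periodicity of the \emph{two}-fold convolution $1_A*1_L$ at $p\asymp\log(1/\alpha)$, upgraded to $L^\infty$ via H\"older against the third factor). Averaging over $V$ then either forces $1_A*\mu_V$ to be large somewhere (Proposition~\ref{prop:noSolsLargeSumsetImpliesInc}, when $\mu(A+A)\geq\tfrac12$) or one reduces to the small-sumset case (Proposition~\ref{prop:sumsetImpliesInc}), where the same almost-periodicity applied to $1_A*1_A*1_{-(A+A)}$ directly forces a density increment. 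Chang's theorem does appear in the machinery (via Proposition~\ref{prop:localChang}), but applied to the Croot--Sisask almost-period set $T$, which has doubling $\leq 2$ and density $\exp(-\mathrm{polylog}(1/\alpha))$ — \emph{that} is why Chang yields a polylog-dimensional span there; applied to $A$ itself it does not. To repair your argument you would essentially have to rederive this convolution-level mechanism; the spectral picture of $A$ alone is insufficient.
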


By contrast, the best bound known for three-term progressions in this setting comes from the intricate work of Bateman and Katz \cite{bateman-katz}, who showed that if $A \subset \F_3^n$ is free of non-trivial three-term progressions then $\abs{A} \leq 3^n/n^{1+\epsilon}$, where $\epsilon$ is some strictly positive constant.

Before we move on, let us make a quick remark about our arguments. These are somewhat different to those of \cite{schoen-shkredov}, which used the bounds of Sanders \cite{sanders:bogolyubov} for a result known as the Bogolyubov--Ruzsa lemma. However, the proof of this lemma used in turn an almost-periodicity result of Croot and the second-named author \cite{CS}, and this will together with an insight from \cite{sanders:bogolyubov} be of key importance in our proofs. This is actually part of the motivation behind this paper: while one aim is to prove strong bounds for as close a problem as possible to Roth's theorem, another is to attempt to illustrate the natural limitations of the ideas of \cite{CS, sanders:bogolyubov}. We thus give two different proofs of Theorem \ref{thm:finFieldRoth} that demonstrate different aspects of the results; see Section \ref{section:finFields}.

\subsection*{Structures in sumsets}
Another big direction of additive combinatorics is to study the structure of sumsets $A+B = \{a+b : a \in A, b \in B\}$ for various types of sets $A$ and $B$ in an abelian group. Here we focus on the case of three-fold sumsets $3A := A+A+A$ where $A$ is a large subset of $[N]$ or a finite abelian group $G$, as was first tackled by Freiman, Halberstam and Ruzsa \cite{FHR}. Suppose $A \subset [N]$ has size at least $\alpha N$, $\alpha > 0$. The following lower bounds for the length of an arithmetic progression in $3A$ are known\footnote{Here and throughout the paper we let $c$ and $C$ denote unspecified positive absolute constants whose values need not be the same at different occurrences.}.

%\begin{tabular}{r l l}
%\end{tabular}
\newcommand{\hs}{\hspace{2em}}
\begin{tabularx}{\textwidth}{l X l}
 Density range & \hs Length of AP in $3A$ & \\
$\alpha \geq (\log N)^{-1/3 + o(1)}$		&	\hs $N^{c \alpha^3}$		&	F--H--R \cite{FHR} \\
$\alpha \geq (\log N)^{-1/2 + o(1)}$	&	\hs $N^{c \alpha^{2+o(1)}}$	&	Green \cite{green:longAPs} \\
$\alpha \geq (\log N)^{-1/2 + o(1)}$	&	\hs $N^{c \alpha}$		&	Sanders \cite{sanders:structInSumsets} \\
    $\alpha \geq (\log N)^{-1+o(1)}$	&	\hs $N^{c \alpha^{1+o(1)}}$		&	Henriot \cite{henriot}  \\
$\alpha \geq (\log N)^{-2+o(1)}$	&	\hs $\exp\left( (\alpha^{1/2+o(1)} \log N)^{1/2} \right)$	&	Henriot \cite{henriot} 
\end{tabularx}

Henriot \cite{henriot} gives a useful and clear summary of the history of the problem, and we refer there for more information. Let us also mention that Henriot's results are actually more powerful in the asymmetric case of sumsets $A+B+C$: in this setup \cite{henriot} allows $B$ and $C$ to be much sparser, namely of densities around $\exp\left(-O(\log N)^c \right)$, as long as the density of $A$ is more-or-less as above.

Here we prove the following, which is non-trivial in the range $\alpha \geq \exp\left(- c (\log N)^{1/5} \right)$.

\begin{theorem}\label{thm:3Aints}
	Let $A \subset [N]$ have size at least $\alpha N$. Then $3A$ contains an arithmetic progression of length at least
	\[ \alpha \exp\left( \left( \frac{c \log N}{\log^3(2/\alpha)} \right)^{1/2} \right). \]
\end{theorem}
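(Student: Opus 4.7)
The strategy is to show that $3A$ contains a translate of a Bohr set $B(\Gamma, \rho) \subset \Z/N'\Z$ (where $N' \asymp N$) of controlled rank $d = |\Gamma|$ and radius $\rho$, and then to extract a long arithmetic progression from that Bohr set. More precisely, I would aim to show that $A + A$ contains a translate $B(\Gamma,\rho) + s$; once this is established, for any fixed $a \in A$ the set $a + s + B(\Gamma, \rho)$ lies inside $3A$, and a standard Freiman geometry-of-numbers argument produces an arithmetic progression of length $\gtrsim \rho N^{1/d}/d$ inside such a Bohr set. The factor of $\alpha$ in the statement would enter when combining this progression with $A$ itself, using the density of $A$ to slightly extend the progression inside $A + B$.

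The Bohr set inside a translate of $A + A$ is produced by the Croot--Sisask almost-periodicity lemma of \cite{CS} combined with Chang's theorem. Applied to $1_A * 1_A$ with parameters $p \geq 2$ and $\epsilon > 0$ to be tuned, Croot--Sisask yields a symmetric set $T \subset \Z/N'\Z$ of density $|T|/N' \geq (\alpha/2)^{Cp/\epsilon^2}$ such that $\|\tau_t(1_A * 1_A) - 1_A*1_A\|_p \leq \epsilon \alpha (N')^{1/p}$ for every $t \in T$, where $\tau_t$ denotes translation by $t$. Iterating the triangle inequality along $\ell T - \ell T$ multiplies the error by $\ell$, while Chang's spectral lemma bounds $|\Spec_{1/2}(T)| \leq Cp\log(2/\alpha)/\epsilon^2$ and hence forces a bounded iterated sumset of $T$ to contain a Bohr set $B(\Gamma, \rho_0)$ of rank $d = O(p\log(2/\alpha)/\epsilon^2)$ and absolute-constant radius $\rho_0$. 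Passing to a regular sub-Bohr set and averaging the $L^p$-bound across it converts this almost-periodicity into the pointwise inclusion $B(\Gamma, \rho) + s \subset A + A$ for suitable $\rho$ and shift $s$.

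The parameters $p$ and $\epsilon$ are then tuned to optimise the resulting AP length $\exp((\log N)/d)$. With the natural choice $p = O(\log(2/\alpha))$ (needed to upgrade $L^p$-control to an essentially pointwise statement via Markov's inequality) and $\epsilon$ balancing $d$ against $\log N$, one is led to $d \asymp \sqrt{\log N \cdot \log^3(2/\alpha)}$, which yields the exponent $\sqrt{c\log N / \log^3(2/\alpha)}$ demanded by the theorem.

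The main difficulty I expect is the passage from $L^p$-approximation to the pointwise inclusion $B + s \subset A+A$: the $L^p$-bound only controls the measure of the set where $1_A*1_A$ is small, so one must iterate the Bohr-set construction on a sub-Bohr set to shrink this exceptional set below the size of any individual translate of $B$, losing rank and radius at each step. Tracking these losses carefully, combined with the Chang contribution of $\log(2/\alpha)$ and the extra logarithmic factors coming from the iteration and from converting $L^p$-control at $p \asymp \log(2/\alpha)$ into pointwise information, is what accounts for the $\log^3$ (rather than a smaller power) in the denominator of the final bound, and is the most technical aspect of the argument.
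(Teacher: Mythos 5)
There is a genuine gap in your approach, and it is precisely at the step you flag as ``the main difficulty.'' You propose to show that $A+A$ contains a \emph{full} translate of a low-rank Bohr set and then translate by an element of $A$. But the Croot--Sisask machinery applied to $1_A*1_A$ cannot deliver this: an $L^p$-almost-periodicity bound $\|\tau_t(1_A*1_A) - 1_A*1_A\|_p \leq \epsilon |A| (N')^{1/p}$ only controls the measure of the ``bad'' set on which $1_A*1_A$ is small, and this bad set scales proportionally with the Bohr set of almost-periods. Your suggestion to ``iterate the Bohr-set construction on a sub-Bohr set to shrink this exceptional set below the size of any individual translate'' does not terminate: to get full containment you would need $\epsilon$ smaller than $1/|V|$, and for two-fold convolutions of general dense sets the rank of the Bohr set grows \emph{polynomially} in $1/\epsilon$ (see the first bullet in Section~\ref{section:remarks}), so pushing $\epsilon$ down to $1/|V|$ destroys the rank bound entirely. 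Full containment of a Bohr set in $A+A$ (as opposed to $2A-2A$) at these densities is not known and is not what the paper proves.

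The paper avoids this by never trying to find structure in $A+A$ at all. The proof of Theorem~\ref{thm:3Aints} (via Theorem~\ref{thm:ABCints} and Proposition~\ref{prop:B+A-A_general}) applies almost-periodicity to the convolution $1_A * 1_{B-A}$ (after reducing $A+B+C$ to $B+A'-A'$), exploiting the key fact that this function already attains its maximal value $|A|$ on \emph{all} of $B$. This ``higher energy'' is what lets one take the $L^p$ exponent in Croot--Sisask as large as the final parameter $p \asymp (\log N / \log^3(2/\alpha))^{1/2}$ while only paying logarithmically, so that the tolerance $\eta$ can be driven below $1/(\beta\,2^p)$ and the $(1-\eta)$-fractional intersection of $x+V$ with $A+B+C$ becomes a literal inclusion for $0.99|B|$ values of $x$. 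In short: it is a three-fold sumset statement proved with a two-fold convolution whose values are maximal on a dense set, not a two-fold sumset statement upgraded by translation. (This is also the content of the contrast in the concluding remarks between the $\epsilon$-dependence for $2A$ and $3A$.) Relatedly, the prefactor $\alpha$ in the theorem comes from the radius $\gtrsim (\alpha\gamma)^{1/2}/d$ of the Bohr set in Proposition~\ref{prop:B+A-A_general} combined with the cap $|V|\leq \beta\cdot 2^p$; it does not come from ``slightly extending the progression inside $A+B$'' as you suggest. Your parameter calculation of the final rank $d$ lands on the right answer numerically, but the role you assign to $p \asymp \log(2/\alpha)$ is not where the $\log^3$ arises; that factor comes from Chang's theorem together with the $k$-fold sumset parameter, while the Croot--Sisask $L^p$ exponent is taken of size $\asymp p$, the large parameter.
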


The length of the progression here is of course much smaller than previous results for large densities, being on par with what is known for just $A+A$ in this case, but when $\alpha$ gets small enough this theorem applies whereas those above do not. Let us mention, however, that there is a combinatorial argument due to Croot, Ruzsa and the first-named author \cite{CRS} that guarantees arithmetic progressions in $2A := A+A$ of length around $c\log N/\log(2/\alpha)$, which certainly extends the non-trivial density range further albeit with fairly short progressions. We thus know of quite different behaviours for different densities, but a lack of examples pervades. The best example we know of comes from \cite{FHR}: there it is shown that, for any $\alpha < c$, there is a set $A \subset [N]$ of size at least $\alpha N$ for which $3A$ does not contain an arithmetic progression of length
\begin{equation} N^{2/\log(1/\alpha)}. \label{eqn:FHRexample} \end{equation}
Theorem \ref{thm:3Aints} thus gives an answer of the right shape $\exp\left( (\log N)^c \right)$ for $\alpha = \exp\left(- (\log N)^c \right)$, but with a gap in the exponent on the $\log N$ compared to \eqref{eqn:FHRexample}.

These questions are also studied for subsets of vector spaces $\F_q^n$ over finite fields $\F_q$, where $q$ is considered fixed, but in this setting one generally looks at the dimensions of (affine) subspaces found in sumsets rather than arithmetic progressions, for obvious reasons. See for example \cite{green:finFields, sanders:structInSumsets, sanders:half, CLS} for more background. From the perspective of the present paper it is illuminating to consider what is known in this setting for $2A$, $3A$ and $4A$, for which the best bounds known for large densities are all due to Sanders. For $2A$, it is shown in \cite{sanders:half} that $2A$ contains an affine subspace of dimension at least $c \alpha n$ for $\alpha \geq C/n$. Sumsets $3A$ are known \cite{sanders:structInSumsets} to contain affine subspaces of dimension at least $n - C/\alpha$, and sumsets $4A$ are known \cite{sanders:bogolyubov} to contain affine subspaces of dimension at least $n - C \log^4(2/\alpha)$. Here we prove a result somewhat intermediate between the latter two:

\begin{theorem}\label{thm:3Avector}
	Let $A \subset \F_5^n$ be a set of size at least $\alpha \cdot 5^n$. Then $3A$ contains an affine subspace of dimension at least $c n/\log(2/\alpha)^3 - \log(1/\alpha)$.
\end{theorem}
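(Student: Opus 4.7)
The plan is to combine the Croot--Sisask almost-periodicity machinery with a Bogolyubov-type step adapted to the vector-space structure of $\F_5^n$, so as to produce an affine subspace $V$ of the claimed dimension and a translate $x_0$ with the property that $x_0 + V \subseteq 3A$; equivalently, the triple convolution $1_A \ast 1_A \ast 1_A$ is strictly positive on the coset $x_0 + V$.

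First I would apply Croot--Sisask almost-periodicity with parameters $p = \Theta(\log(2/\alpha))$ and $\epsilon$ a small power of $1/\sqrt{\log(2/\alpha)}$, obtaining a symmetric set $T \subseteq \F_5^n$ with $0 \in T$ and $|T|/5^n \geq \exp(-C\log^3(2/\alpha))$ consisting of $L^p$-approximate periods of the convolution $\mu_A \ast 1_A$. The cubic-logarithmic shape of the density bound is exactly what the Croot--Sisask estimate $(\alpha/2)^{Cp/\epsilon^2}$ yields with these parameter choices, and is the same structural input driving Theorem~\ref{thm:finFieldRoth}.

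Next I would upgrade $T$ to an affine subspace $V$ via a Bogolyubov-type argument inside $\F_5^n$. The idea is to locate $V$ inside a bounded iterated sumset of $T$ (which is itself a set of almost-periods, up to a controlled degradation of the error parameter), and then extract $V$ of the required dimension using the spectral structure available in $\F_5^n$. The dimension lower bound of the form $cn/\log(2/\alpha)^3$ reflects the density of $T$ combined with this extraction, while the additive loss $-\log(1/\alpha)$ absorbs the bookkeeping in choosing the iterated sumset and the translate $x_0$ in the final step.

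Finally, to conclude $x_0 + V \subseteq 3A$, I would choose $x_0$ to maximise the triple convolution, so that $1_A \ast 1_A \ast 1_A(x_0) \geq \alpha^3 \cdot 5^n$. H\"older's inequality applied to the $L^p$-almost-periodicity of $\mu_A \ast 1_A$ along $V$ then gives, for every $v \in V$,
\[ \bigl|\, 1_A \ast 1_A \ast 1_A(x_0 + v) - 1_A \ast 1_A \ast 1_A(x_0)\,\bigr| \ll \alpha^3 \cdot 5^n, \]
so the triple convolution remains strictly positive on $x_0 + V$, yielding the desired inclusion. The hardest step will be calibrating the Bogolyubov passage from $T$ to $V$: the subspace $V$ must attain exactly the claimed dimension while still carrying a sufficiently strong almost-period property to make the H\"older estimate above go through, and balancing these competing demands is the technical heart of the argument.
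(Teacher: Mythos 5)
Your final step breaks down, and the gap it leaves is precisely the obstacle the paper's argument is designed to avoid. Choosing $x_0$ to maximise $1_A*1_A*1_A$ only guarantees $1_A*1_A*1_A(x_0) \geq |A|^3/|G| = \alpha|A|^2$, whereas H\"older applied to $L^p$-almost-periodicity of $1_A*1_A$ gives an $L^\infty$ error of roughly $\epsilon|A|^2$. To keep the triple convolution positive on $x_0+V$ you therefore need $\epsilon < \alpha$; but then the rank of the set of almost-periods becomes of order $p\epsilon^{-2}\log^{2}(2/\epsilon\alpha)\log(2/\alpha)$, which is at least $\alpha^{-2}$, hopelessly larger than anything compatible with the claimed dimension. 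Your $\epsilon$, polylogarithmic in $1/\alpha$, is nowhere near small enough for the H\"older estimate to beat $\alpha|A|^2$.

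The paper sidesteps this by never looking at the maximum of the triple convolution at all. Instead (see Proposition \ref{prop:BAA} and the proof of Proposition \ref{prop:B+A-A_general}) it exploits that $1_A*1_{A-A}$ attains its maximum value $|A|$ at \emph{every} point of $A$, not at a single $x_0$. One can then afford $\epsilon = 1/2$ in $L^p$-almost-periodicity: if $1_A*1_{A-A}(x+t) = 0$ for a positive proportion of $t$ in the set of almost-periods, then $x$ contributes roughly $|A|^p$ to the $p$-th power of the $L^p$-norm of the translated difference, so the almost-periodicity bound caps the number of such bad $x \in A$ at $0.01|A|$. This produces a set $X$ covering 99\% of $A$ and a small subspace $V$ such that a translate of $X+V$ sits inside $3A$. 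You are also missing the two-scale structure: $p$ must be taken of order $n/\log^3(2/\alpha)$, not $\Theta(\log(2/\alpha))$; the Bohr subspace of almost-periods is a \emph{large} subspace $T$ of codimension a small constant times $n$; and the affine subspace placed into $3A$ is a separate, much smaller subspace $V \subset T$ of size at most $\alpha \cdot 2^p$, which is where the $-\log(1/\alpha)$ in the dimension bound comes from.
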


We actually show somewhat more, namely that these three-fold sumsets contain lots of translates of the respective arithmetic progression or subspace; see Section \ref{section:3A} for further statements, and see also Section \ref{section:remarks} for some further comparisons of $2A$, $3A$ and $4A$.

The rest of this paper is structured as follows. In the next section we set up some notation and describe some preliminaries on density increments, convolutions and almost-periodicity. In Section \ref{section:finFields} we prove Theorem \ref{thm:finFieldRoth}; indeed, we give two proofs as already mentioned. We then proceed to a proof of the general case, starting with a review of Bohr sets in Section \ref{section:Bohr} and the development of the appropriate almost-periodicity results in Section \ref{section:ap}, and wrapping up with the density increment and iterative arguments in Sections \ref{section:densInc} and \ref{section:iteration}. We then turn to structures in $3A$ in Section \ref{section:3A}, and conclude with some remarks in Section \ref{section:remarks}.

\section{Notation and preliminaries}

If $A$ is a subset of a finite set $X$, we refer to $\mu(A) := \mu_X(A) := \abs{A}/\abs{X}$ as the \emph{density} of $A$ (in $X$).

\subsection*{The density increment strategy}
In proving the Roth-type theorems outlined above, we shall employ a so-called \emph{density increment} strategy, as have most proofs of Roth's theorem resulting in good bounds. This operates roughly as follows. Let $G$ be $[N]$ or $\F_q^n$. If $A \subset G$ has density $\alpha$ but contains no non-trivial solutions to $x+y+z=3w$, then one shows that $A$ has increased density $(1+c(\alpha))\alpha$ on a translate of some `large substructure' $V$ of $G$ -- say a long progression in the case of $[N]$ or a large subspace in the case of $\F_q^n$. Thus $\abs{A \cap (x+V)} \geq (1+c)\alpha \abs{V}$. One then looks at $(A-x) \cap V$, which is still solution-free by translation invariance, and tries to repeat the argument. One thus produces denser and denser solution-free sets on smaller and smaller substructures, but since a density can never increase beyond $1$, the iteration must at some point terminate (provided the function $c(\alpha)$ is nice enough). Generally this means that the substructures on which one is iterating must have become trivial, so as long as the original density is large enough for the increased densities to reach $1$ before the substructures become trivial, one has shown that the set must contain a solution to the equation.

Of course, all this is saying is roughly that we shall prove the result by induction; the whole game is to find arguments to make the substructures $V$ and the increments $c(\alpha)$ as large as possible, while keeping $V$ nice enough to iterate. For many proofs of Roth's theorem, the substructures on which one increments are directly related to the large Fourier coefficients of $A$; for us this is not quite the case, the substructures being uncovered instead by the probabilistic almost-periodicity results of \cite{CS}. To state one of these results in detail, let us introduce some further notation.

\subsection*{Normalisations, $L^p$-norms, convolutions}
Now, we have talked about densities above, and it is relatively standard practice in additive combinatorics these days to work with these rather than cardinalities of sets. An associated trend has been to furthermore use normalised convolutions and $L^p$ norms. In this paper we shall find it useful to work with both densities and cardinalities, as we shall operate relatively `locally' later on. We thus speak of densities, but write, for an abelian group $G$, a subset $X \subset G$, a function $f : G \to \C$ and a real number $p \geq 1$,
        \[ \mu_X := 1_X/\abs{X}, \qquad \norm{f}_p^p := \sum_{x \in G}\abs{f(x)}^p, \qquad f*g(x) := \sum_{y \in G} f(y)g(x-y). \]
	Here and throughout, $1_X$ denotes the \emph{indicator function} of $X$, taking the value $1$ if its input lies in $X$ and $0$ otherwise.

	Convolutions really are central objects for us when pursuing a density increment strategy as outlined above. Indeed, the quantity $1_A*\mu_V(x)$ is precisely $\abs{A \cap (x-V)}/\abs{V}$, which is the relative density of $A$ on $x-V$, and the number of solutions to our equation is precisely $1_A*1_A*1_A*1_{-3\cdot A}(0)$. Crucially, however, we shall not prove our results by studying this function directly, as did most previous proofs, but we shall nevertheless deal with similar convolutions, and for this the key tools will be certain almost-periodicity results.

\subsection*{Almost-periodicity}
Our main tool for showing properties of convolutions is the following $L^p$-almost-periodicity result, which is a version of the main theorem of \cite{CS}, but with somewhat less detailed moment estimates in the probabilistic arguments; see for example \cite{sanders:bogolyubov, CLS} for a proof.

\begin{theorem}\label{thm:Lp-ap}
	Let $p \geq 2$, $\epsilon \in (0,1)$ and $k \in \N$ be parameters. Let $A, L, S$ be finite subsets of an abelian group. Suppose $\abs{A+S} \leq K\abs{A}$. Then there is a set $T \subset S$ with $\abs{T} \geq 0.99 K^{-Cpk^2/\epsilon^2} \abs{S}$ such that
	\[ \norm{ 1_A*1_L(\cdot+t) - 1_A*1_L }_p \leq \epsilon \abs{A} \abs{L}^{1/p} \quad\text{for all $t \in kT-kT$.} \]
\end{theorem}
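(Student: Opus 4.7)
My plan is to follow the probabilistic-sampling argument of Croot and Sisask, as adapted in \cite{sanders:bogolyubov, CLS}. Write $f := 1_A * 1_L$. The idea is to approximate $f$ in $L^p$ by an empirical average of translates of $1_L$, and then to use the hypothesis $|A+S|\leq K|A|$ to pigeonhole over shifts of the sample and extract $T$.

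Set $m := \lceil C p k^2 / \epsilon^2 \rceil$ for a sufficiently large absolute constant $C$. For $\vec a \in A^m$ define
\[
F_{\vec a}(x) := \frac{|A|}{m}\sum_{i=1}^m 1_L(x - a_i),
\]
so that $\E_{\vec a} F_{\vec a}(x) = f(x)$ when $\vec a$ is uniform on $A^m$. The centered i.i.d.\ summands $X_i := 1_L(\cdot - a_i) - f/|A|$ are $L^p(G)$-valued random vectors satisfying $\E\|X_i\|_p^p \leq |L|$ (since $|X_i(x)| \leq 1$ and $\E X_i(x)^2 \leq f(x)/|A|$, summed over $x$ yields $\|f\|_1/|A| = |L|$). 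A vector-valued Khintchine/Marcinkiewicz--Zygmund inequality for $L^p$ (valid for $p \geq 2$, losing only a factor $C\sqrt{p}$ over the scalar $L^2$ case) then gives
\[
\E \|F_{\vec a} - f\|_p^p = \frac{|A|^p}{m^p} \E \Bigl\|\sum_{i=1}^m X_i\Bigr\|_p^p \leq \bigl( C\sqrt{p/m}\,|A|\,|L|^{1/p} \bigr)^p.
\]
For $C$ large enough this is much smaller than $(\epsilon|A||L|^{1/p}/(3k))^p$, so Markov's inequality produces a set $\mathcal G \subset A^m$ of density at least $0.999$ consisting of tuples $\vec a$ with $\|F_{\vec a} - f\|_p \leq \epsilon|A||L|^{1/p}/(3k)$.

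The key identity is $F_{\vec a + t}(x) = F_{\vec a}(x - t)$, where $\vec a + t := (a_1+t,\ldots,a_m+t)$. For $t \in S$ the shifted tuple lies in $(A+S)^m$, a set of size at most $K^m |A|^m$. There are at least $0.999|A|^m|S|$ pairs $(\vec a, s) \in \mathcal G \times S$, each determining an image $\vec a + s \in (A+S)^m$; by pigeonhole some $\vec b \in (A+S)^m$ arises from at least $0.99 K^{-m}|S|$ pairs. Letting $T := \{s \in S : \vec b - s \in \mathcal G\}$ gives $|T| \geq 0.99 K^{-m}|S|$. A direct calculation yields $F_{\vec b - s}(x) = G(x + s)$, where $G(y) := (|A|/m)\sum_i 1_L(y - b_i)$, so translation-invariance of $\|\cdot\|_p$ converts the goodness of $\vec b - s$ into $\|G - \tau_s f\|_p \leq \epsilon|A||L|^{1/p}/(3k)$ for every $s \in T$. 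The triangle inequality then gives $\|f - \tau_{s'-s} f\|_p \leq 2\epsilon|A||L|^{1/p}/(3k)$ for all $s, s' \in T$, i.e., almost-periodicity on $T - T$.

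To pass from $T - T$ to $kT - kT$, telescope: for $t = (t_1 + \cdots + t_k) - (t_1' + \cdots + t_k')$, setting $u_j := t_1 + \cdots + t_j$ and $v_j := t_1' + \cdots + t_j'$, one has
\[
\tau_t f - f = \sum_{j=1}^k \tau_{u_{j-1} - v_{j-1}}\bigl(\tau_{t_j - t_j'} f - f\bigr).
\]
Translation-invariance and the triangle inequality give $\|\tau_t f - f\|_p \leq k \cdot 2\epsilon|A||L|^{1/p}/(3k) < \epsilon|A||L|^{1/p}$, as required. The main technical obstacle is the $L^p$-concentration estimate in the first step: a naïve pointwise Marcinkiewicz--Zygmund estimate summed over the support of $f$ would pick up an unwanted factor polynomial in $|A+L|$, and sidestepping this hinges on treating the summands as $L^p(G)$-valued random vectors and exploiting the type-$2$ structure of $L^p$, which is what keeps the parameter dependence cleanly confined to the single factor $|L|^{1/p}$ on the right-hand side.
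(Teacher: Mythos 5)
Your proof is correct and follows the same Croot--Sisask probabilistic sampling argument that the paper invokes via its citations to \cite{sanders:bogolyubov} and \cite{CLS}: approximate $1_A*1_L$ in $L^p$ by an empirical average over random translates, concentrate via a vector-valued Marcinkiewicz--Zygmund (type-2) inequality, pigeonhole shifted samples into $(A+S)^m$ to extract $T$, and telescope to pass from $T-T$ to $kT-kT$. One minor aside: your closing remark overstates the danger of a pointwise Marcinkiewicz--Zygmund approach --- tracking the pointwise variance $\sigma(x)^2 = \E X_1(x)^2 \leq f(x)/\abs{A}$ and using $\sum_x \sigma(x)^p \leq \sum_x \sigma(x)^2 = \abs{L}$ (valid since $\sigma(x)\leq 1$ and $p\geq 2$) yields the same bound with no stray factor of $\abs{A+L}$, so the vector-valued formulation is a convenience rather than a necessity.
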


The result thus says that, for two sets $A$ and $L$, provided $A$ is structured in the sense of not growing much under addition with some set $S$, one can find lots of $L^p$-almost-periods of the convolution $1_A*1_L$, these being elements $t$ for which this function does not change by much (in $L^p$) upon translation by $t$.

We shall bootstrap this to other variants later on; let us now instead turn to the model setting, in which we can simply quote a similar result.

\section{Two proofs in the finite field setting}\label{section:finFields}

Here we shall prove Theorem \ref{thm:finFieldRoth}, which said that if a subset $A \subset \F_q^n$ of density $\alpha$ does not contain any non-trivial solutions to the equation
\begin{equation} x+y+z=3w \label{eqn:main} \end{equation}
then $\alpha \leq \exp\left(-c n^{1/5} \right)$. Note that, for this equation, a solution is trivial if and only if $x=y=z=w$, and that the result is trivial if $q$ is divisible by $2$ or $3$, so we assume throughout that it is not. We shall actually give two different proofs of this result, one more analytic and one more combinatorial -- but both following the density increment strategy outlined in the previous section. It turns out that the former proof extends more easily to the setting of more general finite abelian groups, from which one can deduce Theorem \ref{thm:4varInts}, whereas the latter serves as inspiration for the later proofs finding structures in sumsets.

In both proofs we shall use the following bootstrapped version of Theorem \ref{thm:Lp-ap}, which is a specialisation of \cite[Theorem 7.4]{CLS}.

\begin{theorem}\label{thm:Lp}
Let $p \geq 2$ and $\epsilon \in (0,1)$. Let $G = \F_q^n$ be a vector space over a finite field 
and suppose $A, L \subset G$ have $\mu(A) \geq \alpha$. Then there is a subspace $V$ of codimension
\[ d \leq Cp \epsilon^{-2} \log(2/\epsilon \alpha)^2 \log(2/\alpha) \]
such that, for each $t \in V$,
\[ \norm{ 1_A*1_L(\cdot+t) - 1_A*1_L }_{L^p} \leq \epsilon \abs{A} \abs{L}^{1/p}. \]
\end{theorem}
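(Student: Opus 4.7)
The strategy is to combine Theorem \ref{thm:Lp-ap} with a subspace-extraction argument of Chang--Bogolyubov type, exploiting the fact that in $G = \F_q^n$ annihilators of character subgroups are linear subspaces.

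The first step is to apply Theorem \ref{thm:Lp-ap} with the structured set $S$ taken to be all of $G$. Since $\abs{A + G} = \abs{G} = \alpha^{-1}\abs{A}$, one may take $K = 1/\alpha$, producing for each integer $k \in \N$ a set $T \subset G$ of density $\delta \geq 0.99\,\alpha^{Cpk^2/\epsilon^2}$ such that every element of $kT - kT$ is an $L^p$-almost-period of $1_A * 1_L$ of the required quality. After translating so that $0 \in T$, it suffices to locate a linear subspace $V \leq G$ of suitable codimension inside $kT - kT$.

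The second step is to produce such a subspace via Chang's theorem. For any $\eta \in (0,1)$, Chang's theorem gives a linear subspace $W \leq \widehat{G}$ of dimension $O(\eta^{-2}\log(1/\delta))$ containing the $\eta$-large spectrum $\Spec_\eta(T)$. A standard Bogolyubov-type Fourier calculation, using the identity that the Fourier transform of the $2k$-fold convolution $1_T * 1_{-T} * \cdots * 1_T * 1_{-T}$ is $\bigl|\widehat{1_T}\bigr|^{2k}$ together with Parseval's bound on the off-spectrum contribution, shows that the annihilator $V := W^\perp \leq G$ is contained in $kT - kT$ as soon as $\eta < \delta^{1/(2k-2)}$. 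This $V$ is a linear subspace of codimension at most $\dim W = O(\eta^{-2}\log(1/\delta))$.

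The final step is to choose the parameters $k$ and $\eta$ so that the codimension $\eta^{-2}\log(1/\delta)$ matches the claimed bound while the constraint $\eta < \delta^{1/(2k-2)}$ is met. Substituting $\log(1/\delta) \leq Cpk^2\epsilon^{-2}\log(2/\alpha)$ from the first step and choosing $k$ and $\eta^{-1}$ both of order $\log(2/\epsilon\alpha)$ yields the advertised bound $d \leq Cp\epsilon^{-2}\log^2(2/\epsilon\alpha)\log(2/\alpha)$. The main obstacle is arranging this balance: a direct use of Sanders's Bogolyubov--Ruzsa lemma on $2T - 2T$ would produce losses of order $\log^4(1/\delta)$, whereas the iterative Fourier approach above, letting $k$ grow, keeps the dependence on $\log(1/\delta)$ linear at the cost of extra logarithmic factors in $1/\epsilon\alpha$.
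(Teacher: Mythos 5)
The paper does not reprove this statement; it simply cites it as a specialisation of \cite[Theorem 7.4]{CLS}, so the relevant comparison is with the route one would actually take via Theorem \ref{thm:Lp-ap} and Chang-type arguments, which is in any case visible in the paper's own proof of Theorem \ref{thm:Linfty-ap_Bohr}.

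Your proposal has a genuine and fatal gap in the middle step. You invoke the Bogolyubov computation to place the annihilator $V = \Spec_\eta(\mu_T)^\perp$ inside $kT - kT$, and you correctly record the resulting constraint $\eta < \delta^{1/(2k-2)}$. But Theorem \ref{thm:Lp-ap} applied with $S = G$ and $K = 1/\alpha$ only gives $\delta = \mu(T) \geq 0.99\,\alpha^{Cpk^2/\epsilon^2}$, so $\log(1/\delta) \asymp p k^2 \epsilon^{-2}\log(2/\alpha)$ and the constraint becomes
\[ \log(1/\eta) \ > \ \frac{\log(1/\delta)}{2k-2} \ \asymp \ p k\, \epsilon^{-2}\log(2/\alpha). \]
Your proposed choice $\eta^{-1} \asymp \log(2/\epsilon\alpha)$ gives only $\log(1/\eta) \asymp \log\log(2/\epsilon\alpha)$, which violates this constraint by an enormous margin. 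If instead one chooses $\eta$ small enough to satisfy it, then the Chang codimension $O(\eta^{-2}\log(1/\delta))$ becomes $\exp\!\big(\Omega(pk\epsilon^{-2}\log(2/\alpha))\big)$, i.e. polynomial in $1/\alpha$ with a large exponent rather than polylogarithmic, which is nowhere near the claimed bound. Independently, the final accounting does not close even if the constraint is ignored: $\eta^{-2}\log(1/\delta) \asymp \log^2(2/\epsilon\alpha)\cdot p k^2\epsilon^{-2}\log(2/\alpha)$, and with $k \asymp \log(2/\epsilon\alpha)$ this is $p\epsilon^{-2}\log^4(2/\epsilon\alpha)\log(2/\alpha)$, with an extra $\log^2$ over the advertised bound.

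The structural problem is that trying to show $V \subset kT - kT$ forces the Chang threshold to depend on the (exponentially small) density of $T$. The correct idea, which is precisely the insight from \cite{sanders:bogolyubov} that the introduction flags and which the paper itself carries out in the proof of Theorem \ref{thm:Linfty-ap_Bohr}, is to avoid containment of $V$ in an iterated sumset of $T$ altogether. Instead, one fixes $0 \in X := T - z$ and uses the triangle inequality to pass from almost-periods $t \in kX$ to the mollified function $1_A*1_L*\mu_X^{(k)}$; one then applies Chang (Proposition \ref{prop:localChang}, or its finite-field form) to $X$ at a \emph{constant} threshold such as $1/2$, so the codimension is $O(\log(1/\mu(T)))$ with no $\eta^{-2}$ penalty, and lets the tail contribution from characters outside $\Spec_{1/2}(\mu_X)$ be killed by the geometric factor $|\widehat{\mu_X}(\gamma)|^k \leq 2^{-k}$. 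The codimension $\log(1/\mu(T)) \asymp p k^2\epsilon^{-2}\log(2/\alpha)$ with $k \asymp \log(2/\epsilon\alpha)$ then gives the advertised $Cp\epsilon^{-2}\log^2(2/\epsilon\alpha)\log(2/\alpha)$. Revisit the argument from this angle; the Bogolyubov containment route cannot be rescued by any choice of parameters here.
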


\subsection*{First proof: via $L^\infty$-almost-periodicity of three-fold convolutions}\hfill

This proof is based on the fact that if $A$ does not contain any non-trivial solutions to \eqref{eqn:main}, then $1_{-3\cdot A}*1_A*1_{A+A}(0) = \abs{A}$, which is very small. Two-fold convolutions like this are, however, fairly continuous functions: we shall deduce from a certain almost-periodicity result that $1_{-3\cdot A}*1_A*1_{A+A}*\mu_V(0)$ is then also small for some large subspace $V$. If $\abs{A+A}$ is large, a simple averaging then implies that $A$ has a density increment on a translate of $V$. If, on the other hand, $\abs{A+A}$ is small, then one is done by a similar, if slightly simpler, argument.

The relevant almost-periodicity result is the following, but note that this will be superseded by a slightly more efficient and general version in Section \ref{section:ap}.

\begin{theorem}\label{thm:Linfty-ap_FF}
	Let $\epsilon \in (0,1)$ and let $A, M, L \subset \F_q^n$ have $\mu(A), \mu(M) \geq \alpha$. Then there is a subspace $V$ of codimension at most $C \epsilon^{-2} \log(2/\epsilon \alpha)^2 \log (2/\alpha)^2$ such that
	\[ \abs{ 1_A*1_M*1_L(x+t) -  1_A*1_M*1_L(x) } \leq \epsilon \abs{A} \abs{M} \]
	for all $x \in G$ and $t \in V$. In particular
	\[ \abs{ 1_A*1_M*1_L(0) - 1_A*1_M*1_L*\mu_V(0) } \leq \epsilon \abs{A} \abs{M}. \]
\end{theorem}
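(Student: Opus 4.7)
The plan is to bootstrap the $L^p$-almost periodicity of the two-fold convolution $1_M * 1_L$ (supplied by Theorem~\ref{thm:Lp}) into $L^\infty$-almost periodicity of the three-fold convolution $1_A * 1_M * 1_L$ by viewing the outermost convolution as a smoothing operation: Hölder's inequality says that convolving against $1_A$ turns an $L^p$-bound on the inner function into an $L^\infty$-bound on the outer one, at the cost of a factor $\norm{1_A}_{p'} = \abs{A}^{1-1/p}$, where $p' = p/(p-1)$. Choosing $p$ of size $\log(2/\alpha)$ essentially absorbs this loss.

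Concretely, I would apply Theorem~\ref{thm:Lp} with $M$ in the role of ``$A$'' (permissible since $\mu(M) \geq \alpha$) and $L$ in the role of ``$L$'', with parameters $p \geq 2$ and $\epsilon' \in (0,1)$ to be chosen. This produces a subspace $V \leq \F_q^n$ of codimension at most $C p \epsilon'^{-2} \log(2/\epsilon'\alpha)^2 \log(2/\alpha)$ such that $\norm{1_M*1_L(\cdot+t) - 1_M*1_L}_p \leq \epsilon'\abs{M}\abs{L}^{1/p}$ for every $t \in V$. For any $x$ and any such $t$ I would expand
\[ 1_A*1_M*1_L(x+t) - 1_A*1_M*1_L(x) = \sum_y 1_A(y)\bigl[(1_M*1_L)(x+t-y) - (1_M*1_L)(x-y)\bigr], \]
and apply Hölder with exponents $p, p'$ (together with the translation-invariance of $\norm{\cdot}_p$ under the substitution $z = x-y$) to obtain $\abs{1_A*1_M*1_L(x+t) - 1_A*1_M*1_L(x)} \leq \abs{A}^{1-1/p} \epsilon' \abs{M}\abs{L}^{1/p}$. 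To reach the target $\epsilon\abs{A}\abs{M}$, I only need $\epsilon' \leq \epsilon(\abs{A}/\abs{L})^{1/p}$; since $\abs{L} \leq q^n$ and $\abs{A} \geq \alpha q^n$, it suffices that $\epsilon' \leq \epsilon\alpha^{1/p}$. Taking $p = \lceil \log(2/\alpha)\rceil$ makes $\alpha^{1/p} \geq 1/2$, so $\epsilon' = \epsilon/2$ works, and substituting back yields the required codimension bound $d \ll \epsilon^{-2}\log(2/\epsilon\alpha)^2\log(2/\alpha)^2$. The ``in particular'' statement follows by averaging, since $V$ is a subgroup: $1_A*1_M*1_L*\mu_V(0) - 1_A*1_M*1_L(0)$ equals $\E_{t \in V}[1_A*1_M*1_L(-t) - 1_A*1_M*1_L(0)]$, and each summand is controlled by the uniform bound just established.

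The argument is essentially a textbook ``$L^p \to L^\infty$'' bootstrap, so there is no single hard step. The only delicate point is balancing $p$ against $\epsilon'$: the Hölder loss $(\abs{A}/\abs{L})^{1/p} \asymp \alpha^{1/p}$ forces $p \gtrsim \log(1/\alpha)$, while enlarging $p$ beyond this threshold costs an extra $\log(2/\alpha)$ factor in the codimension via the $Cp$-dependence of Theorem~\ref{thm:Lp}. The balanced choice $p \asymp \log(2/\alpha)$ is exactly what produces the $\log(2/\alpha)^2$ factor in the target estimate, indicating that this is the best $L^\infty$-a.p.\ bound one can extract by direct Hölder from Theorem~\ref{thm:Lp}.
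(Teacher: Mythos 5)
Your proof is correct and takes essentially the same route as the paper: apply the $L^p$-almost-periodicity of Theorem~\ref{thm:Lp} to an inner two-fold convolution, pull in the third indicator via Hölder with conjugate exponent, and choose $p \asymp \log(2/\alpha)$ to absorb the resulting $(|L|/|{\cdot}|)^{1/p}$ loss. The only (immaterial) difference is that the paper takes $1_A * 1_L$ as the inner convolution and pairs with $\norm{1_M}_{p/(p-1)}$, whereas you take $1_M * 1_L$ inner and pair with $\norm{1_A}_{p/(p-1)}$ — a symmetric choice, since $A$ and $M$ satisfy the same density hypothesis.
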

\begin{proof}
	Apply Theorem \ref{thm:Lp} with $p = C\log(2/\alpha)$ and $\epsilon/2$ to get a subspace $V$ of the required codimension such that
	\[ \norm{1_A*1_L(\cdot+t) - 1_A*1_L }_p \leq \tfrac{1}{2} \epsilon \abs{A} \abs{L}^{1/p}. \]
	Then, for $r$ with $1/r + 1/p = 1$, H\"older's inequality gives
\begin{align*}
\norm{ 1_A*1_M*1_L(\cdot + t) - 1_A*1_M*1_L }_\infty &\leq \norm{ 1_M }_r \norm{1_A*1_L(\cdot+t) - 1_A*1_L }_p \\
							    &\leq \tfrac{1}{2}\epsilon \abs{A} \abs{M} (\abs{L}/\abs{M})^{1/p}, \end{align*}
	whence the first claim is proved. The second follows from the triangle inequality.
\end{proof}

We now split into two cases, depending on whether the sumset $A+A$ is large or not.

\subsubsection*{Large sumset}
In the large sumset case, where $\mu(A+A) \geq \tfrac{1}{2}$, we shall make use of the fact that $1_{-3\cdot A}*1_A*1_{A+A}(0) = \abs{A}$ if $A$ is free from solutions to \eqref{eqn:main}, which means that the convolution $1_{-3\cdot A}*1_A*1_{(A+A)^c}$ takes a really large value. Though perhaps not clear in this formulation, this argument was inspired by those of \cite{ernie-me:Roth, ernie-me:Bogolyubov-Roth}.

\begin{proposition}\label{prop:noSolsLargeSumsetImpliesInc}
	Let $A \subset \F_q^n$ have density $\alpha$ and size at least $8$. Suppose $\mu(A+A) \geq \tfrac{1}{2}$ and that $A$ does not contain any non-trivial solutions to \eqref{eqn:main}. Then there is a subspace $V$ of codimension at most $C \log(2/\alpha)^4$ such that $1_A*\mu_V(x) \geq \frac{3}{2} \alpha$ for some $x$.
\end{proposition}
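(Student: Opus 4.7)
The starting point is the identity
\[
(1_{-3 \cdot A} * 1_A * 1_{A+A})(0) = |A|,
\]
which follows from the solution-freeness of $A$: if $3w - a \in A+A$ with $w, a \in A$, then $a + b + c = 3w$ for some $b, c \in A$, and by hypothesis $a = b = c = w$, so each $w \in A$ contributes exactly one pair. Combined with the trivial identity $\sum_s (1_{-3 \cdot A} * 1_A)(s) = |A|^2$ this forces
\[
(1_{-3 \cdot A} * 1_A * 1_{(A+A)^c})(0) = |A|^2 - |A|,
\]
essentially the combinatorial maximum of the complementary convolution.

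I would then apply Theorem \ref{thm:Linfty-ap_FF} with $L = (A+A)^c$ and a small absolute constant $\epsilon$ (to be fixed at the end) to produce a subspace $V$ of codimension at most $C\log(2/\alpha)^4$ on which the three-fold convolution is $L^\infty$-approximated by its $\mu_V$-smoothing at distance at most $\epsilon|A|^2$. Moving $\mu_V$ onto $1_{(A+A)^c}$ and passing to complements yields the key estimate
\[
\sum_{w, a \in A} (1_{A+A} * \mu_V)(3w - a) \leq \delta\, |A|^2, \qquad \delta := \epsilon + \tfrac{1}{|A|}.
\]
In words, for \emph{most} pairs $(w, a) \in A^2$ the coset $3w - a + V$ contains $A+A$ only very sparsely, whereas globally $\mu(A+A) \geq \tfrac12$; this structural tension is what will be leveraged.

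The final step is a coset pigeonhole in the quotient $K = G/V$. Writing $d_c = |(A+A)\cap c|/|V|$ and $n_c = \#\{(w, a) \in A^2 : [3w - a] = c\}$, the above estimate reads $\sum_c n_c d_c \leq \delta|A|^2$, with $\sum_c n_c = |A|^2$ and $\sum_c d_c \geq |K|/2$. I split $K = C \sqcup D$ at threshold $\eta$ (sparse cosets $C = \{d_c \leq \eta\}$, dense cosets $D$). Using $d_c \leq 1$ on $D$ and $d_c \leq \eta$ on $C$ gives $|C| \leq |K|/(2(1 - \eta))$, while the key inequality gives $\sum_{c \in C} n_c \geq (1 - \delta/\eta)|A|^2$. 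Averaging over $w \in A$ produces some $w$ for which at least a $(1 - \delta/\eta)$-fraction of $A$ lies in $\pi^{-1}(3[w] - C)$, a union of at most $|K|/(2(1-\eta))$ cosets of $V$. Pigeonholing on those cosets yields one on which $A$ has density at least $2(1 - \delta/\eta)(1 - \eta)\alpha$; optimizing at $\eta = \sqrt\delta$ gives $2(1 - \sqrt\delta)^2 \alpha$, which exceeds $\tfrac32\alpha$ as soon as $\delta$ is below a small absolute constant (around $0.018$). Fixing $\epsilon$ suitably secures this; the remaining small-$|A|$ case from the hypothesis $|A| \geq 8$ is essentially trivial, since $\mu(A+A) \geq \tfrac12$ then forces $|\F_q^n|$ to be bounded and one can take $V = \{0\}$.

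The main obstacle is this last combinatorial step. Almost-periodicity only provides good \emph{average} information about the density of $A+A$ on cosets meeting $3A - A$, and converting this into an increment on $A$ itself is not automatic. The crucial observation is that $A+A$ cannot be both globally dense and sparse on too many cosets of $V$, so the set of sparse cosets is a constant factor smaller than $|K|$; the pigeonhole then forces $A$ onto this small collection, producing the factor $2(1 - \sqrt\delta)^2$ which -- critically -- is bounded away from $1$ by an absolute constant and so allows us to demand the strong increment $\tfrac32 \alpha$ rather than something of the form $(1 + o(1))\alpha$.
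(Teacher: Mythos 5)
Your proof starts the same way as the paper's (the identity $1_{-3\cdot A}*1_A*1_{A+A}(0) = \abs{A}$, then $L^\infty$-almost-periodicity of the three-fold convolution, obtaining $1_{-3\cdot A}*1_A*1_{A+A}*\mu_V(0) \leq \delta\abs{A}^2$ for a small $\delta$), but the final combinatorial step is genuinely different. The paper passes to the complement and applies a single H\"older-type bound:
\[ \tfrac34\abs{A}^2 \leq 1_{-3\cdot A}*1_A*1_{(A+A)^c}*\mu_V(0) \leq \abs{A}\,\abs{(A+A)^c}\,\norm{1_A*\mu_V}_\infty \leq \tfrac12\abs{A}\abs{G}\norm{1_A*\mu_V}_\infty, \]
which immediately forces $\norm{1_A*\mu_V}_\infty \geq \tfrac32\alpha$. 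Your route instead passes to the quotient $K = G/V$, splits cosets at a threshold $\eta$ by the density of $A+A$ on them, and runs an averaging-then-pigeonhole argument in $w$; this is correct but considerably more elaborate, and it carries a cost: you need $\delta = \epsilon + 1/\abs{A}$ below roughly $0.018$, whereas the hypothesis $\abs{A}\geq 8$ only guarantees $1/\abs{A}\leq 0.125$. So your main argument only kicks in for $\abs{A}\gtrsim 60$, not $\abs{A}\geq 8$, and the small-$\abs{A}$ fallback is less trivial than you suggest. In particular, taking $V=\{0\}$ only gives $1_A*\mu_V(x)=1$, which fails to satisfy $\geq\tfrac32\alpha$ when $\alpha>2/3$; you must separately observe that $\alpha>2/3$ together with $\abs{A}\geq 2$ already forces a non-trivial solution (for any distinct $w,z\in A$, $A\cap(3w-z-A)\neq\emptyset$ by density), so that the proposition is vacuous there. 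You also need to verify the codimension constraint $n\leq C\log(2/\alpha)^4$ in this regime, which works out only because $\mu(A+A)\geq\tfrac12$ bounds $\abs{G}\leq 2\abs{A}^2$ and hence $n$, while $\alpha\geq 1/(2\abs{A})$ bounds $\log(2/\alpha)$ from below. None of this is fatal, but the paper's one-line $L^1$-$L^\infty$ bound is both cleaner and stronger, handling $\abs{A}\geq 8$ uniformly. It's worth internalizing that slicker step, as it recurs throughout Sections \ref{section:densInc}--\ref{section:iteration}.
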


Recall that $1_A*\mu_V(x) = \abs{A \cap (x-V)}/\abs{V}$, and so the conclusion says that $A$ has massively increased density on some affine subspace of low codimension.

\begin{proof}
	Apply Theorem \ref{thm:Linfty-ap_FF} with $M=-3\cdot A$, $L=A+A$ and $\epsilon = 1/8$ to get a subspace $V$ of the required codimension such that
	\[ 1_{-3 \cdot A}*1_A*1_{A+A}*\mu_V(0) \leq 1_{-3\cdot A}*1_A*1_{A+A}(0) + \tfrac{1}{8}\abs{A}^2. \]
	Since $1_{-3\cdot A}*1_A*1_{A+A}(0) = \abs{A} \leq \abs{A}^2/8$, we thus have 
	\[ 1_{-3 \cdot A}*1_A*1_{A+A}*\mu_V(0) \leq \tfrac{1}{4} \abs{A}^2, \]
	and so
	\[ 1_{-3 \cdot A}*1_A*1_{(A+A)^c}*\mu_V(0) \geq \tfrac{3}{4} \abs{A}^2. \]
	The left-hand side here is at most $\abs{A} \abs{(A+A)^c} \norm{1_A*\mu_V}_\infty$, and so we are done.
\end{proof}

\subsubsection*{Small sumset}
That one can obtain a good density increment for $A$ when $A+A$ is small is well known, and a result almost sufficing for our purposes is contained in \cite{sanders:bogolyubov} -- see e.g. Theorem 9.1 there. We shall however use the following.

\begin{proposition}\label{prop:sumsetImpliesInc}
Suppose $A \subset \F_q^n$ has density $\alpha$ and $\mu(A+A) \leq \frac{1}{2}$. Then there is a subspace $V$ of codimension at most $C (\log 2/\alpha)^4$ such that $1_A*\mu_V(x) \geq \frac{3}{2} \alpha$ for some $x$.
\end{proposition}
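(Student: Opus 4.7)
The plan is to mimic the structure of Proposition \ref{prop:noSolsLargeSumsetImpliesInc}, replacing the no-solutions hypothesis used there with the small-sumset hypothesis here. The starting observation is that, since $1_A * 1_A$ is supported on $A+A$, one has the vanishing identity
\[
1_A * 1_A * 1_{-(A+A)^c}(0) \;=\; \sum_y (1_A*1_A)(y)\, 1_{(A+A)^c}(y) \;=\; 0.
\]
I would then invoke the $L^\infty$-almost-periodicity of Theorem \ref{thm:Linfty-ap_FF} to argue that this value cannot change by much after averaging the last argument against $\mu_V$, which in turn forces the bulk of the $L^1$-mass of $1_A * 1_A * \mu_V$ to concentrate on the small set $A+A$.

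Concretely, apply Theorem \ref{thm:Linfty-ap_FF} with $M = A$, $L = -(A+A)^c$ and $\epsilon = \tfrac{1}{4}$ to get a subspace $V$ of codimension at most $C \log(2/\alpha)^4$ such that
\[
\abs{\, 1_A * 1_A * 1_{-(A+A)^c} * \mu_V(0) \;-\; 1_A * 1_A * 1_{-(A+A)^c}(0)\,} \;\leq\; \tfrac{1}{4}\abs{A}^2.
\]
Combined with the vanishing above, this rearranges to
\[
\sum_{y \in (A+A)^c} (1_A * 1_A * \mu_V)(y) \;\leq\; \tfrac{1}{4}\abs{A}^2,
\]
so at least $\tfrac{3}{4}\abs{A}^2$ of the mass of $1_A * 1_A * \mu_V$ (whose total is $\abs{A}^2$) lies on $A+A$. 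Invoking $\abs{A+A} \leq \abs{G}/2$, a simple averaging produces some $z \in A+A$ with $(1_A * 1_A * \mu_V)(z) \geq \tfrac{3}{2}\alpha\abs{A}$.

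To finish, I would expand
\[
(1_A * 1_A * \mu_V)(z) \;=\; \sum_{a \in A}(1_A * \mu_V)(z-a) \;\leq\; \abs{A}\,\max_{a \in A}(1_A * \mu_V)(z-a),
\]
from which some $a \in A$ must satisfy $(1_A * \mu_V)(z-a) \geq \tfrac{3}{2}\alpha$, which is the required density increment at $x = z - a$. I do not anticipate any serious obstacle: the only real bookkeeping is the choice of sign on $L$, set up so that $1_A * 1_A * 1_L(0)$ genuinely records the mass of $1_A * 1_A$ on $(A+A)^c$ and not on its reflection, and the pigeonhole in the penultimate display hinges directly on the hypothesis $\mu(A+A) \leq \tfrac{1}{2}$; any fixed constant below $\tfrac{1}{2}$ could replace $\tfrac{1}{4}$ with a corresponding adjustment of the increment factor.
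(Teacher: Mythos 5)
Your proof is correct and essentially the same as the paper's. The only stylistic difference is that you work with $L = -(A+A)^c$ and the vanishing identity $1_A*1_A*1_{-(A+A)^c}(0) = 0$, whereas the paper applies Theorem~\ref{thm:Linfty-ap_FF} with $L = -(A+A)$ and the full-mass identity $1_A*1_A*1_{-(A+A)}(0) = \abs{A}^2$; these are complementary formulations of the support of $1_A*1_A$, and your two-step pigeonhole (first a $z \in A+A$, then an $a \in A$) unpacks the paper's one-line bound $1_A*1_A*1_{-(A+A)}*\mu_V(0) \leq \abs{A}\abs{A+A}\norm{1_A*\mu_V}_\infty$.
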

\begin{proof}
	Apply Theorem \ref{thm:Linfty-ap_FF} with $M=A$, $L=-(A+A)$ and $\epsilon = 1/4$ to get a subspace $V$ of the required codimension such that
	\[ \abs{ 1_A*1_A*1_{-(A+A)}(0) - 1_A*1_A*1_{-(A+A)}*\mu_V(0) } \leq \tfrac{1}{4} \abs{A}^2. \]
	But $1_A*1_A*1_{-(A+A)}(0) = \abs{A}^2$ since $1_A*1_A$ is supported on $A+A$, and so
	\[ 1_A*1_A*1_{-(A+A)}*\mu_V(0) \geq \tfrac{3}{4}\abs{A}^2. \]
	Since the left-hand side here is at most $\abs{A} \abs{A+A} \norm{1_A*\mu_V}_\infty$, the result follows.
\end{proof}

Note that we did not need to assume that $A$ was free of solutions to any equations here.

\subsubsection*{Completing the proof: iterating}

Combining these propositions, one immediately obtains
\begin{corollary}\label{cor:densIncFF}
	Let $A \subset \F_q^n$ have density $\alpha$ and size at least $8$. Suppose $A$ does not contain any non-trivial solutions to \eqref{eqn:main}. Then there is a subspace of codimension at most $C \log(2/\alpha)^4$ such that $1_A*\mu_V(x) \geq \frac{3}{2} \alpha$ for some $x$.
\end{corollary}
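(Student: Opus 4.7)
The plan is essentially trivial: the corollary is a case split that marries the two propositions established just above, so there is very little to do beyond book-keeping. Let me describe the steps I would take.

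First, I would observe that Proposition \ref{prop:noSolsLargeSumsetImpliesInc} and Proposition \ref{prop:sumsetImpliesInc} both produce a subspace $V$ of codimension at most $C\log(2/\alpha)^4$ on which $A$ achieves density at least $\tfrac32\alpha$ on some translate, but under complementary hypotheses on the size of $A+A$. The former requires $\mu(A+A) \geq \tfrac12$ (and uses the solution-freeness hypothesis to get $1_{-3\cdot A}*1_A*1_{A+A}(0)=\abs{A}$), while the latter requires $\mu(A+A) \leq \tfrac12$ (and does not need any solution-freeness). Together these hypotheses exhaust all possibilities for $A$, so one simply splits cases on $\mu(A+A)$ vs.\ $\tfrac12$.

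Next, I would check that the hypotheses carry over. In both cases we need $A$ to have density $\alpha$, which is given. Proposition \ref{prop:noSolsLargeSumsetImpliesInc} additionally requires $\abs{A} \geq 8$ and that $A$ is free of non-trivial solutions to \eqref{eqn:main}, both of which are assumed in the corollary. Proposition \ref{prop:sumsetImpliesInc} requires no additional assumptions beyond the sumset bound.

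So my proof proposal is: if $\mu(A+A) \geq \tfrac12$ apply Proposition \ref{prop:noSolsLargeSumsetImpliesInc}; otherwise $\mu(A+A) < \tfrac12$ and we apply Proposition \ref{prop:sumsetImpliesInc}. Either way we obtain a subspace $V$ of codimension at most $C\log(2/\alpha)^4$ and an $x$ with $1_A*\mu_V(x) \geq \tfrac32\alpha$, as required. There is no real obstacle here: the two propositions were set up precisely to be combined in this dichotomy, and the only mild point to note is the uniform form of the codimension bound, which is already the same in both propositions.
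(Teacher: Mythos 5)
Your proof is correct and is precisely what the paper intends: the two propositions were stated with complementary hypotheses $\mu(A+A)\geq\tfrac12$ and $\mu(A+A)\leq\tfrac12$, and the paper simply says ``Combining these propositions, one immediately obtains'' the corollary. Your case split and hypothesis-checking match the intended argument exactly.
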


We now simply iterate this corollary to complete the proof.

\begin{proof}[Proof of Theorem \ref{thm:finFieldRoth}]
	If $A \subset G := \F_q^n$ has density $\alpha$, size at least $8$ and is free of non-trivial solutions to \eqref{eqn:main}, then Corollary \ref{cor:densIncFF} gives us a subspace $V \leq G$ of codimension at most $C \log(2/\alpha)^4$ and an element $x \in G$ for which
	\[ \abs{(A-x) \cap V} \geq \tfrac{3}{2}\alpha \abs{V}, \]
	that is, a subspace in which $A-x$ has density at least $\tfrac{3}{2}\alpha$. Note that $A-x$ is still free of non-trivial solutions to \eqref{eqn:main} by translation invariance. We then repeat this argument with $G$ replaced by $V$, and so on, obtaining solution-free sets of increasing densities $\alpha_j$ in spaces of lowering dimension $n_j$, with $\alpha_1 = \alpha$ and $n_1 = n$. Assuming $\alpha_j \geq 8 q^{-n_j}$ at each stage, we thus have
	        \[ n_{j+1} \geq n_j - C \log(2/\alpha)^4 \geq n - C j \log(2/\alpha)^4, \]
		and
		\[ \alpha_{j+1} \geq \tfrac{3}{2} \alpha_j \geq \left(\tfrac{3}{2}\right)^j \alpha. \]
		Since the density cannot increase beyond $1$, this process must terminate with some $j \leq C \log(2/\alpha)$. If the claimed bound $\alpha \leq \exp\left(-c n^{1/5} \right)$ does not hold then we have $n_j \geq n - C  j \log(2/\alpha)^4 \geq n/2$ by the time of termination, and so running out of dimensions is not a reason for the process to terminate. Thus we must have $\alpha_j < 8 q^{-n_j}$. But this is easily seen to imply the claimed bound anyway, and we are done.
\end{proof}

Before we go on to give our second proof, let us make a quick remark about the types of solutions we have considered.

\begin{remark}
In the statement of Theorem \ref{thm:finFieldRoth} we forbade all non-trivial solutions to \eqref{eqn:main} in $A$, these being any non-constant quadruples $(x,y,z,w)$ for which $x+y+z=3w$. This has the effect of forbidding $A$ from containing solutions to certain other equations as well, such as non-trivial three-term arithmetic progressions -- if $x,y,z$ are distinct and lie in arithmetic progression, then the quadruple $(x,y,z,y)$ solves our equation. Though we did not pursue this issue above for the sake of clarity of exposition, let us mention that incorporating a short additional argument in fact shows that the same bound holds if one only disallows solutions where all the variables are distinct, so that one is only disallowing solutions to this equation and not any `sub-equations'. 
\end{remark}

\subsection*{Second proof: via properties of three-fold sumsets}\hfill

The following property of three-fold sumsets encodes the key to this proof.

\begin{proposition}\label{prop:BAA}
	Let $\eta \in (0,1)$ and let $A, B \subset \F_q^n$ be sets of densities $\alpha$, $\beta$ respectively. Then there is a subspace $V$ of codimension at most $C\log(2/\eta \beta) \log(2/\alpha)^3$ and a set $X \subset B$ with $\abs{X} \geq 0.99 \abs{B}$ such that
	\[ \abs{(x+V) \cap (B+A-A)} \geq (1-\eta)\abs{V} \]
	for every $x \in X$.
\end{proposition}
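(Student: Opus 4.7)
The plan is to reformulate the condition $b + v \in B + A - A$ as the positivity of a single convolution and then control that convolution via $L^p$-almost-periodicity. Define $\psi := 1_{-A} * 1_{B+A}$, so that $\psi(y) = \abs{\{a \in A : y + a \in B + A\}}$. Then $y \in B + A - A$ if and only if $\psi(y) > 0$ (the element witnessing $y \in B+A-A$ gives some $a \in A$ with $y + a \in B+A$, and conversely). Moreover, for $y = b \in B$ every $a \in A$ satisfies $b + a \in B + A$, so $\psi(b) = \abs{A}$ exactly. It thus suffices to find $V$ such that $\psi$ does not drop from $\abs{A}$ at $b$ to $0$ at $b + v$ for too many pairs $(b, v) \in B \times V$.

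I would apply Theorem \ref{thm:Lp} to the pair $(-A, B+A)$ with $\epsilon$ a small absolute constant (say $\epsilon = 1/4$) and $p := \lceil C \log(2/\eta \beta) \rceil$, producing a subspace $V$ of codimension at most
\[ Cp\epsilon^{-2} \log(2/\epsilon \alpha)^2 \log(2/\alpha) \leq C \log(2/\eta \beta) \log(2/\alpha)^3, \]
satisfying $\norm{\psi(\cdot + t) - \psi}_p \leq \epsilon \abs{A} \abs{B + A}^{1/p}$ for every $t \in V$. For each such $t$, any $b \in B$ with $\psi(b+t) = 0$ contributes $\abs{A}^p$ to $\norm{\psi(\cdot + t) - \psi}_p^p$, so Markov gives $\abs{\{b \in B : \psi(b+t) = 0\}} \leq \epsilon^p \abs{B+A} \leq \epsilon^p q^n$. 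Summing over $t \in V$ and using a straightforward averaging/double count, I would extract $X \subset B$ with $\abs{X} \geq 0.99 \abs{B}$ such that for each $b \in X$ at most $\eta \abs{V}$ elements $t \in V$ satisfy $\psi(b+t) = 0$; this reduces to $\epsilon^p q^n \leq c \eta \beta q^n$, which is ensured by our choice of $p$.

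The main point of cleverness is the factoring $\psi = 1_{-A} * 1_{B+A}$ rather than, say, $\psi' = 1_B * 1_A * 1_{-A}$ (a strictly larger function with the same support $B+A-A$). Applying almost-periodicity with the single compound set $L = B + A$ lets us absorb $\abs{L}$ only through the trivial bound $\abs{B+A} \leq q^n$, paid for by the modest factor $p \sim \log(2/\eta \beta)$; the alternative factorings, combined with Young's inequality, would force $\epsilon \lesssim 1/\abs{A}$ and render the codimension of $V$ polynomial in $n$. Beyond this bookkeeping, the argument is routine and I do not anticipate further obstacles.
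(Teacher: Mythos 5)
Your proof is correct and takes essentially the same approach as the paper: the paper applies Theorem \ref{thm:Lp} with $L = B-A$ and the two-fold convolution $1_A*1_{B-A}$ (whereas you use the equivalent reparametrisation $1_{-A}*1_{B+A}$), exploits the identity that this convolution equals $\abs{A}$ on $B$, and then double-counts zeros of the translated convolution over $(b,t) \in B \times V$ with $p \sim \log(2/\eta\beta)$ to pass from the $L^p$ bound to the claimed conclusion.
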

Another way of putting the conclusion is that $1_{B+A-A}*\mu_V(x) \geq 1-\eta$ for each $x \in X$.
\begin{proof}
	Apply Theorem \ref{thm:Lp} with $p = C\log(2/\eta \beta)$, $\epsilon = 1/2$ and $L = B-A$ to get a subspace $V$ of the required codimension such that
	\[ \norm{ 1_A*1_{B-A}(\cdot + t) - 1_A*1_{B-A} }_p \leq \tfrac{1}{2} \abs{A} \abs{B-A}^{1/p} \]
	for each $t \in V$.

	Let $X$ consist of all $x \in B$ such that $\abs{(x+V) \cap (B+A-A)} \geq (1-\eta)\abs{V}$, so that if $x \notin X$ then $1_A*1_{B-A}(x+t) = 0$ for more than $\eta \abs{V}$ elements $t \in V$. Then
	\[ \eta \abs{V} \sum_{x \in B\setminus X} 1_A*1_{B-A}(x)^p < \sum_{t \in V} \norm{ 1_A*1_{B-A}(\cdot + t) - 1_A*1_{B-A} }_p^p \leq \tfrac{1}{2^p} \abs{A}^p \abs{B-A} \abs{V}. \]
	But $1_A*1_{B-A}(x) = \abs{A}$ for each $x \in B$, and so this implies that
	\[ \abs{B \setminus X} < \tfrac{1}{2^p} \eta^{-1} \abs{B-A} \leq 0.01 \abs{B}, \]
	which completes the proof.
\end{proof}

\begin{corollary}\label{cor:ABC}
	Let $\eta \in (0,1)$ and let $A, B, C \subset \F_q^n$ have $\mu(A), \mu(C) \geq \alpha$ and $\mu(B) \geq \beta$. Then there is a subspace $V$ of codimension at most $C\log(2/\eta \beta) \log(2/\alpha)^3$, an element $t \in \F_q^n$ and a set $X \subset B+t$ with $\abs{X} \geq 0.99 \abs{B}$ such that
	\[ \abs{(x+V) \cap (A+B+C)} \geq (1-\eta)\abs{V} \]
	for every $x \in X$.
\end{corollary}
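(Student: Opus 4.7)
The plan is to reduce the three-set statement to the two-set statement of Proposition \ref{prop:BAA} by absorbing $A + C$ into an expression of the form $A' - A'$ for a suitable subset $A'$ of $A$.

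First, I would select $t \in \F_q^n$ by an averaging argument: since
\[ \sum_t \abs{A \cap (t - C)} = \abs{A}\abs{C}, \]
there is some $t$ with $\abs{A \cap (t-C)} \geq \alpha \mu(C) q^n \geq \alpha^2 q^n$. Define $A' := A \cap (t-C)$, so $\mu(A') \geq \alpha^2$. The point of this choice is the inclusion $t + A' - A' \subset A + C$: if $a_1, a_2 \in A'$ then $a_1 \in A$ and $t - a_2 \in C$, whence $a_1 + (t-a_2) = t + a_1 - a_2 \in A + C$.

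Second, I would apply Proposition \ref{prop:BAA} with the roles $A \mapsto A'$ and $B \mapsto B+t$, using the density $\alpha^2$ for $A'$ and $\beta$ for $B+t$. This yields a subspace $V$ of codimension at most
\[ C \log(2/\eta\beta) \log(2/\alpha^2)^3 \leq C' \log(2/\eta\beta)\log(2/\alpha)^3, \]
together with a set $X \subset B+t$ of size at least $0.99\abs{B}$ such that $\abs{(x+V) \cap ((B+t) + A' - A')} \geq (1-\eta)\abs{V}$ for every $x \in X$.

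Third, I would conclude using $(B+t) + A' - A' = B + (t + A' - A') \subset B + A + C = A + B + C$, so the lower bound automatically transfers to $\abs{(x+V) \cap (A+B+C)} \geq (1-\eta)\abs{V}$. There is no real obstacle here; the only thing to check carefully is that losing a quadratic factor in the density of $A'$ only costs a constant factor in the codimension, which follows from $\log(2/\alpha^2) \leq 2\log(2/\alpha)$ and gets absorbed into $C$.
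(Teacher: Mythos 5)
Your proof is correct and matches the paper's own argument: both select $t$ by averaging $1_A*1_C$, pass to $A' := A \cap (t-C)$ with $\mu(A') \geq \alpha^2$, and then invoke Proposition \ref{prop:BAA}. The only cosmetic difference is that you feed $B+t$ directly into the proposition rather than translating the output set afterwards; the codimension bookkeeping via $\log(2/\alpha^2) \leq 2\log(2/\alpha)$ is exactly what the paper leaves implicit.
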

\begin{proof}
	Since $\sum_t 1_A*1_C(t) = \abs{A}\abs{C}$, there is some $t$ such that $\mu(A \cap (t-C)) \geq \alpha^2$. Applying Proposition \ref{prop:BAA} with this intersection instead of $A$ completes the proof.
\end{proof}

Using this we give a second proof of Corollary \ref{cor:densIncFF}, finding a good density increment.

\begin{proof}[Second proof of Corollary \ref{cor:densIncFF}]
	Partition $A = A_1 \cup A_2$ with $\abs{A_1} = \ceiling{ \frac{4}{5} \abs{A} }$ and apply Corollary \ref{cor:ABC} with $\eta := \alpha/2$, $B = C = -A_1$ and $3\cdot A_2$ in place of $A$. This gives us a subspace $V$ of codimension at most $C\log(2/\alpha)^4$, an element $t$ and a set $X \subset t-A$ with $\abs{X} \geq \frac{3}{4} \abs{A}$ such that
	\[ \abs{(x+V) \cap (3\cdot A_2 - A_1 - A_1)} \geq (1-\eta)\abs{V} \quad \text{for each $x \in X$.} \]
	Since $A$ does not contain any non-trivial solutions to \eqref{eqn:main}, $A$ and $3\cdot A_2 - A_1 - A_1$ are disjoint, whence
\begin{equation}
	\abs{(x+V) \cap A} \leq \tfrac{1}{2} \alpha \abs{V} \quad \text{for each $x \in X$.} \label{eqn:smallInt}
\end{equation}
	Since $V$ is a subspace, this in fact holds for all $x \in X+V$. How large is this sumset? Well, if $1_X*\mu_V(x) \geq \tfrac{3}{2} \alpha$ for some $x$, then we would have a density increment of the kind we are after, so let us assume that $1_X*\mu_V(x) < \tfrac{3}{2}\alpha$ for all $x$. Then
	\[ \abs{X} = \sum_{x \in X+V} 1_X*\mu_V(x) < \tfrac{3}{2}\alpha \abs{X+V}, \]
	and so \eqref{eqn:smallInt} holds for at least $\abs{X+V} \geq \tfrac{1}{2} \abs{G}$ elements $x$. In other words, $1_A*\mu_V(x) \leq \frac{1}{2}\alpha$ for at least half of the elements of the group. Since the average of this function over the whole group is $\alpha$, we must have $1_A*\mu_V(x) \geq \frac{3}{2}\alpha$ for some $x$, and so we are done.
\end{proof}

Since Theorem \ref{thm:finFieldRoth} followed directly from Corollary \ref{cor:densIncFF}, this completes the proof.

\subsection*{Extending the arguments}
Both of these proofs of Theorem \ref{thm:finFieldRoth} can be extended to handle the case of sets of integers using the machinery of regular Bohr sets pioneered by Bourgain \cite{bourgain:roth}, each with their own sets of difficulties. It turns out this process is more straightforward for the first proof, however, and so it is this that we shall present, starting in the next section with a review of the basic theory surrounding Bohr sets. The second proof is however very much related to the proofs we shall give for the results on structures in sums of sparse sets, as should become apparent.

\section{Bohr sets and their elementary properties}\label{section:Bohr}
When one wants to perform a density increment argument of the type we have just used in groups without a rich subgroup structure, it is by now rather established practice to turn to Bohr sets as a natural substitute for subspaces. In an abelian group $G$, we define these in terms of the dual group $\Ghat$ of characters, consisting of homomorphisms from $G$ to $\C^\times$ with the group operation given by pointwise multiplication.

\begin{defn}
	Let $\Gamma \subset \Ghat$ and let $\rho \geq 0$. We define the \emph{Bohr set} on these data by
	\[ \Bohr(\Gamma, \rho) = \{ x \in G : \abs{\gamma(x) - 1} \leq \rho \text{ for all $\gamma \in \Gamma$}\}. \]
	We refer to $\abs{\Gamma}$ as the \emph{rank} of the Bohr set, and $\rho$ as its \emph{radius}\footnote{Note that these quantities are not well-defined in terms of just the set itself, but we think of these data as being included in the definition of the Bohr set.}. We say that $\Bohr(\Gamma', \rho') \leq \Bohr(\Gamma, \rho)$ is a \emph{sub-Bohr set} if $\Gamma' \supset \Gamma$ and $\rho' \leq \rho$; note in particular that this implies containment as sets. We shall frequently need to scale the radii of our Bohr sets: if $B = \Bohr(\Gamma, \rho)$ and $\delta \geq 0$, then we write $B_\delta = \Bohr(\Gamma, \delta \rho)$.
\end{defn}

We refer the reader to Section 4.4 in the book \cite{TV} of Tao and Vu for the proofs of the following lemmas and for more background\footnote{The constants appearing here are somewhat different to those in \cite{TV}, as we have defined Bohr sets in terms of quantities of the form $\abs{z-1}$ rather than $\arg(z)$.}.

\begin{lemma}\label{lemma:BohrGrowth}
	Let $\Gamma \subset \Ghat$ be a set of $d$ characters, let $\rho \in [0,2]$, and let $B = \Bohr(\Gamma, \rho)$. Then we have the size estimate
	\[ \abs{B} \geq (\rho/2\pi)^d \abs{G} \]
	the doubling estimate
	\[ \abs{B_2} \leq 6^d \abs{B}, \]
	and, for $\delta \in [0,1]$, the decay estimate
	\[ \abs{B_\delta} \geq (\delta/2)^{3d} \abs{B}. \]
\end{lemma}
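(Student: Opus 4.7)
The plan is to prove all three estimates via pigeonhole/covering arguments applied to the image of $G$ in the torus $(S^1)^d$ under the map $\phi : G \to (S^1)^d$ given by $\phi(x) = (\gamma(x))_{\gamma \in \Gamma}$, exploiting the fact that $B$ is the preimage under $\phi$ of a product of arcs near $1$.

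For the \textbf{size estimate}, I would partition each copy of $S^1$ into $\lceil 2\pi/\rho \rceil$ arcs of chord-diameter at most $\rho$, giving a partition of $(S^1)^d$ into at most $(2\pi/\rho)^d$ boxes. By pigeonhole, some box contains $\phi(x)$ for at least $\abs{G}(\rho/2\pi)^d$ elements $x$. Whenever $\phi(x)$ and $\phi(y)$ lie in the same box we have $\abs{\gamma(x)-\gamma(y)}\leq\rho$ for every $\gamma\in\Gamma$, and since $\abs{\gamma(y)}=1$ this gives $\abs{\gamma(x-y)-1}\leq\rho$, so $x-y\in B$. Fixing one $x_0$ in such a box and translating by $-x_0$ gives an injection from the box into $B$, proving the bound.

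For the \textbf{doubling estimate}, I would first record the elementary ``triangle inequality'' that $B_\alpha+B_\beta\subset B_{\alpha+\beta}$ whenever $\alpha+\beta\leq 2$ (so that the character chord estimates still make sense), since $\abs{\gamma(x+y)-1}\leq\abs{\gamma(x)}\abs{\gamma(y)-1}+\abs{\gamma(x)-1}$. Then I would take a maximal set $X\subset B_2$ such that the translates $x+B_{1/2}$, $x\in X$, are pairwise disjoint. Maximality forces $B_2\subset X+(B_{1/2}-B_{1/2})\subset X+B$, so $\abs{B_2}\leq\abs{X}\abs{B}$. It remains to show $\abs{X}\leq 6^d$, and this again comes from a torus covering: the image $\phi(X)\subset\phi(B_2)$ lies in a product of arcs around $1$ of chord-length $\leq 2\rho$; cover each such arc by six arcs of chord-length $\leq \rho/2$; by construction no two elements of $X$ can lie in the same product cell, for then their difference would be in $B_{1/2}\subset B$, contradicting disjointness of the $x+B_{1/2}$.

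For the \textbf{decay estimate}, I would dualise the covering: cover the image $\phi(B)$ in $(S^1)^d$ by boxes in which any two points differ by at most $\delta\rho$ in each coordinate, so that elements $x,y$ of $G$ landing in a common box satisfy $x-y\in B_\delta$. Each arc of chord-length $\leq\rho$ can be covered by at most $(2/\delta)^3$ arcs of chord-length $\leq\delta\rho$ (the cubed factor arises from combining the covering count with the loss going from chord to arc-length, which is cleanest to handle by taking equally spaced arcs and using $2\sin\theta\leq 2\theta$ on one side and $2\theta\leq\pi\sin\theta$ on the other). This gives a cover by $(2/\delta)^{3d}$ boxes, and the same pigeonhole/translation trick as in the size estimate produces the bound $\abs{B}\leq(2/\delta)^{3d}\abs{B_\delta}$.

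The main obstacle is constant-wrangling in the torus covering arguments, particularly for the decay estimate, where the $3d$ exponent reflects a not-entirely-sharp conversion between chord and arc length on $S^1$. I would handle this cleanly by choosing explicit evenly-spaced arc partitions and verifying the counts directly with elementary trigonometric inequalities, rather than trying to optimise; the conceptual content is simply that Bohr sets behave like boxes in $(S^1)^d$ and the three estimates are box estimates pulled back through $\phi$.
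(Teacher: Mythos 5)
The paper does not give its own proof of this lemma; it explicitly refers the reader to Section 4.4 of Tao and Vu for the proofs, noting only that the constants differ because of the $\abs{z-1}$ normalisation. Your approach — viewing $B$ as the preimage of a box in $(S^1)^d$ and running pigeonhole and covering arguments there — is indeed the standard one used in that reference, so conceptually you are on the right track. But since the lemma states explicit constants and you aim to prove them, the constant-wrangling you defer is actually where the issues lie, and two of your steps do not deliver the bounds claimed.

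For the size estimate, partitioning $S^1$ into $\ceiling{2\pi/\rho}$ arcs gives $\ceiling{2\pi/\rho}^d$ boxes, which is \emph{at least} $(2\pi/\rho)^d$, not at most; pigeonhole then only yields $\abs{B}\geq \abs{G}/\ceiling{2\pi/\rho}^d$, which is weaker than $(\rho/2\pi)^d\abs{G}$ whenever $2\pi/\rho$ is not an integer. The clean fix, and what Tao--Vu do, is to average over a uniformly random rotation of the partition: a random box of arc-length $2\arcsin(\rho/2)\geq\rho$ in each coordinate catches, in expectation, a fraction $(2\arcsin(\rho/2)/2\pi)^d\geq(\rho/2\pi)^d$ of $G$, so some shift does at least as well, and then your translation argument gives the stated bound with no ceiling loss. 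For the doubling estimate, the covering count is simply wrong: the arc $\{z:\abs{z-1}\leq 2\rho\}$ has arc-length $4\arcsin\rho$ (or $2\pi$ once $\rho\geq 1$), while an arc of chord-diameter $\rho/2$ has arc-length $2\arcsin(\rho/4)$, and the ratio $2\arcsin\rho/\arcsin(\rho/4)$ tends to $8$ as $\rho\to 0$ and is about $12.4$ at $\rho=1$; so six arcs never suffice, and your covering produces a doubling constant closer to $13^d$ than $6^d$. You would need either a sharper covering or a different route (for instance iterating a tighter two-scale estimate). The decay estimate, by contrast, goes through comfortably — the true covering count is only about $\pi/\delta+1$ per coordinate, far below $(2/\delta)^3$ — though your stated reason for the exponent $3d$ (a chord-versus-arc-length conversion) is not where the cube actually comes from; that conversion costs only a bounded factor, and the $3d$ is simply slack in the lemma.
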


In particular $\abs{B+B} \leq 6^d \abs{B}$, since $B_\delta + B_\epsilon \subset B_{\delta+\epsilon}$ by the triangle inequality. Thus Bohr sets have fairly small doubling if $d$ is small. Subspaces, however, enjoy the stronger property that $\abs{V+V} = \abs{V}$ regardless of dimension, and this discrepancy in doubling constants reflects an underlying issue that means our argument becomes terribly inefficient if we simply try to replace subspaces with Bohr sets. In giving a proof of Roth's theorem with strong bounds, Bourgain \cite{bourgain:roth} showed how to work around this issue, namely by working with pairs of Bohr sets $(B,B_\delta)$ with $\delta$ small, for which $\abs{B+B_\delta} \leq \abs{B_{1+\delta}}$. A priori this need not be close to $\abs{B}$, but the following property ensures this.

\begin{defn}[Regularity]
	We say that a Bohr set $B$ of rank $d$ is regular if 
	\[ 1 - 12 d \abs{\delta} \leq \frac{\abs{B_{1+\delta}}}{\abs{B}} \leq 1 + 12 d \abs{\delta} \]
	whenever $\abs{\delta} \leq 1/12d$.%All of these 12s could be made 11, or 10.5, but 12 looks nicer, right?
\end{defn}

The constant $12$ here is of course not particularly important, but we include it for definiteness. Now, not all Bohr sets are regular, but it is a consequence of the doubling estimate $\abs{B} \leq 6^d \abs{B_{1/2}}$ that growth must be somewhat limited around some slight rescaling of $B$:

\begin{lemma}\label{lemma:BohrReg}
If $B$ is a Bohr set, then there is a $\delta \in [\frac{1}{2}, 1]$ for which $B_\delta$ is regular.
\end{lemma}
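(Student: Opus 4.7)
The plan is to argue by contradiction. Suppose $B_\delta$ fails to be regular for every $\delta \in [1/2, 1]$, and work in logarithmic coordinates: set $h(u) := \log \abs{B_{e^u}}$ for $u \in [-\log 2, 0]$. Then $h$ is non-decreasing, and applying the doubling estimate of Lemma~\ref{lemma:BohrGrowth} to $B_{1/2}$ (noting $(B_{1/2})_2 = B_1$) gives $h(0) - h(-\log 2) \leq d \log 6$.

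First I would translate the failure of regularity at $\delta = e^u$ into a statement about $h$. If $B_{e^u}$ fails regularity at some $\eta$ with $\abs{\eta} \leq 1/12d$, then by monotonicity of $h$ the failure must be of ``fast-growth'' type, because the one-sided lower bound is automatic from monotonicity when $\eta > 0$ and the one-sided upper bound is automatic when $\eta < 0$. Writing $s = \abs{\log(1+\eta)}$, one gets an interval of log-length $s$ having $u$ as an endpoint on which $h$ increases by more than $\log\!\bigl(1 + 12d(e^s - 1)\bigr)$. Using $e^s - 1 \geq s$ together with $\log(1+x) \geq x \log 2$ for $x \in [0,1]$ (valid here since $12d(e^s-1) \leq 1$), this lower bound simplifies to $12 d (\log 2) \cdot s$. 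Thus, at every $u \in [-\log 2, 0]$, some interval of log-length at most $1/12d$ having $u$ as an endpoint forces $h$ to have average slope exceeding $12 d \log 2$.

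Next, applying the Vitali covering lemma for intervals in $\R$ to this family of fast-growth intervals, I would extract a pairwise-disjoint subcollection of total log-length at least $(\log 2)/3$. This slightly oversteps the range $[-\log 2, 0]$ at the endpoints by at most $1/12d$, which is easily absorbed either by extending the doubling estimate a little or by shrinking the range slightly and handling the boundary strips separately. Telescoping the growth estimates across the disjoint intervals (using that $h$ is non-decreasing on the gaps) yields
\[ h(0) - h(-\log 2) \;\geq\; 12 d (\log 2) \cdot \tfrac{\log 2}{3} \;=\; 4 d (\log 2)^2. \]

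Comparing with the doubling bound $h(0) - h(-\log 2) \leq d \log 6$ gives $4 (\log 2)^2 \leq \log 6$, which fails numerically since $4(\log 2)^2 \approx 1.92$ exceeds $\log 6 \approx 1.79$; this is the desired contradiction. The main obstacle is that the slack between these two numbers is small, so the constants must be tracked carefully throughout. A cruder estimate such as $\log(1+x) \geq x/2$ would yield only $\approx d$ against $d \log 6 \approx 1.79 d$, which is insufficient; similarly, carrying out the same covering argument in the linear coordinate on $[1/2, 1]$ rather than the logarithmic one loses a similar factor. The fact that the constant $12$ in the definition of regularity is chosen comfortably larger than $\log 6$ is precisely what makes this borderline estimate work.
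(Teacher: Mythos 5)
Your strategy is the standard one — it is essentially how the cited reference (Tao--Vu, Lemma~4.25) handles this, so the approach is sound in spirit: work in log-radius coordinates, convert failure of regularity at every $\delta$ into a ``fast-growth'' interval at every $u$, extract disjoint fast-growth intervals by a covering lemma, and telescope against the doubling bound $h(0) - h(-\log 2) \leq d\log 6$.

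However, there is a genuine gap in the numerics, and it is not the kind that is ``easily absorbed.'' Your comparison is $4(\log 2)^2 \approx 1.92$ versus $\log 6 \approx 1.79$, a margin of roughly $0.13d$. But the boundary correction is of size $\Omega(1)$: the fast-growth intervals have length as large as $s_{\max} \approx 1/12d$, so whether you shrink the range by $2s_{\max}$ or extend the doubling bound, you lose roughly $12d\log 2 \cdot \tfrac{2}{3}s_{\max} \approx 0.5$ from the lower bound (independently of $d$). Concretely: with the Vitali constant $1/3$ on the shrunk range $[-\log 2 + s_{\max}, -s_{\max}]$, the telescoped growth is at least $4d(\log 2)^2 - 8d(\log 2)s_{\max}$, and comparing against $d\log 6$ one needs $0.13 d \gtrsim 0.48$, which fails for $d = 1, 2, 3$. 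The ``easy absorption'' claim is exactly where the argument breaks.

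The fix is available but requires noticing a structural feature you did not exploit: your fast-growth intervals all have $u$ as an \emph{endpoint}, not a center. This permits a rising-sun-style covering with constant $1/2$ rather than the generic Vitali constant $1/3$: at least half the points $u$ have their interval pointing in the same direction (say to the right), and a greedy left-to-right selection of such intervals $[u_j, u_j + s_{u_j}]$ with $u_{j+1} = \inf\{u \in R : u > u_j + s_{u_j}\}$ is pairwise disjoint and covers $R$, hence has total length at least $\tfrac{1}{2}(\log 2 - 2s_{\max})$. This replaces $4(\log 2)^2 \approx 1.92$ by $6(\log 2)^2 \approx 2.88$, giving a margin of $\approx 1.09d$ that comfortably absorbs the $O(1)$ boundary loss for all $d \geq 1$ (and $d = 0$ is trivial). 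So the plan works, but only after upgrading the covering step; as written, the Vitali-$1/3$ version does not close for small ranks.
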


%This is a consequence of the estimate on $\abs{B_2}$ above: if there was significant growth around $B_\delta$ for each $\delta \in [1/2,1]$, then by combining these one would violate the doubling estimate $\abs{B} \leq 6^d \abs{B_{1/2}}$.

If $B$ is regular of rank $d$ we have the useful property that $\abs{B+B_\delta} \leq 2\abs{B}$ whenever $\delta \leq 1/12d$. We also have the following useful consequence of regularity, resting simply on an application of the triangle inequality.

\begin{lemma}\label{lemma:regConv}
	If $B$ is a regular Bohr set of rank $d$ and $B' \subset B_\delta$ with $\delta \leq \epsilon/24d$, then
	\[ \norm{ \mu_B*\mu_{B'} - \mu_B }_{L^1(G)} \leq \epsilon. \]
\end{lemma}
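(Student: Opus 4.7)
The plan is to reduce the claim to a bound on the $L^1$-discrepancy between $\mu_B$ and its translates $\mu_B(\cdot-y)$ for $y \in B'$, and then exploit regularity.

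First I would use the fact that $\mu_{B'}$ is a probability measure (i.e.\ $\sum_y \mu_{B'}(y) = 1$), which lets me write
\[ \mu_B*\mu_{B'}(x) - \mu_B(x) = \sum_y \mu_{B'}(y) \bigl( \mu_B(x-y) - \mu_B(x) \bigr). \]
Applying the triangle inequality in $L^1$ and swapping the order of summation gives
\[ \norm{\mu_B*\mu_{B'} - \mu_B}_{L^1(G)} \leq \sum_{y \in B'} \mu_{B'}(y) \,\norm{\mu_B(\cdot - y) - \mu_B}_{L^1(G)}, \]
so it suffices to show $\norm{\mu_B(\cdot - y) - \mu_B}_{L^1(G)} \leq \epsilon$ uniformly for $y \in B'$.

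Second, for $y \in B' \subset B_\delta$, I note that $B+y \subset B + B_\delta \subset B_{1+\delta}$ by the triangle inequality in the definition of Bohr sets, and of course $B \subset B_{1+\delta}$ as well. Thus $(B+y) \cup B \subset B_{1+\delta}$, and writing $\norm{\mu_B(\cdot-y) - \mu_B}_{L^1(G)} = \abs{(B+y) \triangle B}/\abs{B}$, the symmetric difference is bounded via
\[ \abs{(B+y) \triangle B} = 2\bigl(\abs{(B+y) \cup B} - \abs{B}\bigr) \leq 2\bigl(\abs{B_{1+\delta}} - \abs{B}\bigr). \]

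Finally, I apply regularity: since $\delta \leq \epsilon/24d \leq 1/12d$ (using $\epsilon \leq 1$), the definition gives $\abs{B_{1+\delta}} \leq (1+12d\delta)\abs{B}$, hence $\abs{B_{1+\delta}} - \abs{B} \leq 12 d \delta \abs{B}$. Combining, $\norm{\mu_B(\cdot-y) - \mu_B}_{L^1(G)} \leq 24 d \delta \leq \epsilon$, which plugged into the first display yields the result. There is no serious obstacle here; the only point requiring a slight eye is setting up the symmetric difference bound correctly so that the one-sided regularity estimate suffices.
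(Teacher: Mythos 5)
Your proof is correct and matches the standard argument the paper alludes to (it cites Tao--Vu, Section 4.4, and remarks that the lemma rests on the triangle inequality): expand the convolution against the probability measure $\mu_{B'}$, reduce to a uniform bound on $\norm{\mu_B(\cdot-y)-\mu_B}_{L^1}$ for $y\in B'$, rewrite this as a symmetric-difference ratio, and invoke one-sided regularity. The only cosmetic point is the implicit assumption $\epsilon\leq 1$; for $\epsilon>2$ the conclusion is vacuous since both functions are probability measures, and for $1<\epsilon\leq 2$ the hypothesis still gives $\delta\leq 1/12d$, so nothing is lost.
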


We require finally an arithmetic property of Bohr sets, which follows from the size estimate in Lemma \ref{lemma:BohrGrowth} and the inclusion $k B_{1/k} = B_{1/k} + \cdots + B_{1/k} \subset B$.

\begin{lemma}\label{lemma:BohrAP}
	Let $N$ be a prime and let $B \subset \Zmod{N}$ be a Bohr set of rank $d \geq 1$ and radius $\rho \in [0,2]$. Then $B$ contains an arithmetic progression of size at least $\frac{1}{2\pi} \rho N^{1/d}$.
\end{lemma}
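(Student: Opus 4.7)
The plan is to combine two elementary facts: the inclusion $kB_{1/k} \subset B$ already hinted at, and the size bound $\abs{B_{1/k}} \geq (\rho/2\pi k)^d N$ from Lemma \ref{lemma:BohrGrowth}. Together they will produce an arithmetic progression of the desired length inside $B$ once we choose $k$ appropriately.

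First I would verify the inclusion $k B_{1/k} \subset B$ carefully: if $x_1,\ldots,x_k \in B_{1/k}$, then for every $\gamma \in \Gamma$ the telescoping estimate $\abs{\gamma(x_1)\cdots\gamma(x_k) - 1} \leq \sum_i \abs{\gamma(x_i) - 1} \leq k \cdot (\rho/k) = \rho$ (using that each $\gamma(x_i)$ lies on the unit circle) forces $x_1 + \cdots + x_k \in B$. In particular, since $0 \in B_{1/k}$, we may pad with zeros to conclude that $jg \in B$ for every $g \in B_{1/k}$ and every $0 \leq j \leq k$.

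Next I would pick $k$ to be the largest positive integer satisfying $k < \frac{1}{2\pi}\rho N^{1/d}$, which (after noting that the statement of the lemma is vacuous when $\frac{1}{2\pi}\rho N^{1/d} \leq 1$, since $\{0\}$ is always an AP of length $1$ in $B$) is a valid choice. The size bound from Lemma \ref{lemma:BohrGrowth} then gives $\abs{B_{1/k}} \geq (\rho/2\pi k)^d N > 1$, so $B_{1/k}$ must contain some nonzero $g$.

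Finally, the $k+1$ elements $0, g, 2g, \ldots, kg$ lie in $B$ by the first step and are pairwise distinct because $N$ is prime and $g \neq 0$ forces the map $j \mapsto jg$ to be injective on $\{0,1,\ldots,k\}$ (which has fewer than $N$ elements). This is an AP in $B$ with common difference $g$ and length $k+1 \geq \frac{1}{2\pi}\rho N^{1/d}$. There is no real obstacle in this argument; the only thing to be a little careful about is the edge case when $\frac{1}{2\pi}\rho N^{1/d}$ is exactly an integer, which is why I would take $k$ strictly less than this threshold rather than $k = \lfloor \frac{1}{2\pi}\rho N^{1/d}\rfloor$.
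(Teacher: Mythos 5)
Your proof is correct and follows the same route the paper indicates: combine the size lower bound $\abs{B_{1/k}} \geq (\rho/2\pi k)^d N$ from Lemma \ref{lemma:BohrGrowth} with the inclusion $kB_{1/k} \subset B$ to find a nonzero $g \in B_{1/k}$ whose multiples $0,g,\ldots,kg$ form a progression of the claimed length in $B$. The care you take in choosing $k$ strictly below the threshold (rather than the floor) to guarantee $\abs{B_{1/k}} > 1$, and the check of distinctness via primality of $N$, are exactly the right details to spell out.
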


\section{$L^\infty$-almost-periodicity relative to Bohr sets}\label{section:ap}

To carry out the strategy of Section \ref{section:finFields} with Bohr sets in place of groups, the first thing we need to do is prove an appropriate analogue of Theorem \ref{thm:Linfty-ap_FF}. Of course, not only do we need to replace the subspace $V$ in the conclusion with a Bohr set -- which is entirely straightforward -- but we are only allowed to assume density in a Bohr set rather than in a group. It turns out that this is also fairly straightforwardly achievable.

\subsection*{Almost-periodicity with dense sets}

Recall Theorem \ref{thm:Lp-ap}, the $L^p$-almost-periodicity result for two-fold convolutions. From this we argue straightforwardly as with Theorem \ref{thm:Linfty-ap_FF} to obtain the following $L^\infty$-almost-periodicity result for three-fold convolutions.

\begin{theorem}\label{thm:Linfty-ap}
	Let $\epsilon \in (0,1)$ and $k \in \N$ be parameters. Let $A, M, L, S$ be finite subsets of an abelian group. Suppose $\abs{A+S} \leq K\abs{A}$ and $\eta := \abs{M}/\abs{L} \leq 1$. Then there is a set $T \subset S$ with $\abs{T} \geq \exp\left(-C k^2 \epsilon^{-2} \log(2/\eta) \log(2K) \right) \abs{S}$ such that
	\[ \norm{ 1_A*1_M*1_L(\cdot+t) - 1_A*1_M*1_L }_\infty \leq \epsilon \abs{A} \abs{M} \quad\text{for all $t \in kT-kT$.} \]
\end{theorem}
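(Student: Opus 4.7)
The plan is to deduce this from the $L^p$-almost-periodicity result, Theorem \ref{thm:Lp-ap}, in essentially the same way that Theorem \ref{thm:Linfty-ap_FF} was deduced from Theorem \ref{thm:Lp}: first apply the $L^p$ bound to the two-fold convolution $1_A * 1_L$ with a carefully chosen exponent $p$, and then use H\"older's inequality to trade $L^p$ control for $L^\infty$ control on the three-fold convolution, at the cost of picking up a factor involving $\abs{M}$.

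Concretely, I would apply Theorem \ref{thm:Lp-ap} to $A, L, S$ with accuracy parameter $\epsilon/2$ and exponent
\[ p := \max\bigl(2,\, \ceiling{ C_0 \log(2/\eta) }\bigr), \]
where $C_0$ is an absolute constant chosen large enough that $\eta^{-1/p} \leq 2$. This produces a set $T \subset S$ of size
\[ \abs{T} \geq 0.99\, K^{-Cpk^2/\epsilon^2} \abs{S} \geq \exp\bigl(-C' k^2 \epsilon^{-2} \log(2/\eta) \log(2K)\bigr)\abs{S}, \]
as required (the factor $0.99$ and the replacement of $\log K$ by $\log(2K)$ being absorbed into the absolute constant $C'$), and such that
\[ \norm{ 1_A * 1_L(\cdot + t) - 1_A * 1_L }_p \leq \tfrac{1}{2}\epsilon \abs{A} \abs{L}^{1/p} \quad \text{for every } t \in kT - kT. \]

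Given this, write $g_t := 1_A * 1_L(\cdot + t) - 1_A * 1_L$ and observe that by associativity of convolution, $1_A * 1_M * 1_L(x+t) - 1_A * 1_M * 1_L(x) = (1_M * g_t)(x)$. H\"older's inequality with conjugate exponent $r = p/(p-1)$ then gives, uniformly in $x$,
\[ \abs{(1_M * g_t)(x)} \leq \norm{1_M}_r \norm{g_t}_p \leq \abs{M}^{1/r} \cdot \tfrac{1}{2}\epsilon \abs{A} \abs{L}^{1/p} = \tfrac{1}{2}\epsilon \abs{A} \abs{M}\, (\abs{L}/\abs{M})^{1/p}, \]
which is at most $\epsilon \abs{A}\abs{M}$ by the choice of $p$. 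The whole argument is routine; the only point to get right is calibrating $p$ just large enough (of order $\log(2/\eta)$) to absorb the H\"older loss $(\abs{L}/\abs{M})^{1/p}$, but no larger, so that the exponent in the lower bound on $\abs{T}$ remains of the stated shape $k^2\epsilon^{-2}\log(2/\eta)\log(2K)$ rather than deteriorating.
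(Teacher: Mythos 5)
Your proposal is correct and follows essentially the same route as the paper: apply Theorem \ref{thm:Lp-ap} with accuracy $\epsilon/2$ and exponent $p$ of order $\log(2/\eta)$, then pass from $L^p$ to $L^\infty$ via H\"older, picking up the factor $(\abs{L}/\abs{M})^{1/p}$ which the choice of $p$ tames. The paper's choice is $p = 3\log(2\abs{L}/\abs{M})$, which plays the same role as your $\max(2,\ceiling{C_0\log(2/\eta)})$.
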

\begin{proof}
	Apply Theorem \ref{thm:Lp-ap} with parameters $\epsilon/2$ and $p$ to be specified to obtain a set $T$ of almost-periods for $1_A*1_L$. By H\"older's inequality we then have, for $\frac{1}{p}+\frac{1}{q} = 1$ and any $t \in kT-kT$,
	\begin{align*}
		\norm{ 1_A*1_M*1_L(\cdot+t) - 1_A*1_M*1_L }_\infty &\leq \norm{1_M}_q \norm{ 1_A*1_L(\cdot+t) - 1_A*1_L }_p \\
		&\leq \tfrac{1}{2} \epsilon \abs{A} \abs{M} (\abs{L}/\abs{M})^{1/p}.
	\end{align*}
	Picking $p = 3 \log(2\abs{L}/\abs{M})$ yields the result.
\end{proof}

\begin{remark}
	Note that the set $T$ one obtains does not in fact depend on $M$ but only on $\abs{M}/\abs{L}$. Also, since the methods of \cite{CS} worked for non-abelian groups, a version of the above result holds for arbitrary groups, and one could also replace $1_M$ and $1_L$ by functions more general than indicator functions, but we shall only apply it in the above case.
\end{remark}

Finally we shall bootstrap this to find not only a large set of translates, but a \emph{structured} set: a Bohr set of translates. The price we shall pay is that we shall need to assume that the set $A$ interacts nicely with a Bohr set and not just an arbitrary set $S$. The main idea of the proof is to couple Theorem \ref{thm:Linfty-ap} with Chang's theorem on the structure of large spectra, which was one of the main insights that led to the powerful results \cite{sanders:bogolyubov} of Sanders. To state this properly we shall need the Fourier transform; the results of the following subsection are the only ones in this paper that appeal to Fourier analysis.

\subsection*{Almost-periodicity with Bohr sets}
For a function $f : G \to \C$ on a finite abelian group $G$ we define the Fourier transform $\fhat : \Ghat \to \C$ on the dual group $\Ghat$ by
	\[ \fhat(\gamma) := \sum_{x \in G} f(x) \overline{\gamma(x)}. \]
Writing $\E_{x \in X} = \abs{X}^{-1} \sum_{x \in X}$, the Fourier inversion formula, Parseval's identity and the convolution identity then take the form
	\begin{gather*}
	f(x) = \E_{\gamma \in \Ghat}\, \fhat(\gamma) \gamma(x), \\
	\sum_{x \in G} \abs{f(x)}^2 = \E_{\gamma \in \Ghat}\, \abs{\fhat(\gamma)}^2, \text{ and}\\
	\widehat{f*g}(\gamma) = \fhat(\gamma) \widehat{g}(\gamma).
	\end{gather*}
Finally, for a set $X \subset G$, write
	\[ \Spec_\delta(\mu_X) := \{ \gamma \in \Ghat : \abs{\widehat{\mu_X}(\gamma)} \geq \delta \} \]
	for the \emph{$\delta$-large spectrum} of $\mu_X = 1_X/\abs{X}$. See \cite{TV} for more on all of this.

Chang's theorem \cite[Lemma 4.36]{TV} says that the large spectrum $\Spec_\delta(\mu_X)$ is `low-dimensional': it is contained in the $\{-1,0,1\}$-span of a set of at most $C \delta^{-2} \log(1/\mu_G(X))$ characters. An immediate and useful consequence is that all the characters in $\Spec_\delta(\mu_X)$ can be approximately annihilated by a low-rank Bohr set if $X$ is relatively dense in $G$. Sanders proved an efficient version of such a consequence when $X$ is a dense subset of a Bohr set rather than the group; the following is \cite[Proposition 4.2]{sanders:structInSumsets}.%, but we could equivalently couple Lemmas 6.2 and 6.3 in the later \cite{sanders:bogolyubov} for our purposes.

\begin{proposition}[Chang--Sanders]\label{prop:localChang}
Let $\delta, \nu \in (0, 1]$. Let $G$ be a finite abelian group, let $B = \Bohr(\Gamma, \rho) \subset G$ be a regular Bohr set of rank $d$ and let $X \subset B$. Then there is a set of characters $\Lambda \in \Ghat$ and a radius $\rho'$ with 
\[ \abs{\Lambda} \ll \delta^{-2} \log(2/\mu_B(X)) \quad \text{and} \quad \rho' \gg \rho \nu \delta^2/d^2\log(2/\mu_B(X)) \]
such that
\[ \abs{1 - \gamma(t)} \leq \nu \quad \text{for all $\gamma \in \Spec_\delta(\mu_X)$ and $t \in \Bohr(\Gamma \cup \Lambda, \rho')$}. \]
\end{proposition}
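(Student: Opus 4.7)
The plan is to run the standard Chang-type argument, but adapted to dense subsets of the Bohr set $B$ rather than subsets of the ambient group. The three ingredients are: a maximal dissociated subset $\Lambda$ of $\Spec_\delta(\mu_X)$, a local Rudin-type inequality tailored to $B$, and the elementary fact that every element of the large spectrum lies in the $\{-1,0,1\}$-span of $\Lambda$.

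First I would pick a regular sub-Bohr set $B' \leq B$ of radius of order $\rho/d$ (using Lemma \ref{lemma:BohrReg}), chosen so that convolving by $\mu_{B'}$ perturbs $\mu_B$ only slightly in $L^1$ via Lemma \ref{lemma:regConv}, and let $\Lambda \subset \Spec_\delta(\mu_X)$ be a maximal dissociated subset (no non-trivial $\{-1,0,1\}$-combination equal to the trivial character). The heart of the proof is a local Rudin-type inequality: for any coefficients $(a_\lambda)_{\lambda \in \Lambda}$ and any $p \geq 2$,
\[ \left\| \textstyle\sum_{\lambda \in \Lambda} a_\lambda \lambda \right\|_{L^p(\mu_{B'})} \ll \sqrt{p}\,\|a\|_2. \]
This is proved by the usual Riesz product / moment expansion: dissociativity reduces matters to counting, and one replaces exact Plancherel orthogonality with approximate orthogonality over $B'$, where the pairing $\int \lambda_1\overline{\lambda_2}\,\mathrm{d}\mu_{B'}$ is nearly zero for $\lambda_1 \neq \lambda_2$ up to a regularity-controlled error. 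Pairing this with the dual function $f = \sum_{\lambda \in \Lambda} \mathrm{sgn}(\widehat{\mu_X}(\lambda))\lambda$, using $|\widehat{\mu_X}(\lambda)| \geq \delta$ on $\Lambda$, together with H\"older and a transfer from $\mu_X$ (via $X \subset B$) to the Bohr-normalised measure $\mu_{B'}$, yields $|\Lambda| \ll \delta^{-2} \log(2/\mu_B(X))$ after the standard optimisation $p \asymp \log(2/\mu_B(X))$, exactly as in Chang's original argument.

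Second, I would convert this cardinality bound into an annihilating Bohr set. By maximality of $\Lambda$, every $\gamma \in \Spec_\delta(\mu_X)$ admits a representation $\gamma = \prod_{\lambda \in \Lambda} \lambda^{\epsilon_\lambda}$ with $\epsilon_\lambda \in \{-1,0,1\}$. The triangle inequality gives $|1-\gamma(t)| \leq \sum_{\lambda \in \Lambda}|1-\lambda(t)|$, so choosing
\[ \rho' \;\asymp\; \frac{\rho \nu \delta^2}{d^2\,\log(2/\mu_B(X))} \]
and $t \in \Bohr(\Gamma \cup \Lambda, \rho')$, each of the at most $|\Lambda|$ terms is bounded by $\rho'$, so $|1 - \gamma(t)| \leq |\Lambda|\rho' \leq \nu$. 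One factor of $1/d$ in $\rho'$ accounts for the passage to a regular refinement needed to run the local Rudin inequality, a second factor of $1/d$ for a further regularity margin when combining with $\Gamma$, and the factor $\delta^2/\log(2/\mu_B(X))$ is just $1/|\Lambda|$.

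The main obstacle is the local Rudin inequality. In the global case it follows cleanly from Plancherel and hypercontractivity; on a Bohr set the absence of exact orthogonality forces one to track the error in each pairing $\int \lambda_1\overline{\lambda_2}\,\mathrm{d}\mu_{B'}$ and sum over the $p$-fold expansion. Doing this uniformly in $p$ while keeping the loss compatible with the claimed dependence on $d$ and $\mu_B(X)$ is the delicate part; the regularity estimates of Section \ref{section:Bohr}, in particular Lemmas \ref{lemma:BohrGrowth} and \ref{lemma:regConv}, are precisely what allow one to absorb this error, and once this is done the remainder of the argument reduces to Chang's classical dualisation.
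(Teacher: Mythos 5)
The paper supplies no proof of this statement: it cites \cite[Proposition 4.2]{sanders:structInSumsets} directly. So your sketch is being compared against Sanders's argument and the classical Chang argument it localises.

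Your high-level strategy --- maximal ``dissociated'' subset, local Rudin inequality, triangle inequality over $\{-1,0,1\}$-representations --- has the right shape, but there is a genuine gap at its heart. The local Rudin inequality you invoke, namely $\norm{\sum_{\lambda\in\Lambda} a_\lambda\lambda}_{L^p(\mu_{B'})} \ll \sqrt{p}\,\norm{a}_2$ for $\Lambda$ \emph{classically} dissociated, is false. Classical dissociativity is a purely algebraic condition on $\Lambda$ and says nothing about how the characters behave when restricted to $B'$: two distinct $\lambda_1,\lambda_2\in\Lambda$ can have $\lambda_1\overline{\lambda_2}$ nearly trivial on $B'$ (equivalently $\abs{\widehat{\mu_{B'}}(\lambda_1\overline{\lambda_2})}$ close to $1$), in which case $\lambda_1$ and $\lambda_2$ are essentially indistinguishable on $B'$ and $\norm{\lambda_1+\lambda_2}_{L^p(\mu_{B'})}$ is about $2$ for every $p$, not $\sqrt 2$. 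Nor is this a corner case: when $X$ is dense in $B=\Bohr(\Gamma,\rho)$ with $\rho$ small, each $\gamma\in\Gamma$ satisfies $\abs{1-\gamma}\leq\rho$ on $B\supset X$ and hence $\abs{\widehat{\mu_X}(\gamma)}\geq 1-\rho$, so $\Gamma\subset\Spec_\delta(\mu_X)$; a maximal classically dissociated subset of the spectrum can then easily have size of order $d$, which is uncontrolled by $\delta^{-2}\log(2/\mu_B(X))$. The regularity estimates you point to (Lemmas \ref{lemma:BohrGrowth} and \ref{lemma:regConv}) bound ratios like $\abs{B_{1+\delta}}/\abs{B}$, not the Fourier coefficients $\widehat{\mu_{B'}}(\gamma)$ at individual frequencies, so they cannot absorb this error.

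The repair --- and the content of Sanders's proposition --- is to replace classical dissociativity by dissociativity \emph{relative} to the Bohr set: insist that no nontrivial (bounded-weight) $\{-1,0,1\}$-combination of $\Lambda$ lies in $\Spec_\kappa(\mu_{B'})$ for an appropriately chosen $\kappa$. With this definition the Riesz-product moment expansion does give approximate orthogonality and hence the claimed cardinality bound for a maximal relatively dissociated $\Lambda\subset\Spec_\delta(\mu_X)$. But maximality then yields, for each $\gamma\in\Spec_\delta(\mu_X)$, only the weaker representation $\gamma=\sigma\cdot\prod_{\lambda\in\Lambda}\lambda^{\epsilon_\lambda}$ with $\sigma\in\Spec_\kappa(\mu_{B'})$ rather than $\sigma=1$, and this correction $\sigma$ must also be annihilated. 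This is precisely why $\Gamma$ appears in $\Bohr(\Gamma\cup\Lambda,\rho')$ as a load-bearing ingredient rather than the ``regularity margin'' your sketch describes: characters in $\Spec_\kappa(\mu_{B'})$ are automatically nearly trivial on suitably narrowed Bohr sets $\Bohr(\Gamma,\rho'')$, and it is this, combined with the triangle inequality over the at most $\abs{\Lambda}$ factors $\lambda^{\epsilon_\lambda}$, that produces the final bound and the factors of $d$ in $\rho'$.
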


The aforementioned bootstrapping can now take place via a standard argument.

\begin{theorem}[$L^\infty$-almost-periodicity with Bohr sets]\label{thm:Linfty-ap_Bohr}
	Let $\epsilon \in (0,1)$. Let $A, M, L$ be subsets of a finite abelian group $G$, and let $B \subset G$ be a regular Bohr set of rank $d$ and radius $\rho$. Suppose $\abs{A+S} \leq K\abs{A}$ for a subset $S \subset B$ with $\mu_B(S) \geq \sigma > 0$, and assume $\eta := \abs{M}/\abs{L} \leq 1$. Then there is a regular Bohr set $B' \leq B$ of rank at most $d + d'$ and radius at least $\rho \epsilon \eta^{1/2}/d^2d'$, where
	\[ d' \ll \epsilon^{-2} \log^2(2/\epsilon\eta) \log(2/\eta) \log(2K) + \log(1/\sigma), \]
such that	
	\[ \norm{ 1_A*1_M*1_L(\cdot+t) - 1_A*1_M*1_L }_\infty \leq \epsilon \abs{A} \abs{M} \quad\text{for all $t \in B'$.} \]
In particular,
	\[ \norm{ 1_A*1_M*1_L*\mu_{B'} - 1_A*1_M*1_L }_\infty \leq \epsilon \abs{A} \abs{M}. \]
\end{theorem}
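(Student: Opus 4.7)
The plan is to combine Theorem \ref{thm:Linfty-ap} with the local Chang-type result of Proposition \ref{prop:localChang} via a local Bogolyubov-style Fourier argument. The idea is to first extract from $S$ a large subset $T$ whose difference structure $kT - kT$ (for some small $k$) consists of $L^\infty$-almost-periods of $f := 1_A*1_M*1_L$, and then to upgrade the unstructured set $kT - kT$ to a regular Bohr set of almost-periods by exhibiting a Bohr set inside it that approximately annihilates the large spectrum of $\mu_T$.

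Concretely, I would first apply Theorem \ref{thm:Linfty-ap} with the same $\epsilon$, $K$ and $\eta$, together with a small integer $k \ge 2$, to obtain $T \subset S$ with $\abs{T} \ge \exp(-Ck^2\epsilon^{-2}\log(2/\eta)\log(2K))\abs{S}$, so that every $t \in kT - kT$ is an $(\epsilon\abs{A}\abs{M})$-almost-period of $f$ in $L^\infty$. Since $\abs{S} \ge \sigma\abs{B}$, this gives $\log(2/\mu_B(T)) \lesssim \log(2/\sigma) + \epsilon^{-2}\log(2/\eta)\log(2K)$, which accounts for the $\log(1/\sigma)$ term appearing in $d'$. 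I would then apply Proposition \ref{prop:localChang} to the dense subset $T \subset B$ with parameters $\delta$ and $\nu$ to be specified below, obtaining a sub-Bohr set $B_1 = \Bohr(\Gamma \cup \Lambda, \rho'')$ with $\abs{\Lambda} \ll \delta^{-2}\log(2/\mu_B(T))$ and $\rho'' \gg \rho\nu\delta^2/(d^2\log(2/\mu_B(T)))$ for which $\abs{1 - \gamma(t)} \le \nu$ whenever $\gamma \in \Spec_\delta(\mu_T)$ and $t \in B_1$. A pass to a regular sub-Bohr set $B' \le B_1$ via Lemma \ref{lemma:BohrReg} loses only a constant factor in the radius and leaves the rank unchanged.

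The core and trickiest step is then a local Bogolyubov computation showing that $B_1 \subset kT - kT$, so that the almost-period property from the first step transfers to all of $B'$. For this one writes
\[ \mu_T^{*k}*\mu_{-T}^{*k}(t) = \E_\gamma \abs{\widehat{\mu_T}(\gamma)}^{2k}\gamma(t) \]
via Fourier inversion and splits the sum into $\Spec_\delta(\mu_T)$, where $\gamma(t) \approx 1$ to within $\nu$ and the contribution is close to the positive quantity $\mu_T^{*k}*\mu_{-T}^{*k}(0)$, and its complement, where Parseval together with the pointwise bound $\abs{\widehat{\mu_T}}^{2k-2} \le \delta^{2k-2}$ off $\Spec_\delta$ gives a contribution of size at most $\delta^{2k-2}/\abs{T}$. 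Positivity of the convolution at $t$ then forces $t \in kT - kT$. The main obstacle is the parameter balancing: one must choose $\delta$ small enough that the positivity threshold is met (this is what is meant by the local-Bogolyubov requirement), while keeping $\delta^{-2}$ small enough that the Chang bound $\delta^{-2}\log(2/\mu_B(T))$ for $\abs{\Lambda}$ matches the claimed $d'$, and one must choose $\nu$ of order $\epsilon\eta^{1/2}$ to produce the $\eta^{1/2}$ factor in the radius. I expect the correct choices to be $\delta$ of order $1/\log(2/\epsilon\eta)$ and $\nu$ of order $\epsilon\eta^{1/2}$, giving the $\log^2(2/\epsilon\eta)$ factor in $d'$ and the $\eta^{1/2}$ in the radius. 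The second, ``in particular'' conclusion follows from the first by averaging over $t \in B'$ and applying the triangle inequality.
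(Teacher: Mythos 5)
Your opening and closing moves are right: Theorem \ref{thm:Linfty-ap} gives the set $T \subset S$ of $L^\infty$-almost-periods, Chang--Sanders (Proposition \ref{prop:localChang}) applied to $T$ gives a Bohr set annihilating $\Spec_\delta(\mu_T)$, the regularisation via Lemma \ref{lemma:BohrReg} costs only constants, and the ``in particular'' follows by averaging. The gap is the central step: the attempted containment $B_1 \subset kT - kT$ via a Bogolyubov positivity argument cannot be made to work in this setting. The obstruction is not merely parameter balancing. Writing out your computation, positivity of $\mu_T^{(k)}*\mu_{-T}^{(k)}(t)$ requires the off-spectrum error, which you bound by $\delta^{2k-2}\E_\gamma\abs{\widehat{\mu_T}(\gamma)}^2 = \delta^{2k-2}/\abs{T}$, to be dominated by the diagonal contribution $1/\abs{G}$ from $\gamma = 0$; i.e.\ you need $\delta^{2k-2} \lesssim \mu_G(T)$. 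But $T$ lives inside the Bohr set $B$, whose density in $G$ is entirely uncontrolled and can be astronomically small (e.g.\ $\exp(-n)$ in $\F_q^n$, or much worse in $\Z_N$). With $k$ a small constant, as you propose, this forces $\delta$ to be exponentially small in $n$, which destroys the Chang bound $\abs{\Lambda} \ll \delta^{-2}\log(2/\mu_B(T))$ and bears no resemblance to the claimed $d' \ll \epsilon^{-2}\log^2(2/\epsilon\eta)\log(2/\eta)\log(2K) + \log(1/\sigma)$. Taking $k$ large instead doesn't help: the cost from Theorem \ref{thm:Linfty-ap} is $\exp(-Ck^2\epsilon^{-2}\cdots)$ and $\delta^{2k-2}$ still needs to reach $\mu_G(T)$. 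The Parseval budget in your argument lives at the scale of $G$, not of $B$, and no choice of $(\delta,\nu,k)$ fixes that mismatch.

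The paper avoids asking for any such containment. After obtaining $T$, it fixes $z \in T$, sets $X = T - z$, and observes (triangle inequality, using $kX \subset kT - kT$) that $f := 1_A*1_M*1_L$ is $L^\infty$-close to the \emph{smoothed} function $f*\mu_X^{(k)}$. It then proves directly that $f*\mu_X^{(k)}$ is almost-periodic on the Bohr set produced by Chang--Sanders (with $\delta = 1/2$ fixed, $\nu = \epsilon\eta^{1/2}$, and $k = \ceiling{C\log(2/\epsilon\eta)}$, not $k$ small). The Fourier estimate bounds
\[
\E_{\gamma}\,\abs{\widehat{1_A}(\gamma)}\abs{\widehat{1_M}(\gamma)}\abs{\widehat{1_L}(\gamma)}\abs{\widehat{\mu_X}(\gamma)}^k\abs{\gamma(t)-1}
\]
by splitting on $\Spec_{1/2}(\mu_X)$: on-spectrum $\abs{\gamma(t)-1} \leq \epsilon\eta^{1/2}$, off-spectrum $\abs{\widehat{\mu_X}}^k \leq 2^{-k} \leq \epsilon\eta^{1/2}$. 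The remaining sum is controlled by Cauchy--Schwarz and Parseval: $\abs{\widehat{1_A}} \leq \abs{A}$ and $\E_\gamma\abs{\widehat{1_M}}\abs{\widehat{1_L}} \leq \abs{M}^{1/2}\abs{L}^{1/2}$, so the total is $\lesssim \epsilon\eta^{1/2}\abs{A}\abs{M}^{1/2}\abs{L}^{1/2} = \epsilon\abs{A}\abs{M}$. Here the exchange rate between budget $\abs{A}\abs{M}^{1/2}\abs{L}^{1/2}$ and target $\abs{A}\abs{M}$ is $\eta^{1/2}$, a benign quantity encoded directly in the theorem's hypotheses, whereas your exchange rate $\mu_G(T)$ is not controllable. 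This smoothing trick is exactly the insight from \cite{sanders:bogolyubov} that the paper highlights, and it is what your proposal is missing.
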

\begin{proof}
	Begin by applying Theorem \ref{thm:Linfty-ap} to $1_A*1_M*1_L$ with parameters $\epsilon$ and $k := \ceiling{C\log(2/\epsilon \eta)}$ to obtain a set $T \subset S$ with
	\[ \mu_B(T) \geq \exp\left( -C \epsilon^{-2} k^2 \log(2/\eta) \log(2K)\right) \sigma \]
	such that
	\[ \norm{ 1_A*1_M*1_L(\cdot+t) - 1_A*1_M*1_L }_\infty \leq \epsilon \abs{A} \abs{M} \quad\text{for all $t \in kT-kT$.} \]
	Fix some $z \in T$ and set $X = T-z$, so that the above inequality holds for all $t \in kX$. Thus, by the triangle inequality,
	\[ \norm{ 1_A*1_M*1_L*\mu_X^{(k)} - 1_A*1_M*1_L }_\infty \leq \epsilon \abs{A} \abs{M}, \]
	where $\mu_X^{(k)} := \mu_X*\cdots*\mu_X$ with $k$ copies of $\mu_X$. It thus suffices to establish the theorem with $1_A*1_M*1_L*\mu_X^{(k)}$ in place of $1_A*1_M*1_L$, and so we switch now to this.
	
	Noting that translating $X$ does not affect the conclusion of Proposition \ref{prop:localChang}, apply this proposition to $T = X + z$ with parameters $\delta = 1/2$ and $\nu = \epsilon \eta^{1/2}$ together with Lemma \ref{lemma:BohrReg} to get a regular Bohr set $B' \leq B$ of the required rank and radius such that
	\[ \abs{1 - \gamma(t)} \leq \epsilon \eta^{1/2} \quad \text{for all $\gamma \in \Spec_{1/2}(\mu_X)$ and $t \in B'$.} \]
	For any $x \in G$ and $t \in B'$ we then have, by the Fourier inversion formula, triangle inequality and convolution identity,
	\begin{align}
		\abs{1_A*1_M*1_L*\mu_X^{(k)}(x+t) &- 1_A*1_M*1_L*\mu_X^{(k)}(x)} \nonumber\\ \leq &\ \E_{\gamma \in \Ghat}\, \abs{\widehat{1_A}(\gamma)} \abs{\widehat{1_M}(\gamma)} \abs{\widehat{1_L}(\gamma)} \abs{\widehat{\mu_X}(\gamma)}^k \abs{\gamma(t) - 1}. \label{eqn:FourierBound}
	\end{align}
	For each term in this average, consider whether $\gamma \in \Spec_{1/2}(\mu_X)$ or not. If $\gamma \in \Spec_{1/2}(\mu_X)$ we have $\abs{\gamma(t) - 1} \leq \epsilon \eta^{1/2}$, and if not then $\abs{\widehat{\mu_X}(\gamma)}^k \leq 1/2^k \leq \epsilon \eta^{1/2}$. Thus \eqref{eqn:FourierBound} is at most twice
	\[ \epsilon \eta^{1/2}\ \E_{\gamma \in \Ghat}\, \abs{\widehat{1_A}(\gamma)} \abs{\widehat{1_M}(\gamma)} \abs{\widehat{1_L}(\gamma)}. \]
	Using the trivial inequality $\abs{\widehat{1_A}(\gamma)} \leq \abs{A}$ and Cauchy-Schwarz plus Parseval \`a la
	\[ \E_{\gamma \in \Ghat}\, \abs{\widehat{1_M}(\gamma)} \abs{\widehat{1_L}(\gamma)} \leq \left( \E_{\gamma \in \Ghat}\, \abs{\widehat{1_M}(\gamma)}^2 \right)^{1/2} \left( \E_{\gamma \in \Ghat}\, \abs{\widehat{1_L}(\gamma)}^2 \right)^{1/2} = \abs{M}^{1/2}\abs{L}^{1/2} \]
	finishes the proof, after replacing $\epsilon$ with $\epsilon/4$.
\end{proof}

\begin{remark}
	The regime in which the above argument is set up to be efficient is one in which $A$ is thought of as extremely small, but structured in the sense of not expanding much under addition to a Bohr set, $M$ as being of `medium' size and $L$ as being large.
	The main utility of this result over previous Fourier-analytic ones of this sort, then, stems from the fact that the dependence on $\abs{L}/\abs{M}$ in the rank of $B'$ is only polylogarithmic rather than polynomial.
\end{remark}

\section{Obtaining density increments on Bohr sets}\label{section:densInc}

The following proposition drives the density increment argument.

\begin{proposition}\label{prop:dens_inc}
	Let $G$ be a finite abelian group of order not divisible by $3$, let $B \subset G$ be a regular Bohr set of rank $d$ and radius $\rho$, and let $A \subset B$ have relative density $\mu_B(A) \geq \alpha$. Assume that $\abs{B} \geq (Cd/\alpha)^{3d}$. If $A$ does not contain any non-trivial solutions to $x+y+z=3w$, then $A$ has relative density at least $\tfrac{5}{4} \alpha$ on a translate of a Bohr set $B' \leq B$ of rank at most $d + d'$ and radius at least $\rho \alpha^{3/2}/d^5d'$, where $d' \ll \log(2/\alpha)^4$.
\end{proposition}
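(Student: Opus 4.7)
The plan is to follow the template of the first proof of Theorem~\ref{thm:finFieldRoth}, using Theorem~\ref{thm:Linfty-ap_Bohr} in place of Theorem~\ref{thm:Linfty-ap_FF} and producing a sub-Bohr set $B'\leq B$ in lieu of a subspace. Two identities drive the argument: the trivial $1_A*1_A*1_{-(A+A)}(0)=\abs{A}^2$ (since $1_A*1_A$ is supported on $A+A$) and the identity $1_{-3\cdot A}*1_A*1_{A+A}(0)=\abs{A}$ coming from solution-freeness, exactly as in the finite-field proof: any $(a,b)\in A\times A$ with $3a-b\in A+A$ would give a non-trivial solution to $x+y+z=3w$ unless $a=b$, leaving only the diagonal contribution of $|A|$.

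First I would split into two cases based on whether $\abs{A+A}\leq\tfrac{1}{2}\abs{B}$ (small sumset) or not (large sumset), and apply Theorem~\ref{thm:Linfty-ap_Bohr} in each case with the input $S=B_\delta$ for $\delta$ a small constant multiple of $1/d$, so that regularity of $B$ gives $\abs{A+S}\leq\abs{B+S}\leq 2\abs{B}$ and hence $K=2/\alpha$. In the small-sumset case, I would take $M=A$, $L=-(A+A)$ and $\epsilon=\tfrac14$: the threshold forces $\eta=\abs{A}/\abs{A+A}\geq 2\alpha$, so $\log(2/\eta)$ and $\log(2K)$ are both $O(\log(2/\alpha))$, and smearing the trivial identity via the resulting $L^\infty$-almost-periodicity gives $\norm{1_A*\mu_{B'}}_\infty\geq\tfrac32\alpha$, exactly as in Proposition~\ref{prop:sumsetImpliesInc}. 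In the large-sumset case, I would take $M=-3\cdot A$, $L=A+A$ and $\epsilon=\tfrac18$: smearing the other identity gives $1_{-3A}*1_A*1_{A+A}*\mu_{B'}(0)\leq\tfrac14\abs{A}^2$, and picking an ambient Bohr set $R$ (e.g.\ $R=B_5$) large enough that $R\supset(3A-A)-B'$ and thus $1_{-3A}*1_A*1_R*\mu_{B'}(0)=\abs{A}^2$, the complementary convolution $1_{-3A}*1_A*1_{R\setminus(A+A)}*\mu_{B'}(0)$ is at least $\tfrac34\abs{A}^2$, and the density increment follows from the same pointwise bound $\leq\abs{A}\cdot\abs{R\setminus(A+A)}\cdot\norm{1_A*\mu_{B'}}_\infty$ as in Proposition~\ref{prop:noSolsLargeSumsetImpliesInc}.

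The main obstacle is the large-sumset case: a priori $\abs{A+A}$ can be as large as $6^d\abs{B}$, so that both $\log(2K)$ and $\log(2/\eta)$ in Theorem~\ref{thm:Linfty-ap_Bohr}, as well as the size $\abs{R\setminus(A+A)}$ appearing in the final crude bound, threaten to depend on $d$ and spoil the target $d'\ll\log(2/\alpha)^4$. The strategy is to route all $d$-dependent losses into the radius of $B'$ rather than its rank: the factor $d^5$ in the claimed radius $\rho\alpha^{3/2}/d^5d'$ is designed to absorb the regularity cost from passing to $S=B_\delta$, the doubling cost from expanding the ambient set from $B$ to $R$, and the extra shrinking needed to apply Lemma~\ref{lemma:regConv} when replacing $1_R$ by $1_R*\mu_{B^\sharp}$ to smooth away $\abs{B+B}$-versus-$\abs{B}$ discrepancies in the final bound. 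The $\alpha^{3/2}$ then matches the $\epsilon\eta^{1/2}$ produced by Theorem~\ref{thm:Linfty-ap_Bohr} with $\epsilon$ constant and $\eta^{1/2}\asymp\alpha^{1/2}$, times an additional factor of $\alpha$ lost to the regularity choice $\delta$. Verifying that these losses combine cleanly enough to give the claimed radius and rank bounds simultaneously, while keeping the density increment step $\norm{1_A*\mu_{B'}}_\infty\geq\tfrac{5}{4}\alpha$ uniform across the two cases, is the most delicate technical part of the proof.
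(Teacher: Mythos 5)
Your proposal has a genuine gap in the large-sumset case, and the fix you sketch cannot work. You correctly identify that $\abs{A+A}$ can be as large as $6^d\abs{B}$, but you then claim that the resulting $d$-dependent losses can be absorbed into the radius of $B'$. This is impossible: Theorem \ref{thm:Linfty-ap_Bohr} gives the rank bound $d' \ll \epsilon^{-2}\log^2(2/\epsilon\eta)\log(2/\eta)\log(2K) + \log(1/\sigma)$, and with your choice $L = A+A$ the parameter $\eta = \abs{A}/\abs{A+A}$ can be as small as $\alpha/6^d$, making $\log(2/\eta) \gg d$ and hence $d' \gg d^2$. That enters the \emph{rank}, not the radius, and destroys the iteration entirely. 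Similarly, $\eta^{1/2} \approx \alpha^{1/2}/6^{d/2}$ ruins the radius, and your ambient set $R \supset 3A - A - B'$ must contain $B_5$, so $\abs{R \setminus (A+A)}$ is up to $6^{Cd}\abs{B}$; the final pigeonholing then gives $\norm{1_A*\mu_{B'}}_\infty \gg \alpha/6^{Cd}$, which is not a density increment. None of these three failures can be repaired by adjusting the radius exponent.

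The paper's argument sidesteps all of this by a different choice of $L$ and a preliminary two-scale localisation. First, Lemma \ref{lemma:TwoScales} (Bourgain's trick) is applied to find a translate on which $A$ is dense on \emph{two} nested narrower Bohr sets $B' := B_\delta$ and $B'' := B'_{\delta'}$; writing $A' := (A-x)\cap B'$ and $A'' := (A-x)\cap B''$, the case split is on $\abs{A'+A''}$ versus $\abs{A'}/2\alpha'$ (so the structured summand $A''$ lives at the inner scale). Crucially, in the large-sumset case (Lemma \ref{lemma:large_sumset}) one takes $L := B_{1+3\delta}\setminus(A+A')$ rather than $L := A+A'$: since $A+A' \subset B_{1+\delta}$ by construction and $\abs{B_{1+3\delta}} \leq 1.01\abs{B}$ by regularity, the size of $L$ is controlled by $\abs{B}$, and the hypothesis $\abs{A+A'} \geq \abs{A}/2\alpha$ forces $\abs{L} \leq 0.51\abs{A}/\alpha$, so $\eta = \abs{A}/\abs{L} \geq \alpha$. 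The structured set for almost-periodicity is $-3\cdot A'$ with $S = 3\cdot B'_\nu$, giving $K \leq 2/\alpha$ by regularity. The bound $1_{-3\cdot A'}*1_A*1_{B_{1+3\delta}}(0) = \abs{A'}\abs{A}$ (because $3a'-a$ always lies in $B_{1+3\delta}$) replaces your unbounded ambient-set identity, and the complementary convolution $1_{-3\cdot A'}*1_A*1_L*\mu_T(0) \geq \tfrac{19}{20}\abs{A'}\abs{A}$ plus the pigeonhole $1_A*\mu_T(x) \geq \tfrac{19}{20}\abs{A}/\abs{L}$ yields the increment with all parameters free of $6^d$-type losses. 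You would need to import both the two-scale localisation and this complementary choice of $L$ to make the argument close.
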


As in the model case, we prove this differently in two cases depending on whether a particular sumset is large or not. In each case we make the further assumption that our given set $A$ is dense also in a narrower sub-Bohr set.

\subsection*{The large sumset, solution-free case}

\begin{lemma}\label{lemma:large_sumset}
	Let $G$ be a finite abelian group of order not divisible by $3$, let $B \subset G$ be a regular Bohr set of rank $d$ and radius $\rho$, and let $A \subset B$ have relative density $\mu_B(A) \geq \alpha$. Let $B' := B_\delta$ be a regular sub-Bohr set with $\delta := 1/Cd$ such that $\abs{B_{1+3\delta}} \leq 1.01 \abs{B}$, and assume that $A' := A \cap B'$ satisfies $\mu_{B'}(A') \geq \alpha$ and $\abs{A+A'} \geq \abs{A}/2\alpha$. If $\abs{A} \geq C$ and $A$ does not contain any non-trivial solutions to $x+y+z=3w$, then $\norm{1_A*\mu_{T}}_\infty \geq 1.8 \alpha$ for some Bohr set $T \leq B$ of rank at most $d + d'$ and radius at least $\rho \alpha^{1/2}/d^4 d'$, where $d' \ll \log^4(2/\alpha)$.
\end{lemma}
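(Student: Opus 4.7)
The plan mirrors Proposition \ref{prop:noSolsLargeSumsetImpliesInc}, built around the three-fold convolution $1_A * 1_{-3A'} * 1_{A+A'}$. First I would observe that the solution-free hypothesis forces
\[ 1_A * 1_{-3A'} * 1_{A+A'}(0) = \abs{A'}, \]
since this convolution counts pairs $(a_1, w) \in A \times A'$ for which there exist $a_2 \in A$, $a_3 \in A'$ with $a_1 + a_2 + a_3 = 3w$; using $A' \subset A$, any such $(a_1, a_2, a_3, w)$ is a solution to $x+y+z = 3w$ in $A$ and is therefore trivial, leaving exactly one pair per $w \in A'$.

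Next I would apply Theorem \ref{thm:Linfty-ap_Bohr} with our $A$ in the role of its ``$A$'', and $M := -3A'$, $L := A+A'$, $S := A'$, using $B'$ as the reference regular Bohr set. Regularity of $B$ gives $\abs{A+A'} \leq \abs{B_{1+\delta}} \leq 2\abs{B} \leq (2/\alpha)\abs{A}$, hence $K \leq 2/\alpha$; the hypothesis $\mu_{B'}(A') \geq \alpha$ provides $\sigma \geq \alpha$; and $\eta := \abs{M}/\abs{L} \leq 1$. Taking $\epsilon$ to be a suitably small absolute constant, the theorem yields a regular Bohr set $T \leq B$ of the claimed rank and radius with
\[ \bigl|\, (1_A * 1_{-3A'} * 1_{A+A'}) * \mu_T(0) - \abs{A'} \,\bigr| \leq \epsilon \abs{A}\abs{A'}. \]

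To extract the density increment, I would use $1_A * 1_{-3A'} * 1_G(0) = \abs{A}\abs{A'}$ and subtract the above display from this identity to get
\[ (1_A * 1_{-3A'} * 1_{(A+A')^c}) * \mu_T(0) \geq (1-\epsilon)\abs{A}\abs{A'} - \abs{A'}. \]
The key geometric point is that $1_A * 1_{-3A'}$ is supported on $A - 3A' \subset B_{1+3\delta}$, so after convolution with $\mu_T$ only the portion $F$ of $(A+A')^c$ lying in a small thickening of $B_{1+3\delta}$ contributes on the left. Regularity together with the hypothesis $\abs{B_{1+3\delta}} \leq 1.01\abs{B}$ and the large-sumset estimate $\abs{A+A'} \geq \abs{A}/(2\alpha) \geq \abs{B}/2$ then give $\abs{F} \leq 0.52\abs{B}$. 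The standard rearrangement
\[ (1_A * 1_{-3A'} * 1_F * \mu_T)(0) = \sum_z (1_A * \mu_T)(z) \cdot (1_{-3A'} * 1_F)(-z) \leq \norm{1_A * \mu_T}_\infty \cdot \abs{A'} \cdot \abs{F} \]
then yields $\norm{1_A * \mu_T}_\infty \geq \bigl((1-\epsilon)\abs{A} - 1\bigr) / \abs{F} \geq 1.8\alpha$ for $\abs{A} \geq C$ large enough and $\epsilon$ small enough.

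The main technical hurdle I expect is verifying that Theorem \ref{thm:Linfty-ap_Bohr} delivers the claimed parameter bounds on $T$. Concretely, $\log(2/\eta)$ must stay $O(\log(2/\alpha))$ despite the fact that $\eta = \abs{A'}/\abs{A+A'}$ is only $\gtrsim \alpha \abs{B_\delta}/\abs{B}$ a priori, and the $\eta^{1/2}$ factor in the theorem's radius must descend to $\alpha^{1/2}$. I expect this to require running the theorem from the sub-Bohr set $B'$ rather than $B$ (to keep $\sigma \geq \alpha$) and combining the large-sumset hypothesis with Bohr-set decay estimates to pick up the extra factors of $1/d$ needed to meet the stated radius $\rho\alpha^{1/2}/(d^4 d')$.
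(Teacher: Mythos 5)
The central gap is in the application of Theorem \ref{thm:Linfty-ap_Bohr}. You apply it with $M := -3A'$ and $L := A+A'$, giving $\eta = \abs{A'}/\abs{A+A'}$. This is far too small: $\abs{A'}$ is at most $\abs{B'} = \abs{B_\delta}$, which by the decay estimate of Lemma \ref{lemma:BohrGrowth} satisfies $\abs{B_\delta} \geq (\delta/2)^{3d}\abs{B}$ but can genuinely be that small, while $\abs{A+A'}$ is comparable to $\abs{B}$. Thus $\log(2/\eta)$ can be of order $d\log d + \log(2/\alpha)$, not $O(\log(2/\alpha))$. Since the theorem's rank bound is $d' \ll \epsilon^{-2}\log^2(2/\epsilon\eta)\log(2/\eta)\log(2K) + \log(1/\sigma)$, this forces $d' \gg d^3(\log d)^3$, which destroys the iteration: the Bohr set rank would grow polynomially at each step rather than by $O(\log^4(2/\alpha))$. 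You flag this as a ``technical hurdle'' at the end, but the fixes you propose do not address it. Running the theorem from $B'$ keeps $\sigma \geq \alpha$ but does nothing to $\eta$; the large-sumset hypothesis is a \emph{lower} bound on $\abs{A+A'}$, which only makes $\eta$ smaller; and Bohr-set decay estimates give an exponentially-in-$d$ lower bound on $\abs{B_\delta}/\abs{B}$, which cannot rescue a polylogarithmic rank.

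The paper's way around this is to reassign the roles. It takes $M := A$ (a set of size $\geq \alpha\abs{B}$, so ``comparable to $\abs{B}$'') and $L := B_{1+3\delta} \setminus (A+A')$ rather than $A+A'$ itself; the structured set is $-3\cdot A'$ with $S := 3 \cdot B'_\nu$. The point of the complementation \emph{inside} $B_{1+3\delta}$ is exactly to exploit the large-sumset hypothesis: since $\abs{A+A'} \geq \abs{A}/2\alpha$ and $\abs{B_{1+3\delta}} \leq 1.01\abs{B} \leq 1.01\abs{A}/\alpha$, one gets $\abs{L} \leq 0.501\abs{A}/\alpha$ and hence $\eta = \abs{A}/\abs{L} \geq \alpha$, which is what feeds the $\log^4(2/\alpha)$ rank bound. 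The identity $1_{-3\cdot A'}*1_A*1_{A+A'}(0) = \abs{A'}$ (your opening observation, which is correct) then gives $1_{-3\cdot A'}*1_A*1_L(0) = \abs{A'}\abs{A} - \abs{A'}$, and a single pigeonhole finishes. Interestingly, your later instinct --- that only the portion of $(A+A')^c$ near $B_{1+3\delta}$ contributes --- is essentially the right geometric idea, but it must be built into the choice of $L$ \emph{before} invoking Theorem \ref{thm:Linfty-ap_Bohr}, since $\eta$ is determined at the moment the theorem is applied; restricting after the fact cannot repair the rank bound.
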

\begin{proof}
Define $S := 3\cdot B'_{\nu}$ where $\nu := 1/Cd$, so that (using the assumption on $\abs{G}$) $S$ is a Bohr set of rank $d$ and radius at least $\rho/Cd^2$, and note that, by regularity,
	\begin{equation}
		\abs{3\cdot A' + S} \leq \abs{B'_{(1 + \nu)}} \leq 2 \abs{B'} \leq \tfrac{2}{\alpha} \abs{3 \cdot A'}. \label{eqn:A'struct}
	\end{equation}
	Apply Theorem \ref{thm:Linfty-ap_Bohr} with $-3\cdot A'$ in place of $A$, $S$ as defined above, $M := A$, $L := B_{1+3\delta} \setminus (A+ A')$ and $\epsilon := \tfrac{1}{40}$. Our assumption $\abs{A+A'} \geq \abs{A}/2\alpha$ implies that 
\begin{equation}
	\abs{L} \leq 1.01 \abs{B} - \tfrac{1}{2\alpha} \abs{A} \leq \tfrac{0.501}{\alpha}\abs{A}, \label{eqn:Lbound}
\end{equation}
and so the parameter $\eta$ of that theorem is certainly at least $\alpha$. We may further take $K = 2/\alpha$ by \eqref{eqn:A'struct}, and so we get a Bohr set $T \leq S$ of rank at most $d + d'$ and radius at least $\rho \alpha^{1/2}/d^4 d'$, where $d' \leq C \log^4(2/\alpha)$, such that
	\[ \norm{ 1_{-3\cdot A'}*1_A*1_L*\mu_T - 1_{-3\cdot A'}*1_A*1_L }_\infty \leq \tfrac{1}{40} \abs{A'} \abs{A}. \]
	Now, since $A$ does not contain any non-trivial solutions to $x+y+z=3w$, we have
	\[ 1_{-3\cdot A'}*1_A*1_{A+A'}(0) = \abs{A'}. \]
	Thus
	\begin{align*}
		1_{-3\cdot A'}*1_A*1_L*\mu_T(0) &\geq 1_{-3\cdot A'}*1_A*(1_{B_{1+3\delta}} - 1_{A+A'})(0) - \tfrac{1}{40} \abs{A'} \abs{A} \\
		&= \tfrac{39}{40}\abs{A'}\abs{A} - \abs{A'} \\
		&\geq \tfrac{19}{20} \abs{A'}\abs{A}, 
	\end{align*}
	provided $\abs{A} \geq 40$. By the pigeonhole principle, then, there must be some element $x$ for which
	\[ 1_A*\mu_T(x) \geq \tfrac{19}{20} \abs{A}/\abs{L} \geq 1.8 \alpha, \]
	by \eqref{eqn:Lbound}.
\end{proof}

\subsection*{The small sumset case}

Again, the case in which $A+A'$ is small can be handled a slightly simpler fashion.

\begin{lemma}\label{lemma:small_sumset}
	Let $A \subset G$, let $B \subset G$ be a regular Bohr set of rank $d$ and radius $\rho$, and let $A' \subset B$ have relative density $\mu_B(A') \geq \alpha$. If $\abs{A+A'} \leq \abs{A}/2\alpha$, then $\norm{1_A*\mu_{T}}_\infty \geq 1.8 \alpha$ for some Bohr set $T \leq B$ of rank at most $d + d'$ and radius at least $\rho \alpha^{1/2}/d^3 d'$, where $d' \ll \log^4(2/\alpha)$.
\end{lemma}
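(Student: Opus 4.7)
The approach mirrors that of Proposition \ref{prop:sumsetImpliesInc} in the finite field setting, with Theorem \ref{thm:Linfty-ap_Bohr} replacing Theorem \ref{thm:Linfty-ap_FF}. The driving observation is that
\[
    1_{A'}*1_A*1_{-(A+A')}(0) = \abs{A'}\abs{A},
\]
since $1_{A'}*1_A$ is supported in $A'+A \subset A+A'$. If one can show this three-fold convolution is approximately preserved under further convolution with $\mu_T$ for a suitably small sub-Bohr set $T \leq B$, then a trivial pointwise bound will force $\norm{1_A*\mu_T}_\infty$ to be at least of order $\abs{A}/\abs{A+A'}$, which is $\geq 2\alpha$ by hypothesis.

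To set up the almost-periodicity input, first take $S := B_\delta$ with $\delta := 1/Cd$. Regularity of $B$ gives $\abs{A'+S} \leq \abs{B_{1+\delta}} \leq 2\abs{B} \leq (2/\alpha)\abs{A'}$, so $A'$ is structured relative to $S$ with constant $K = 2/\alpha$; Lemma \ref{lemma:BohrGrowth} also provides $\mu_B(S) \geq (1/2Cd)^{3d}$. Next, apply Theorem \ref{thm:Linfty-ap_Bohr} with first argument $A'$, and with $M := A$, $L := -(A+A')$, and $\epsilon := 1/10$. Note that $\eta = \abs{A}/\abs{A+A'}$ lies in $[2\alpha, 1]$ by hypothesis (the upper bound holding since $\abs{A+A'} \geq \abs{A}$). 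This yields a regular Bohr set $T \leq B$ of rank at most $d+d'$ and radius at least $c\rho\alpha^{1/2}/d^3 d'$, where $d' \ll \log^4(2/\alpha)$, such that
\[
    \abs{1_{A'}*1_A*1_{-(A+A')}*\mu_T(0) - \abs{A'}\abs{A}} \leq \tfrac{1}{10}\abs{A'}\abs{A}.
\]

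To close, bound the convolution at $0$ pointwise:
\[
    1_{A'}*1_A*1_{-(A+A')}*\mu_T(0) = \sum_{x \in A+A'} (1_{A'}*1_A*\mu_T)(x) \leq \abs{A+A'} \cdot \abs{A'} \cdot \norm{1_A*\mu_T}_\infty.
\]
Combined with the lower bound $(9/10)\abs{A'}\abs{A}$ from the previous step, this gives
\[
    \norm{1_A*\mu_T}_\infty \geq \tfrac{9}{10} \cdot \abs{A}/\abs{A+A'} \geq \tfrac{9}{10} \cdot 2\alpha = 1.8\alpha,
\]
as required.

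The main obstacle is really just bookkeeping rather than any genuine new difficulty: one must verify that the parameters $K \sim 1/\alpha$, $\sigma \gtrsim (1/Cd)^{3d}$, and $\eta \gtrsim \alpha$ translate into the advertised rank and radius bounds on $T$, with the $\log(1/\sigma) = O(d\log d)$ contribution being absorbed into $\log^4(2/\alpha)$ exactly as in the companion Lemma \ref{lemma:large_sumset}. Unlike the large sumset case, no disjointness between $A$ and $3\cdot A_2 - A_1 - A_1$ is needed here, since the structure of $A+A'$ itself (via the supported indicator $1_{A'}*1_A$) already gives an exact value for the three-fold convolution at $0$.
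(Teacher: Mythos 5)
Your high-level strategy is exactly the paper's: the identity $1_{A'}*1_A*1_{-(A+A')}(0) = \abs{A'}\abs{A}$, the application of Theorem~\ref{thm:Linfty-ap_Bohr} with $A'$, $M:=A$, $L:=-(A+A')$, $\epsilon = 1/10$, $K = 2/\alpha$, $\eta \geq 2\alpha$, and the closing pigeonhole. But there is a genuine gap in the parameter bookkeeping, which you flag and then dismiss.

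You apply Theorem~\ref{thm:Linfty-ap_Bohr} taking the theorem's ambient Bohr set to be the lemma's $B$ and the theorem's $S$ to be $B_\delta$, so that $\sigma = \mu_B(B_\delta) \gtrsim (1/Cd)^{3d}$ and hence $\log(1/\sigma) = O(d\log d)$. You claim this term is ``absorbed into $\log^4(2/\alpha)$ exactly as in the companion Lemma~\ref{lemma:large_sumset}'' --- but it is not absorbed there either, and in fact cannot be: nothing in the hypotheses of the lemma relates $d$ to $\alpha$, so for large $d$ the term $d\log d$ dominates $\log^4(2/\alpha)$. Moreover this is not a harmless slack. Inside the iteration of Proposition~\ref{prop:dens_inc}, a rank increment of $d' \ll \log^4(2/\alpha) + d\log d$ turns the recursion $d_{j+1} \leq d_j + d'$ from linear growth $d_j \ll j\log^4(2/\alpha)$ into multiplicative growth $d_{j+1} \gtrsim d_j\log d_j$, which over $j \leq C\log(2/\alpha)$ steps blows the rank up super-polynomially in $\log(1/\alpha)$ and destroys the $\exp(-c(\log N)^{1/7})$ bound.

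The correct reading --- and what the paper does in both this lemma and Lemma~\ref{lemma:large_sumset} --- is to let $B_\nu$ itself (with $\nu$ chosen via Lemma~\ref{lemma:BohrReg} so that $B_\nu$ is regular) play the role of \emph{both} the theorem's ambient Bohr set \emph{and} the theorem's $S$. Then $\sigma = 1$ and the $\log(1/\sigma)$ contribution vanishes entirely, while regularity of the outer $B$ still gives $\abs{A' + B_\nu} \leq \abs{B_{1+\nu}} \leq 2\abs{B} \leq (2/\alpha)\abs{A'}$, so $K = 2/\alpha$ survives. This also explains two features of the stated lemma that your version does not account for: the conclusion has $T \leq S$ (not merely $T \leq B$), and the radius bound has a $1/d^3$ (rather than the $1/d^2$ coming directly from Theorem~\ref{thm:Linfty-ap_Bohr}), the extra $1/d$ being the loss from shrinking the ambient Bohr set to $B_\nu$.
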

\begin{proof}
	Let $S = B_\nu$ where $\nu := 1/Cd$, so that
	\[ \abs{A' + S} \leq \abs{B_{1+\nu}} \leq \tfrac{2}{\alpha} \abs{A'}. \]
	Applying Theorem \ref{thm:Linfty-ap_Bohr} with $A'$ in place of $A$, this set $S$, $M := A$, $L := -A-A'$ and $\epsilon := \tfrac{1}{10}$, we may take $\eta \geq 2\alpha$ and $K := 2/\alpha$ to get a Bohr set $T \leq S$ of rank at most $d + d'$ and radius at least $\rho \alpha^{1/2}/d^3 d'$ where $d' \leq C\log^4(2/\alpha)$ such that
	\[ \norm{ 1_{A'}*1_A*1_{-A-A'}*\mu_T - 1_{A'}*1_A*1_{-A-A'} }_\infty \leq \tfrac{1}{10} \abs{A'} \abs{A}. \]
	Now, $1_A*1_{-A-A'}(x) = \abs{A}$ for any $x \in -A'$, and so $1_{A'}*1_A*1_{-A-A'}(0) = \abs{A'} \abs{A}$. Thus
	\begin{align*}
		1_{A'}*1_A*1_{-A-A'}*\mu_T(0) \geq \tfrac{9}{10}\abs{A'}\abs{A}.
	\end{align*}
	Pigeonholing and using the assumption on $\abs{A+A'}$, there is thus some $x$ for which
	\[ 1_A*\mu_T(x) \geq \tfrac{9}{10} \abs{A}/\abs{A+A'} \geq 1.8 \alpha. \qedhere \]
\end{proof}

\subsection*{Rescaling and putting the cases together}

We need one final tool in order to put the previous two lemmas together to prove Proposition \ref{prop:dens_inc}, namely a simple averaging argument due to Bourgain \cite{bourgain:roth} that in practice allows us to assume that a dense subset $A$ of a Bohr set $B$ is also large on a sub-Bohr set $B_\delta$ for some not-too-small $\delta$.

\begin{lemma}\label{lemma:TwoScales}
	Let $B$ be a regular Bohr set of rank $d$, let $A \subset B$ have relative density $\alpha$, and let $B', B'' \subset B_\delta$ where $\delta \leq \alpha/Cd$. Then either 
	\begin{enumerate}
		\item there is an $x \in B$ such that $1_A*\mu_{B'}(x) \geq \tfrac{7}{10}\alpha$ and $1_A*\mu_{B''}(x) \geq \tfrac{7}{10}\alpha$, or
		\item $\norm{1_A*\mu_{B'}}_\infty \geq \tfrac{5}{4}\alpha$ or $\norm{1_A*\mu_{B''}}_\infty \geq \tfrac{5}{4}\alpha$.
	\end{enumerate}
\end{lemma}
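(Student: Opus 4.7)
The plan is to run a standard two-scale averaging argument of Bourgain. Suppose for contradiction that both conclusions (i) and (ii) fail, so that the functions $f := 1_A*\mu_{B'}$ and $g := 1_A*\mu_{B''}$ satisfy $f(x), g(x) < \tfrac{5}{4}\alpha$ for every $x$, and no single point $x \in B$ has both $f(x) \geq \tfrac{7}{10}\alpha$ and $g(x) \geq \tfrac{7}{10}\alpha$.

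Step 1 is to show that, with respect to the normalised measure $\mu_B$, the functions $f$ and $g$ have averages very close to $\alpha$. Using the symmetry of Bohr sets (so $\check{\mu}_{B'} = \mu_{B'}$) and swapping the order of summation, one rewrites
\[ \sum_x f(x)\mu_B(x) = \sum_y 1_A(y)\,\mu_B*\mu_{B'}(y). \]
Since $B' \subset B_\delta$ with $\delta \leq \alpha/Cd$, Lemma \ref{lemma:regConv} gives $\norm{\mu_B*\mu_{B'} - \mu_B}_{L^1} \leq \epsilon$ provided $C$ is chosen large enough (concretely $C = 24/\epsilon \cdot (\text{const})$). Choosing $\epsilon = \alpha/80$ (so $C \geq 1920$ suffices) and using $\sum_y 1_A(y)\mu_B(y) = \alpha$, we get $\sum_x f(x)\mu_B(x) \geq \alpha - \alpha/80 = 79\alpha/80$, and likewise for $g$.

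Step 2 is to derive a contradiction from the contrapositive assumption. Let $S_f = \{x \in B : f(x) < \tfrac{7}{10}\alpha\}$ and $S_g = \{x \in B : g(x) < \tfrac{7}{10}\alpha\}$, and set $s := \mu_B(S_f)$, $t := \mu_B(S_g)$. Our contrapositive assumption says $S_f \cup S_g = B$, hence $s + t \geq 1$. On the other hand, bounding $f$ by $\tfrac{7}{10}\alpha$ on $S_f$ and by $\tfrac{5}{4}\alpha$ elsewhere, we find
\[ \tfrac{79}{80}\alpha \leq \sum_x f(x)\mu_B(x) \leq \tfrac{7}{10}\alpha\, s + \tfrac{5}{4}\alpha (1-s), \]
which rearranges to $s \leq 21/44$. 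The same estimate gives $t \leq 21/44$, whence $s + t \leq 42/44 < 1$, a contradiction.

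There is really no serious obstacle here; the only point to keep an eye on is the interplay between the $\epsilon$ coming from Lemma \ref{lemma:regConv} and the density thresholds $\tfrac{7}{10}\alpha$ and $\tfrac{5}{4}\alpha$, which must be chosen so that the averaged lower bound strictly exceeds what the two thresholds allow when $s + t \geq 1$. Any $\epsilon$ small enough compared with $\alpha$ works (a factor of $80$ suffices in the constant $C$), so the hypothesis $\delta \leq \alpha/Cd$ is all that is needed.
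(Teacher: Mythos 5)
Your proof is correct and is essentially the same argument as the paper's: both apply Lemma \ref{lemma:regConv} to show that $\E_{x\in B}\,1_A*\mu_{B'}(x)$ (and likewise for $B''$) is within $O(\alpha)$ of $\alpha$, then pigeonhole. The paper does the pigeonhole by noting that $f+g$ has average at least $(2-\tfrac{1}{20})\alpha$ and then subtracting the $\tfrac{5}{4}\alpha$ cap; your version runs the same averaging through the level sets $S_f,S_g$ and a union bound, which is just a minor reorganisation of the same idea.
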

\begin{proof}
	Since $B$ is regular, picking the constant $C$ large enough yields
	\[ \abs{ 1_A*\mu_B*\mu_{B'}(0) - 1_A*\mu_B(0) } \leq \norm{\mu_B*\mu_{B'} - \mu_B}_1 \leq \tfrac{1}{40} \alpha \]
	by Lemma \ref{lemma:regConv}, and similarly for $B''$. Since $1_A*\mu_B(0) = \mu_B(A) = \alpha$, this implies that 
	\[ \E_{x \in B}\, \big(1_A*\mu_{B'}(x) + 1_A*\mu_{B''}(x) \big) \geq (2 - \tfrac{1}{20}) \alpha, \]
	and so there exists $x \in B$ such that $1_A*\mu_{B'}(x) + 1_A*\mu_{B''}(x) \geq (2 - \frac{1}{20}) \alpha$. Fix such an $x$. If we are not in the second case of the conclusion, we then have  
	\[ 1_A*\mu_{B'}(x) \geq (2- \tfrac{1}{20})\alpha - \tfrac{5}{4}\alpha = \tfrac{7}{10}\alpha, \]
	and similarly for $B''$, and so we are done. 
\end{proof}

\begin{proof}[Proof of Proposition \ref{prop:dens_inc}]
	We start by rescaling our Bohr set so that $A$ is large at two scales simultaneously: apply Lemma \ref{lemma:TwoScales} with $\delta := \alpha/Cd$ picked so that $B' := B_\delta$ is regular, and with $B'' := B'_{\delta'}$ where $\delta' := 1/Cd$ is picked so that this is regular and $\abs{B'_{1+3\delta'}} \leq 1.01\abs{B'}$. If we are in the second case of the conclusion of that lemma, then we have a density increment on a translate of a Bohr set of rank $d$ and radius at least $\alpha/Cd^2$, in which case we are done. So assume instead that we get an element $x \in B$ such that
	\[ 1_A*\mu_{B'}(x),\, 1_A*\mu_{B''}(x) \geq \tfrac{7}{10}\alpha, \]
	and let $A' := (A-x)\cap B'$, $A'' := (A-x)\cap B''$; these sets thus have relative densities at least $\alpha' := \tfrac{7}{10}\alpha$ in their respective Bohr sets. Note by translation invariance that $A'$ also does not contain any non-trivial solutions to our equation.

	Now, if $\abs{A'+A''} \leq \abs{A'}/2\alpha'$, then we apply Lemma \ref{lemma:small_sumset} with $(A',A'',B'')$ in place of $(A,A',B)$ to get that
	\[ \norm{1_A*\mu_T}_\infty \geq \norm{1_{A'}*\mu_T}_\infty \geq 1.8 \alpha' \geq \tfrac{5}{4} \alpha, \]
	where $T$ is a Bohr set of rank at most $d+d'$ and radius at least $\rho \alpha^{3/2}/d^5 d'$, with $d' \ll \log^4(2/\alpha)$, and so we are done.

	If, on the other hand, $\abs{A'+A''} \geq \abs{A'}/2\alpha'$, then we apply Lemma \ref{lemma:large_sumset} with $(A',A'',B')$ in place of $(A,A',B)$ to get precisely the same conclusion, provided that $\abs{A'} \geq C$. A quick computation using Lemma \ref{lemma:BohrGrowth} shows that this is ensured by our assumption that $\abs{B} \geq (Cd/\alpha)^{3d}$, and so we are done.
\end{proof}

\section{The iterative argument}\label{section:iteration}

We now iterate the density increment result of the preceding section to prove our theorem.

\begin{theorem}\label{thm:RothAbelianGroup}
	Let $G$ be a finite abelian group of order $N$ not divisible by $3$. If $A \subset G$ does not contain any non-trivial solutions to $x+y+z=3w$, then
	\[ \abs{A} \leq \frac{ N }{ \exp\left( c (\log N)^{1/7} \right) }. \]
\end{theorem}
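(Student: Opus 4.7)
The plan is to iterate Proposition~\ref{prop:dens_inc} starting from the trivial Bohr set $B_0 := G$, which I think of as $\Bohr(\{1\}, 2)$ of rank $d_0 = 1$ and radius $\rho_0 = 2$; this is trivially regular and contains all of $G$. Setting $A_0 := A$ and $\alpha_0 := \alpha$, at each step Proposition~\ref{prop:dens_inc} together with translation invariance produces a solution-free translate $A_{j+1}$ of relative density
\[
\alpha_{j+1} \geq \tfrac{5}{4}\alpha_j
\]
inside a regular Bohr set $B_{j+1}$ with
\[
d_{j+1} \leq d_j + C\log(2/\alpha_j)^4, \qquad \rho_{j+1} \geq \frac{\rho_j\, \alpha_j^{3/2}}{C\, d_j^5 \log(2/\alpha_j)^4}.
\]
Since densities cannot exceed $1$, the density recursion forces the iteration to terminate after at most $j_* \leq C\log(2/\alpha)$ steps.

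Suppose for contradiction that $\alpha > \exp(-c(\log N)^{1/7})$ for a sufficiently small absolute constant $c > 0$. The claim is that for this $c$, the size hypothesis $|B_j| \geq (Cd_j/\alpha_j)^{3d_j}$ of Proposition~\ref{prop:dens_inc} persists throughout, so the iteration can actually run all $j_*$ steps and yield the contradiction $\alpha_{j_*} > 1$. Since $\alpha_j \geq \alpha$ throughout, summing the rank recursion over $j \leq j_*$ gives
\[
d_j \leq C\log(2/\alpha)^5,
\]
and taking logarithms in the radius recursion (using $\log d_j = O(\log\log(2/\alpha))$) and summing yields
\[
\log(1/\rho_j) \leq C\log(2/\alpha)^2.
\]
By Lemma~\ref{lemma:BohrGrowth}, $|B_j| \geq (\rho_j/2\pi)^{d_j} N$, so the size hypothesis reduces to
\[
\log N \geq C d_j\bigl(\log(1/\rho_j) + \log(d_j/\alpha_j)\bigr),
\]
whose right-hand side is bounded by $C\log(2/\alpha)^7$. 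Under the standing assumption this is at most $C' c^7 \log N$, which is less than $\log N$ provided $c$ is small enough, completing the contradiction.

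The main obstacle will be the careful bookkeeping of the two interlocked recursions: one needs to verify that the rank grows by at most $O(\log(2/\alpha)^5)$ and that the radius contracts by a factor of at most $\exp(O(\log(2/\alpha)^2))$ in total, so that the size condition assembles into precisely the bound $\log(2/\alpha)^7 \lesssim \log N$, which is exactly what sets the exponent $1/7$. Along the way one must pass to a regular sub-Bohr set after each application using Lemma~\ref{lemma:BohrReg}, incurring only a constant-factor radius loss that does not disturb the estimates. Finally, if the iteration instead terminates early because the size hypothesis itself fails at some step, the same estimate immediately delivers $\log(2/\alpha)^7 \geq c\log N$, again giving the desired conclusion; so in either scenario we obtain $\abs{A} \leq N/\exp(c(\log N)^{1/7})$.
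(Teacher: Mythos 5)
Your proposal is correct and follows essentially the same iterative scheme as the paper: initialize with the trivial Bohr set, iterate Proposition~\ref{prop:dens_inc}, track the recursions $d_j = O(\log^5(2/\alpha))$ and $\log(1/\rho_j) = O(\log^2(2/\alpha))$, and combine with the size estimate from Lemma~\ref{lemma:BohrGrowth} to conclude that the iteration's stopping condition forces $\log N \ll \log^7(2/\alpha)$. The only cosmetic difference is that you phrase the endgame as a proof by contradiction while the paper reads the bound off directly from the termination condition; the bookkeeping is identical.
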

\begin{proof}
	Initialise $A_1 = A$, $B^{(1)} = \Bohr(\{1\},2) = G$, $d_1 = 1$, $\rho_1 = 2$ and $\alpha_1 = \alpha = \abs{A}/\abs{G}$.
	We run the following iterative scheme until the condition required for doing so fails.

	If $\abs{B^{(j)}} \geq (C d_j/\alpha_j)^{3d_j}$, then we apply Proposition \ref{prop:dens_inc} to our sets and parameters to produce a new Bohr set $B^{(j+1)} \leq B^{(j)}$ of rank $d_j$ and radius $\rho_j$ satisfying
\begin{gather*}
	d_{j+1} \leq d_j + C \log^4(2/\alpha_j) \leq C j \log^4(2/\alpha), \\
	\rho_{j+1} \geq \rho_j \alpha_j^{3/2}/C d_j^5 \log^4(2/\alpha)
\end{gather*}
	and a set $A_{j+1} = (A_j-x_j) \cap B^{(j+1)} \subset B^{(j+1)}$ (for some $x_j$) of relative density
	\[ \alpha_{j+1} \geq \tfrac{5}{4}\alpha_j \geq \left(\tfrac{5}{4}\right)^{j} \alpha. \]
	Note that $A_{j+1}$ has no non-trivial solutions to our equation by translation invariance.

	Since the density of a set can never increase beyond $1$, the growth of the $\alpha_j$ implies that we must no longer be able to iterate this process when $j = s$ for some $s \leq C \log(2/\alpha)$. Thus we must have $\abs{B^{(s)}} < (Cd_s/\alpha_s)^{3d_s}$. On the other hand, by Lemma \ref{lemma:BohrGrowth} we have $\abs{B^{(s)}} \geq (\rho_s/2\pi)^{d_s} \abs{G}$. Putting these together we certainly have
	\[ \abs{G} < \left( C d_s/\rho_s \alpha_s \right)^{3 d_s}. \]
	Now $d_s \leq C \log^5(2/\alpha)$, $\rho_s \geq (c \alpha)^{Cs}$ and $\alpha_s \geq \alpha$; putting these bounds in gives 
	\[ \abs{G} < \exp\left( C \log^7(2/\alpha) \right), \]
	which yields the bound of the theorem upon rearranging.
\end{proof}

\begin{remark}
	With minor modifications, one can of course also prove a version of this theorem with $A$ simply being dense in a Bohr set, rather than the full group; we omit the details.
\end{remark}

\section{Additive structures in sums of sparse sets}\label{section:3A}

We turn now to the questions of structures in sumsets, proving Theorems \ref{thm:3Aints} and \ref{thm:3Avector}. This will be somewhat easier work than in the previous few sections as the arguments are iteration-free and so do not require the machinery associated with regular Bohr sets. We do, however, require the analogue of Theorem \ref{thm:Lp} for arbitrary finite abelian groups, this being another specialisation of \cite[Theorem 7.4]{CLS}:

\begin{theorem}\label{thm:Lp-ap_Bohr}
Let $p \geq 2$ and $\epsilon \in (0,1)$. Let $G$ be a finite abelian group and let $A, L \subset G$ be sets with $\mu(A) \geq \alpha$. Then there is a Bohr set $T$ of rank at most
\[ d := C p \epsilon^{-2} \log(2/\epsilon \alpha)^2\log (2/\alpha) \]
and radius at least $\epsilon \alpha^{1/2}/d$ such that, for each $t \in T$,
\[ \norm{ 1_A*1_L(\cdot+t) - 1_A*1_L }_p \leq \epsilon \abs{A} \abs{L}^{1/p}. \]	
\end{theorem}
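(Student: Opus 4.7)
The plan is to follow the strategy of Theorem~\ref{thm:Linfty-ap_Bohr}, replacing the $L^\infty$-endgame by an appropriate $L^p$-estimate. First, I would invoke Theorem~\ref{thm:Lp-ap} with $S := G$ (so $K := 1/\alpha$ is a valid doubling constant), tolerance $\epsilon/C$, and multiplier $k := \ceiling{C\log(2/\epsilon\alpha)}$, to obtain a set $T_0 \subset G$ of density $\mu(T_0) \geq 0.99\, \alpha^{Cpk^2/\epsilon^2}$ such that $kT_0-kT_0$ is a set of $(\epsilon/C)$-almost-periods for $f := 1_A * 1_L$ in $L^p$. Next, Chang's theorem applied to $\mu_{T_0}$ gives $\Lambda \subset \Ghat$ with
\[ \abs{\Lambda} \leq C \log(1/\mu(T_0)) \leq Cp\epsilon^{-2}\log^2(2/\epsilon\alpha)\log(2/\alpha) =: d \]
such that $\Spec_{1/2}(\mu_{T_0}) \subset \langle\Lambda\rangle_{\{-1,0,1\}}$, and setting $T := \Bohr(\Lambda,\, c\epsilon\alpha^{1/2}/d)$ ensures $\abs{\gamma(t)-1} \leq c'\epsilon\alpha^{1/2}$ for every $t \in T$ and every $\gamma \in \Spec_{1/2}(\mu_{T_0})$. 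This $T$ is the Bohr set claimed by the theorem.

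To verify the almost-period property of $T$, write $m := \mu_{T_0}^{(k)} * \mu_{-T_0}^{(k)}$, a probability measure supported on $kT_0-kT_0$. Minkowski's inequality together with the almost-period property of $kT_0-kT_0$ gives $\norm{f - f*m}_p \leq (\epsilon/C)\abs{A}\abs{L}^{1/p}$, so for $t \in T$ the triangle inequality reduces matters to bounding $\norm{g}_p$, where $g := f*m(\cdot+t) - f*m$. The Fourier transform of $g$ is $\widehat{f}(\gamma)\,\abs{\widehat{\mu_{T_0}}(\gamma)}^{2k}\,(\gamma(t)-1)$, and the factor $\abs{\widehat{\mu_{T_0}}(\gamma)}^{2k}\abs{\gamma(t)-1}$ is at most $c'\epsilon\alpha^{1/2}$ pointwise: on $\Spec_{1/2}(\mu_{T_0})$ by the annihilation property of $T$, and off it because $\abs{\widehat{\mu_{T_0}}}^{2k} \leq 2^{-2k}$ and the choice of $k$ makes $2\cdot 2^{-2k} \leq c'\epsilon\alpha^{1/2}$. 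Hence $\abs{\widehat{g}} \leq c'\epsilon\alpha^{1/2}\abs{\widehat{f}}$ pointwise, and interpolating the Parseval bound $\norm{g}_2 \leq c'\epsilon\alpha^{1/2}\norm{f}_2 \leq c'\epsilon\alpha^{1/2}\abs{A}\abs{L}^{1/2}$ with the Fourier-inversion plus Cauchy--Schwarz bound $\norm{g}_\infty \leq c'\epsilon\alpha^{1/2}\abs{A}^{1/2}\abs{L}^{1/2}$ via $\norm{g}_p \leq \norm{g}_\infty^{1-2/p}\norm{g}_2^{2/p}$ yields $\norm{g}_p \leq c'\epsilon\alpha^{1/2}\abs{A}^{1/2+1/p}\abs{L}^{1/2}$. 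The ratio of this to the target $\epsilon\abs{A}\abs{L}^{1/p}$ is $\alpha^{1/2}(\abs{L}/\abs{A})^{1/2-1/p} \leq \alpha^{1/p} \leq 1$, since $\abs{L}/\abs{A} \leq 1/\alpha$; adjusting absolute constants then closes the bound.

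The main obstacle is this last $L^p$-conversion: unlike the $L^\infty$ case of Theorem~\ref{thm:Linfty-ap_Bohr}, where Fourier inversion and a single Cauchy--Schwarz suffice, for general $p \geq 2$ one must interpolate the $L^2$ and $L^\infty$ bounds and then exploit both the ratio $\abs{L}/\abs{A} \leq 1/\alpha$ and the $\alpha^{1/2}$ gain absorbed into the Bohr radius $\rho := c\epsilon\alpha^{1/2}/d$ to swallow a residual discrepancy of order $(\abs{L}/\abs{A})^{1/2-1/p}$. This interaction is precisely what forces the factor $\alpha^{1/2}$ in the radius bound stated in the theorem.
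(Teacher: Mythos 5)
Your proof is correct, and since the paper itself only cites \cite[Theorem 7.4]{CLS} for this statement rather than proving it, there is no in-paper argument to compare against; but your strategy mirrors the one the paper uses for Theorem~\ref{thm:Linfty-ap_Bohr}, suitably adapted from $L^\infty$ of three-fold convolutions to $L^p$ of two-fold ones, and this is essentially the Croot--Sisask/Croot--\L aba--Sisask route.

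A few observations for the record. Your choice of $m := \mu_{T_0}^{(k)}*\mu_{-T_0}^{(k)}$ is a mild variant of the paper's one-sided $\mu_X^{(k)}$ with $X = T-z$; the symmetric version has the small advantage that $\widehat{m}(\gamma) = \abs{\widehat{\mu_{T_0}}(\gamma)}^{2k}$ is nonnegative, which makes the pointwise Fourier estimate slightly cleaner, but the factor $2k$ (instead of $k$) in the multiplier only shifts constants. The key additional step that does not appear in the $L^\infty$ case --- and which you correctly identify as the main obstacle --- is the interpolation $\norm{g}_p \leq \norm{g}_\infty^{1-2/p}\norm{g}_2^{2/p}$, coupled with the Parseval and Cauchy--Schwarz endpoint bounds $\norm{g}_2 \leq c'\epsilon\alpha^{1/2}\abs{A}\abs{L}^{1/2}$ and $\norm{g}_\infty \leq c'\epsilon\alpha^{1/2}\abs{A}^{1/2}\abs{L}^{1/2}$, and then the observation that $\abs{L}/\abs{A} \leq 1/\alpha$ (since $L \subset G$ and $\mu(A) \geq \alpha$) lets the $\alpha^{1/2}$ built into the Bohr radius absorb the residual factor $(\abs{L}/\abs{A})^{1/2-1/p}$. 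This matches the intended bound exactly, up to absolute constants that can be swallowed into $d$. Minor bookkeeping points --- the tolerance $\epsilon/C$ in the application of Theorem~\ref{thm:Lp-ap} inflates the exponent $Cpk^2/\epsilon^2$ only by a constant, and the telescoping triangle inequality over $\langle \Lambda \rangle_{\{-1,0,1\}}$ gives $\abs{\gamma(t)-1} \leq \abs{\Lambda}\,\rho \leq c\epsilon\alpha^{1/2}$ as you claim --- are all fine.
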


Using this in place of Theorem \ref{thm:Lp}, the following can be proved in precisely the same way as Corollary \ref{cor:ABC}.

\begin{proposition}
Let $\eta \in (0,1)$ and let $A, B, C \subset G$ have densities $\alpha, \beta, \gamma$ respectively. Then there is a Bohr set $T \subset G$ of rank at most $d := C\log(2/\eta \beta) \log(2/\alpha\gamma)^3$ and radius at least $(\alpha\gamma)^{1/2}/d$, and an element $t \in G$, such that for any $V \subset T$ there is a set $X \subset B+t$ with $\abs{X} \geq 0.99 \abs{B}$ such that
%Let $\delta, \eta \in (0,1)$ and let $A, B, C \subset G$ have densities $\alpha, \beta, \gamma$ respectively. Then there is a Bohr set $T \subset G$ of rank at most $d := C\log(2/\eta \beta \delta) \log(2/\alpha\gamma)^3$ and radius at least $(\alpha\gamma)^{1/2}/d$, and an element $t \in G$, such that for any $V \subset T$ there is a set $X \subset B+t$ with $\abs{X} \geq (1-\delta) \abs{B}$ such that
	\[ \abs{(x+V) \cap (A+B+C)} \geq (1-\eta)\abs{V} \quad \text{for every $x \in X$.} \]
\end{proposition}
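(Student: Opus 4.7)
My proof will closely follow that of Corollary~\ref{cor:ABC}, simply substituting Theorem~\ref{thm:Lp-ap_Bohr} for its finite-field counterpart Theorem~\ref{thm:Lp}. I begin with the same pigeonhole trick on $A$ and $C$: since $\sum_{s \in G} 1_A * 1_C(s) = \abs{A}\abs{C}$, there exists some $t \in G$ for which $1_A * 1_C(t) \geq \alpha\gamma\abs{G}$, and I set $A' := A \cap (t-C)$. This gives a set of density at least $\alpha\gamma$ in $G$, satisfying the key inclusion $A' - A' \subset A + C - t$, so that $B + A' - A' \subset (A+B+C) - t$. This inclusion, combined with a translation by $t$, will convert a structural statement about $B + A' - A'$ into one about $A + B + C$.

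Next I prove an analogue of Proposition~\ref{prop:BAA} in the Bohr-set setting. I apply Theorem~\ref{thm:Lp-ap_Bohr} with $\epsilon = 1/2$, $p := \lceil C\log(2/\eta\beta)\rceil$, and the pair $(A', B - A')$, obtaining a Bohr set $T$ of rank at most
\[ d \leq Cp \log(2/\alpha\gamma)^3 \leq C\log(2/\eta\beta)\log(2/\alpha\gamma)^3 \]
and radius at least $(\alpha\gamma)^{1/2}/d$ (after absorbing the factor $1/2$ into $C$) with the property that, for each $t' \in T$,
\[ \norm{1_{A'} * 1_{B-A'}(\cdot + t') - 1_{A'} * 1_{B-A'}}_p \leq \tfrac{1}{2}\abs{A'}\abs{B-A'}^{1/p}. \]

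Given any $V \subset T$, I define $X_0 \subset B$ to consist of those $x \in B$ for which $\abs{(x+V) \cap (B + A' - A')} \geq (1-\eta)\abs{V}$; for $x \in B \setminus X_0$, the function $1_{A'} * 1_{B-A'}(x+\cdot)$ vanishes on more than $\eta\abs{V}$ elements of $V$. Using the fact that $1_{A'} * 1_{B-A'}(x) = \abs{A'}$ for every $x \in B$ (since $0 \in x - B$ for such $x$), summing over $t' \in V$ and invoking the $L^p$-bound yields
\[ \eta\abs{V}\abs{B\setminus X_0}\abs{A'}^p < \sum_{t' \in V}\norm{1_{A'}*1_{B-A'}(\cdot+t') - 1_{A'}*1_{B-A'}}_p^p \leq \tfrac{1}{2^p}\abs{A'}^p\abs{B-A'}\abs{V}, \]
hence $\abs{B \setminus X_0} < 2^{-p}\eta^{-1}\abs{G} \leq 0.01\abs{B}$ by the choice of $p$. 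Setting $X := X_0 + t$, with $t$ from the first step, then places $X$ inside $B+t$ with $\abs{X} \geq 0.99\abs{B}$, and the inclusion $B + A' - A' \subset (A+B+C) - t$ together with a shift by $t$ turns the defining bound on $X_0$ into the advertised bound on $A+B+C$.

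I anticipate no real obstacle: all the ingredients are quoted or already established, and the argument is essentially a line-by-line translation of Corollary~\ref{cor:ABC} into the Bohr-set language. The only bookkeeping points are (i) translating $X_0$ by $t$ so that $X$ lands in $B+t$ rather than $B$, and (ii) observing that the conclusion holds for \emph{every} $V \subset T$, not merely $V = T$, which is automatic because the $L^p$-almost-periodicity bound from Theorem~\ref{thm:Lp-ap_Bohr} holds pointwise in $t' \in T$.
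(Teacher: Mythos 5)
Your proof is correct and follows exactly the route the paper indicates: the paper itself states that this proposition "can be proved in precisely the same way as Corollary~\ref{cor:ABC}" using Theorem~\ref{thm:Lp-ap_Bohr} in place of Theorem~\ref{thm:Lp}, which is precisely what you do — the pigeonhole on $1_A*1_C$ to produce $A' = A \cap (t-C)$ of density at least $\alpha\gamma$, the application of the $L^p$-almost-periodicity result with $p \approx \log(2/\eta\beta)$ and $\epsilon = 1/2$, and the counting argument showing $\abs{B\setminus X_0} < 2^{-p}\eta^{-1}\abs{B-A'} \leq 0.01\abs{B}$. The bookkeeping on the rank and radius of $T$ and the observation that the almost-period bound holds for every $t' \in T$ (so for any $V \subset T$) are handled correctly.
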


Note that if $C = -A$ then we can reduce the radius to $\alpha^{1/2}/d$ and take $t = 0$.

\begin{proposition}\label{prop:B+A-A_general}
	Let $A, B, C$ be sets of densities $\alpha, \beta, \gamma$ respectively in a finite abelian group $G$, and let $p \geq 1$. Then there is a Bohr set $T \subset G$ of rank at most $d := C p \left(\log 2/\alpha\gamma\right)^3$ and radius at least $(\alpha\gamma)^{1/2}/d$ such that, for any subset $V \subset T$ of size at most $\beta \cdot 2^p$, there is a set $X \subset B$ of size $\abs{X} \geq 0.99 \abs{B}$ such that a translate of $X+V$ is contained in $A+B+C$. 
\end{proposition}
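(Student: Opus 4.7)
The approach is to imitate Proposition~\ref{prop:BAA} and Corollary~\ref{cor:ABC}, but to exploit the full $L^p$ strength of Theorem~\ref{thm:Lp-ap_Bohr} rather than merely pigeonholing on a single index. First I would use the averaging identity $\sum_t 1_A*1_C(t) = \abs{A}\abs{C}$ to pick an element $t \in G$ for which $A' := A \cap (t-C)$ has density at least $\alpha\gamma$ in $G$. Since $-A' \subset C - t$, one has the containment
\[ A' + B - A' + t \subset A + B + C, \]
so it suffices to produce $X \subset B$ with $\abs{X} \geq 0.99 \abs{B}$ such that $X + V \subset A' + B - A'$; then $(X+V)+t$ is the desired translate of $X+V$ inside $A+B+C$.

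Next I would apply Theorem~\ref{thm:Lp-ap_Bohr} to $A'$ and $L := B - A'$ with the parameter $p$ from the statement and a small absolute constant $\epsilon > 0$ (to be fixed). This yields a Bohr set $T$ of rank at most $Cp (\log(2/\alpha\gamma))^3$ and radius at least $(\alpha\gamma)^{1/2}/d$ on which the convolution $f := 1_{A'}*1_{B-A'}$ satisfies $\norm{f(\cdot+t') - f}_p \leq \epsilon \abs{A'} \abs{B - A'}^{1/p}$ for every $t' \in T$. The key pointwise observation is that $f(x) = \abs{A'}$ for each $x \in B$: every $a \in A'$ contributes via the decomposition $x = a + (x-a)$ with $x-a \in B - A'$. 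I would then define
\[ X := \{ x \in B : f(x+v) \geq 1 \text{ for every } v \in V \}, \]
so that automatically $X + V \subset A' + B - A'$.

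It remains to prove $\abs{X} \geq 0.99\abs{B}$. For $x \in B \setminus X$ there is some $v \in V$ with $f(x+v) = 0$, and hence $\abs{f(x+v) - f(x)}^p = \abs{A'}^p$; summing over $V$ and invoking the $L^p$ almost-periodicity bound,
\[ \abs{B \setminus X} \cdot \abs{A'}^p \leq \sum_{v \in V} \norm{f(\cdot+v) - f}_p^p \leq \abs{V}\,\epsilon^p\, \abs{A'}^p\, \abs{B-A'}. \]
Using $\abs{B-A'} \leq \abs{G} = \abs{B}/\beta$ gives $\abs{B \setminus X} \leq (\abs{V}/\beta)\epsilon^p \abs{B}$. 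The only real subtlety, and the place where care is needed, is the bookkeeping: one must choose the absolute constant $\epsilon$ small enough that $(2\epsilon)^p \leq 0.01$ holds for \emph{every} $p \geq 1$ (for instance $\epsilon = 1/200$), so that the hypothesis $\abs{V} \leq \beta \cdot 2^p$ forces $\abs{B \setminus X} \leq 0.01 \abs{B}$. Since $\epsilon$ is an absolute constant, its $\epsilon^{-2}$ in Theorem~\ref{thm:Lp-ap_Bohr} and the corresponding shrinkage of the radius are absorbed into the constant $C$ in the rank, preserving the bounds claimed.
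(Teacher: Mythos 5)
Your proof is correct and takes essentially the same approach as the paper: the paper deduces this from an unnamed intermediate proposition (an analogue of Proposition~\ref{prop:BAA} proved via Theorem~\ref{thm:Lp-ap_Bohr}) by taking $\eta = 1/(\beta \cdot 2^{p+1})$, and your argument simply inlines that chain of reductions and performs the same $L^p$ counting of $\abs{B\setminus X}$. The only cosmetic difference is that you fix a small absolute $\epsilon$ and feed the given $p$ directly into Theorem~\ref{thm:Lp-ap_Bohr}, whereas the paper fixes $\epsilon = 1/2$ and uses an internal exponent $p' = C\log(2/\eta\beta)$; note you should apply the theorem with $\max(p,2)$ since it requires $p \geq 2$.
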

\begin{proof}
	This follows immediately from the preceding proposition on taking $\eta = 1/(\beta\, 2^{p+1})$, so that $(1-\eta)\abs{V} > \abs{V} - 1$.
\end{proof}

One can also prove this directly from Theorem \ref{thm:Lp-ap_Bohr} following the proof of \cite[Theorem 1.4]{CLS} but taking into account the very large `higher energy' of $1_A*1_{B-A}$; this is, of course, very much related to the proof of Proposition \ref{prop:BAA}. 

We now have some easy corollaries. Theorem \ref{thm:3Aints} follows immediately from

\begin{theorem}\label{thm:ABCints}
	Let $A,B,C \subset [N]$ be sets of densities $\alpha, \beta, \gamma$. Then $A+B+C$ contains $X+P$ where $X \subset B$ has $\abs{X} \geq 0.99 \abs{B}$ and $P$ is an arithmetic progression of length at least
	\[ \exp\left( c \left(\frac{\log{N}}{\log^3(2/\alpha \gamma)} \right)^{1/2} - \log(1/\alpha \beta \gamma) \right). \]
\end{theorem}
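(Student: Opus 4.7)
The plan is to apply Proposition~\ref{prop:B+A-A_general} in the cyclic group $\Zmod{M}$, where $M$ is a prime with $3N \leq M < 6N$ (existing by Bertrand's postulate). Since $A,B,C \subset [N]$, any sum $a+b+c$ is unambiguous modulo $M$, and the densities of $A,B,C$ inside $\Zmod{M}$ are at least $\alpha/6$, $\beta/6$, $\gamma/6$, which affects only absolute constants. For a parameter $p \geq 1$ to be chosen, Proposition~\ref{prop:B+A-A_general} then furnishes a Bohr set $T \subset \Zmod{M}$ of rank $d \leq Cp\log^3(2/\alpha\gamma)$ and radius $\rho \geq (\alpha\gamma)^{1/2}/Cd$.

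Next, by Lemma~\ref{lemma:BohrAP}, $T$ contains an arithmetic progression $P_0$ of length at least $\rho M^{1/d}/2\pi \geq c(\alpha\gamma)^{1/2} N^{1/d}/d$. I then take $P \subset P_0$ to be a sub-progression of length $L := \min\bigl(\abs{P_0},\, \floor{c\beta \cdot 2^p}\bigr)$, so that $P$ is a subset of $T$ of size at most $\beta \cdot 2^p$. Proposition~\ref{prop:B+A-A_general} then produces a set $X \subset B$ with $\abs{X} \geq 0.99\abs{B}$ and an element $t \in \Zmod{M}$ with $X + t + P \subset A+B+C$; the translated progression $t+P$ is then the desired progression in the statement of the theorem.

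It remains to pick $p$. Writing $U := \log(2/\alpha\gamma)$ and $Y := \log N / U^3$, the length $L$ is the minimum of $c\beta \cdot 2^p$ (increasing in $p$) and, on inserting $d \leq CpU^3$, essentially $c(\alpha\gamma)^{1/2} \exp(Y/(Cp))/Cd$ (decreasing in $p$). Balancing logarithms suggests $p \sim \sqrt{Y}$, at which both bounds are of the form $\mathrm{poly}(\alpha,\beta,\gamma) \cdot \exp(\Omega(\sqrt{Y}))$. Taking logarithms and comparing with the claimed exponent $c\sqrt{Y} - \log(1/\alpha\beta\gamma)$, the polynomial prefactors $(\alpha\gamma)^{1/2}$, $1/d$, $\beta$ and rounding losses are all comfortably absorbed by the slack $\log(1/\alpha\beta\gamma)$, which makes the bound vacuous precisely when $\log(1/\alpha\beta\gamma) \gtrsim c\sqrt{Y}$.

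I do not expect a real analytic obstacle, since Proposition~\ref{prop:B+A-A_general} already packages the essential almost-periodicity content; the proof is essentially an optimization. The main subtleties are bookkeeping: passing cleanly from $[N]$ to $\Zmod{M}$, extracting a long AP from a Bohr set via Lemma~\ref{lemma:BohrAP}, and carrying the polylogarithmic $d$-dependence through the final optimization without spoiling the $\sqrt{\log N}$ scaling in the exponent.
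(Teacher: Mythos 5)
Your plan follows the paper's proof essentially step for step: embed $[N]$ into a cyclic group of prime order, invoke Proposition~\ref{prop:B+A-A_general} with a parameter $p$ roughly $\sqrt{\log N/\log^3(2/\alpha\gamma)}$, extract an arithmetic progression from the Bohr set $T$ via Lemma~\ref{lemma:BohrAP}, and balance the two competing length bounds $c(\alpha\gamma)^{1/2}N^{1/d}/d$ and $\beta\cdot 2^p$. The one technical slip is the choice of modulus: you take $M$ prime with $3N\leq M<6N$, but that is not quite enough room. The point of the standard embedding trick is not merely that sums $a+b+c\leq 3N$ are distinct residues mod $M$, but that the image of $A+B+C$ sits inside a half-period $[0,M/2)$ of $\Zmod M$, so that the arithmetic progression $x+t+P\subset A+B+C$ found mod $M$ pulls back to a genuine arithmetic progression of integers (consecutive differences are forced to equal the same representative of the common difference). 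That requires $M>6N$; the paper uses a prime $N'$ with $6N<N'<12N$ for exactly this reason. With $3N\leq M<6N$ the common difference could have both of its small representatives in $(-3N,3N)$ and the pulled-back set need not be a progression. This is a one-line fix (replace $3N\leq M<6N$ with $6N<M<12N$, which only changes absolute constants) and does not affect the rest of your argument, which is sound and matches the paper's.
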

\begin{proof}
By the standard trick of embedding $[N]$ into $\Zmod{N'}$ for $N'$ a prime between $6N$ and $12N$, it suffices to prove the statement with $[N]$ replaced by $\Zmod{N}$ for $N$ a prime, so we assume this setup instead.

Now apply Proposition \ref{prop:B+A-A_general} with $p = C \left(\frac{\log N}{\log^3(2/\alpha\gamma)} \right)^{1/2}$ to obtain a set $X \subset B$ and a Bohr set $T$ of rank $d \leq C p \log^3(2/\alpha\gamma)$ and radius at least $c \alpha\gamma/d$ satisfying that theorem's conclusion. By Lemma \ref{lemma:BohrAP}, $T$ contains an arithmetic progression of length at least $(c\alpha\gamma/d) N^{1/d}$. A quick calculation shows that the claimed arithmetic progression has length shorter than both this and $\beta\cdot 2^p$, whence we are done.
\end{proof}

Note that this result can be non-trivial for $\alpha$ and $\gamma$ as small as $\exp\left(-c (\log N)^{1/5} \right)$ and for $\beta$ even as small as $\exp\left(-c (\log N)^{1/2} \right)$. Also, since Bohr sets are extremely rich in additive structure, one can of course replace $P$ in the conclusion by other kinds of sets, such as generalised arithmetic progressions, which can then be much larger. Just measuring the length of a single progression, as we have done above, is nevertheless a simple and useful measure of the strength of the method.

In the finite field world we obtain the following generalisation of Theorem \ref{thm:3Avector}.

\begin{theorem}
	Let $A,B,C \subset \F_q^n$ be sets of densities $\alpha, \beta, \gamma$. Then $A+B+C$ contains $X+V$ where $X \subset B$ has $\abs{X} \geq 0.99\abs{B}$ and $V$ is an affine subspace of dimension at least 
	\[ \left( \frac{c n}{\log^3(2/\alpha \gamma)} - \log(1/\beta) \right)/\log q . \]
\end{theorem}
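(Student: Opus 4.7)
The strategy closely parallels the proof of Theorem \ref{thm:ABCints}, with subspaces in $\F_q^n$ playing the role that Bohr sets played there. The key tool is the finite-field analogue of Corollary \ref{cor:ABC}: tracking the densities of $A$ and $C$ separately in its proof (replacing the pigeonhole step $\mu(A \cap (t-C)) \geq \alpha^2$ with $\mu(A \cap (t-C)) \geq \alpha\gamma$) yields, for any $\eta \in (0,1)$, a subspace $V_0 \leq \F_q^n$ of codimension at most
\[ d \leq C \log(2/\eta\beta) \log^3(2/\alpha\gamma), \]
together with an element $t \in \F_q^n$ and a set $X \subset B+t$ of size at least $0.99\abs{B}$ satisfying
\[ \abs{(x+V_0) \cap (A+B+C)} \geq (1-\eta)\abs{V_0} \qquad \text{for every } x \in X. \]

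The next step is to upgrade this almost-containment into genuine containment. Since $\abs{V_0} = q^{n-d}$, choosing $\eta := \tfrac{1}{2}q^{-(n-d)}$ makes the right-hand side strictly exceed $\abs{V_0} - 1$, so by integrality each coset $x+V_0$ with $x \in X$ is fully contained in $A+B+C$. After translating $X$ back into $B$, this yields exactly the desired conclusion that a translate of $X+V_0$ lies inside $A+B+C$, with $X \subset B$ and $\abs{X} \geq 0.99\abs{B}$.

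Finally, I would solve the resulting implicit inequality for $\dim V_0 = n-d$. Writing $L := \log^3(2/\alpha\gamma)$, the choice of $\eta$ above turns the codimension bound into
\[ d \leq C L\big( (n-d)\log q + \log(1/\beta) + O(1)\big), \]
and rearranging gives
\[ n - d \geq \frac{n - CL\log(1/\beta) - O(L)}{1 + CL \log q} \geq \frac{cn/L - \log(1/\beta)}{\log q}, \]
which, in the nontrivial regime where this quantity is positive, is precisely the bound in the theorem. The main obstacle is not substantive: the only care needed is the (purely cosmetic) upgrade of Corollary \ref{cor:ABC} to a bound involving $\log(2/\alpha\gamma)$ rather than $\log(2/\alpha)$, and the minor bookkeeping of balancing $\eta$ against $\abs{V_0}$ in the final optimisation.
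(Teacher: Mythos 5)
Your approach is essentially the same as the paper's. The paper proves this theorem by applying Proposition \ref{prop:B+A-A_general} with $p = cn/\log^3(2/\alpha\gamma)$, and Proposition \ref{prop:B+A-A_general} is itself derived from the unnumbered proposition in Section \ref{section:3A} (which already tracks $\alpha$ and $\gamma$ separately, exactly as you propose for Corollary \ref{cor:ABC}) by choosing $\eta = 1/(\beta\, 2^{p+1})$ so that $(1-\eta)\abs{V} > \abs{V}-1$. Your $\eta$-parametrization and the paper's $p$-parametrization encode the same optimisation, and your solution of the implicit inequality recovers the stated bound up to constants.

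There is, however, one genuine though easily repaired gap. Corollary \ref{cor:ABC} produces a subspace $V_0$ of codimension \emph{at most} $d$, not exactly $d$, so in general $\abs{V_0} \geq q^{n-d}$ rather than $\abs{V_0} = q^{n-d}$, and your choice $\eta = \frac{1}{2}q^{-(n-d)}$ only guarantees $\eta\abs{V_0} \geq \frac{1}{2}$. If $V_0$ has codimension $d-1$ or smaller then $\eta\abs{V_0} \geq q/2 \geq 1$ and the integrality trick fails. You also cannot simply replace $V_0$ by a sub-subspace, because the set $X$ in Corollary \ref{cor:ABC} is tied to $V_0$ and the almost-containment bound $\abs{(x+V_0)\cap(A+B+C)} \geq (1-\eta)\abs{V_0}$ does not restrict to a sub-subspace. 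The paper avoids this by formulating the Section \ref{section:3A} intermediate proposition with the almost-containment holding for \emph{every} subset $V$ of the Bohr set $T$, with $X$ allowed to depend on $V$, which is precisely the flexibility needed. To patch your argument you should either re-examine the proof of Proposition \ref{prop:BAA} and note that it works verbatim with any sub-subspace $V' \subset V$ of the almost-period subspace (the $L^p$ bound on translates holds on all of $V$ and hence on $V'$, and the bound $\abs{B \setminus X'} \leq 0.01\abs{B}$ follows identically), or appeal directly to the ``for any $V \subset T$'' formulation.
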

\begin{proof}
	This follows just as before: applying Proposition \ref{prop:B+A-A_general} with $p = c n/\log^3(2/\alpha\gamma)$, we obtain a large set $X \subset B$ and a subspace $T \leq \F_q^n$ of dimension at least $n - C p \log^3(2/\alpha\gamma)$ such that $A+B+C$ contains a translate of $X+V$ for any subset $V \subset T$ of size less than $\beta \cdot 2^p$. Noting that this is less than $\abs{T}$ and letting $V$ be a subspace of $T$ of size between $\beta \cdot 2^p/q$ and $\beta \cdot 2^p$ then does the job.
\end{proof}

Note that if $q = 5$, say, this can be non-trivial for $\alpha, \gamma$ as small as $\exp\left(- c n^{1/3} \right)$ and for $\beta$ as small as $5^{-cn}$ -- in other words, $\abs{B}$ can be as small as a power of $\abs{G}$ in this setup. One can also reach such densities in the $[N]$ world; see the next section.

\begin{remark}
	In the case that at $A$ or $C$ has very large density, the above results follow from those known for two-fold sumsets, with $X$ being the whole of $B$ even. The point here is thus that one can deal with much sparser sets, and the cost is only that one gets slightly fewer translates of the structure in $A+B+C$.
\end{remark}

\section{Concluding remarks}\label{section:remarks}

\subsection*{Other equations}
We only dealt with the equation $x+y+z=3w$ in this paper, but it should be clear that one can deal with a general translation invariant equation $c_1 x+ c_2 y + c_3 z + c_4 w = 0$ in precisely the same way, at least in the finite field setting. In the more general setting one needs to make some small alterations related to the radii of the Bohr sets involved, but as in the former case the main difficulty is notational. A similar remark applies to equations in five variables, where precisely the same bounds hold.

\subsection*{Lower bounds in finite fields}
What is the largest size of a subset of $\F_5^n$ with no non-trivial solutions to $x+y+z=3w$? Just as for three-term progressions, we do not know of a Behrend-type example in this setting; indeed the best we know of comes from taking products of examples for small $n$, resulting in sets of size around $\theta^n$ for some $\theta < 5$.

\subsection*{Small doubling instead of density}
Clearly one could work with small-sumset conditions instead of density conditions in many of the proofs in this paper, but there is not much incentive to do so in view of the nature of the bounds and the presence of effective `modelling lemmas' in the settings of interest; see for example \cite[Section 6]{green-ruzsa}.

\subsection*{Lower densities for the $A+B+C$ problem in the integers}
Theorem \ref{thm:ABCints} found arithmetic progressions in $A+B+C$ where one of the sets could have density as low as $\exp\left(-c (\log N)^{1/2} \right)$. To reach even lower densities, one can use the argument underlying \cite[Theorem 1.9]{CS}, again adding the idea of exploiting the higher energy of $1_A*1_{B-A}$:

\begin{theorem}
	Let $A,B,C \subset [N]$ be sets of densities $\alpha, \beta, \gamma$. Then $A+B+C$ contains an arithmetic progression of length at least
\[ \exp\left( c \left(\frac{\log{N}}{\log(2/\alpha \gamma)} \right)^{1/4} - \log(1/\beta) \right). \]
\end{theorem}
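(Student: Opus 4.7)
After the usual embedding of $[N]$ into $\Zmod{N'}$ for a prime $N' \asymp N$, the strategy mirrors that of Theorem \ref{thm:ABCints}: produce a Bohr set $T \subseteq \Zmod{N'}$ of rank $d$ and radius $\rho \gg (\alpha\gamma)^{1/2}/d$, together with an almost-full subset $X \subseteq B$, such that a translate of $X+V$ lies in $A+B+C$ for every $V \subseteq T$ with $|V| \leq \beta \cdot 2^p$.  Taking $V$ to be an AP inside $T$ of maximal length (by Lemma \ref{lemma:BohrAP} at least $(2\pi)^{-1}\rho N'^{1/d}$) and balancing $\beta\cdot 2^p$ against this bound yields the AP length in $A+B+C$.

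The only change needed from the earlier proof is an upgrade of the rank bound in Proposition \ref{prop:B+A-A_general} from $d \ll p\log^3(2/\alpha\gamma)$ to $d \ll p^3\log(2/\alpha\gamma)$.  A short calculation shows that with this weaker dependence on $\log(2/\alpha\gamma)$, the optimal choice of $p$ becomes $(\log N/\log(2/\alpha\gamma))^{1/4}$ and the resulting AP length is of the claimed shape, after absorbing the $(\alpha\gamma)^{1/2}$ factor from $\rho$ into the constant $c$ (legitimate in the non-trivial density range $\log(1/\alpha\gamma) \ll (\log N)^{1/5}$, in which the $(\log N/\log(2/\alpha\gamma))^{1/4}$ main term dominates the losses).

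To obtain such a rank bound I would follow the iterative almost-periodicity scheme underlying \cite[Theorem 1.9]{CS}, applied to the higher-energy convolution $f := 1_A * 1_{B-A}$.  As in Proposition \ref{prop:BAA}, the key observation is that $f(b) = |A|$ for every $b \in B$, so $\|f\|_p^p \geq |A|^p|B|$ while $\|f\|_p^p \leq |A|^p|B-A|$; thus each round of $L^{p_i}$-almost-periodicity pays only a $(|B-A|/|B|)^{1/p_i}$ cost that is absorbed by taking $p_i \gg \log(1/\alpha)$.  One applies Theorem \ref{thm:Lp-ap_Bohr} successively with geometrically increasing exponents $p_1 < p_2 < \cdots < p_k$, at each round refining the ambient Bohr set to the set of almost-periods produced at the previous stage.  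Because the higher-energy input is reused at each round, each successive application contributes only $O(\log(2/\alpha\gamma))$ to the rank rather than the $O(p_i\log^3(2/\alpha\gamma))$ incurred by a fresh application, and after $k \asymp \log p$ rounds one reaches an effective $L^p$-almost-period Bohr set of rank $O(p^3\log(2/\alpha\gamma))$.  The averaging step that converts this into an $X \subseteq B$ with $X+V \subseteq A+B+C-t$ is then identical to that of Proposition \ref{prop:BAA}, combined with the $C = A\cap(t-C)$ trick of Corollary \ref{cor:ABC}.

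The main obstacle, as always in such nested iterations, is controlling the cumulative radius loss: each round shrinks the radius by a factor polynomial in the running rank, so the parameters $p_i$ and the number of rounds $k$ must be chosen so that the total shrinkage is only a polynomial in the final rank $d$, keeping $\rho \gg (\alpha\gamma)^{1/2}/d$ intact.  This bookkeeping parallels that of \cite{CS}, with the higher-energy function $1_A*1_{B-A}$ here playing the role that $1_A*1_{-A}$ plays there.
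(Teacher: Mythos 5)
Your overall plan matches the paper's (one-sentence) sketch: use the iterated almost-periodicity argument underlying \cite[Theorem 1.9]{CS}, combined with the higher-energy observation that $1_A * 1_{B-A}(b) = \abs{A}$ for $b \in B$, and then balance the $\beta \cdot 2^p$ constraint against the $\rho N^{1/d}$ bound from Lemma~\ref{lemma:BohrAP}. You also correctly identify the target rank bound $d \ll p^3\log(2/\alpha\gamma)$ and the optimal choice $p \asymp (\log N / \log(2/\alpha\gamma))^{1/4}$ as producing the claimed AP length. Up to here the proposal is on the right track.

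The genuine gap is in the rank bookkeeping for the iteration itself. You assert that ``each successive application contributes only $O(\log(2/\alpha\gamma))$ to the rank'' and that after $k \asymp \log p$ rounds one obtains a Bohr set of rank $O(p^3\log(2/\alpha\gamma))$. These two statements are not consistent with one another: $\log p$ rounds each adding $O(\log(2/\alpha\gamma))$ would give total rank $O(\log p \cdot \log(2/\alpha\gamma))$, which is vastly smaller than $p^3\log(2/\alpha\gamma)$ for the choice $p \asymp (\log N / \log(2/\alpha\gamma))^{1/4}$. If the rank were really of size $\log p \cdot \log(2/\alpha\gamma)$, the $N^{1/d}$ constraint would no longer be binding and the theorem's conclusion could be improved dramatically -- which is not what the paper claims, nor what is known. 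The per-round cost in the \cite{CS}-style iteration depends on the $L^{p_i}$ exponent used at that round (and, crucially, on the cumulative radius shrinkage, which is where the iteration actually bites), so the claim that the contribution is $p_i$-independent needs justification, and the claim that $\log p$ such rounds compound to a rank of order $p^3$ needs to be replaced by a real calculation tracking both rank and radius through the nested Bohr sets. As it stands, this middle step -- the heart of the argument -- is asserted rather than established, and the asserted arithmetic does not close.

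One minor additional point: you remark that absorbing the radius factor $(\alpha\gamma)^{1/2}$ into the constant $c$ is ``legitimate in the non-trivial density range $\log(1/\alpha\gamma) \ll (\log N)^{1/5}$''. That range is the one in which your derived rank bound $d \ll p^3\log(2/\alpha\gamma)$ suffices to beat the radius loss, but the paper's claimed theorem is meant to be non-trivial well beyond it (the paper notes it applies with $\alpha, \gamma$ as small as $N^{-c}$ and $\beta$ constant). In that wider range the $-\tfrac{1}{2}\log(1/\alpha\gamma)$ contribution from $\rho$ to $\log(\rho N^{1/d})$ dominates the main term $p$, so one would need either a stronger rank bound or a better radius through the iteration than the generic $(\alpha\gamma)^{1/2}/d$; your proposal does not address this.
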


This is worse than the bound in Theorem \ref{thm:ABCints} for $\alpha = \beta = \gamma$, but for certain density combinations it actually wins out. For example, it allows one to take $\alpha$ and $\gamma$ to be as small as $N^{-c}$ provided $\beta$ is a constant. (Note, however, that in this particular range one is guaranteed constant-length progressions already in $A+C$, as follows from \cite{CRS}.) An answer to the following question would thus be interesting.

\begin{question}
	Suppose $A,B \subset [N]$ have densities $N^{-c}$ and $C \subset [N]$ has density $\exp\left(-C (\log N)^{2/3} \right)$. Must $A+B+C$ contain an arithmetic progression of length tending to infinity with $N$?
\end{question}

\subsection*{Correlations for $2A$, $3A$ and $4A$}
Following on from the discussion of subspaces in sumsets in the introduction, let us offer this perhaps illustrative comparison of results on correlations of $2A$, $3A$ and $4A$ with subspaces, where $A \subset \F_q^n$ has density $\alpha$.

\begin{itemize}[leftmargin=2em]
	\item $2A$ contains $1-\epsilon$ of an affine subspace of codimension at most $C \epsilon^{-2-o(1)} \log(1/\alpha)^4$.
	\item $3A$ contains $1-\epsilon$ of an affine subspace of codimension at most $C \log(1/\epsilon \alpha) \log(1/\alpha)^3$.
	\item $4A$ contains all of an affine subspace of codimension at most $C \log(1/\alpha)^4$.
\end{itemize}

The first and last bullets follow from Sanders's work \cite{sanders:bogolyubov} (and directly from Theorem \ref{thm:Linfty-ap_FF}), and the middle one from Proposition \ref{prop:BAA}. (Note also that the last bullet follows from the other two by inclusion-exclusion.) These results focus on the small density case: when $\alpha$ is large some prior results can offer better bounds; for example, for $\F_2^n$ Sanders showed in \cite{sanders:structInSumsets} that $2A$ contains $1-\epsilon$ of an affine subspace of codimension at most $C \alpha^{-2} \log(1/\epsilon)$, and codimension at most $C \alpha^{-1} \log(1/\epsilon)$ in \cite{sanders:half}.

It is, however, far from clear where the truth lies for these results -- not only in terms of the exponents on the logarithms but also in the qualitative differences between $3A$ and $4A$. It may very well be that the result for $4A$ actually holds for $3A$, as would have been expected prior to \cite{sanders:bogolyubov}, and any proof of this is likely to be useful in proving Behrend-shape bounds for Roth's theorem itself. On the other hand, any demonstrations of a genuine difference between $3A$ and $4A$, or three and four-variable equations, say, would also be very interesting.

\end{document}